\def\norm#1{\|#1\|}
\newcommand{\EE}[1]{\mathrm{E}\left \{ #1\right\}}	
\newcommand{\sumn}{{\sum_{k=0}^{N_\epsilon-1} }}
\newtheorem{assumption}[theorem]{Assumption}
\newtheorem{remark}[theorem]{Remark}
\newcommand{\ab}[1]{{\color{black}#1}}
\newcommand{\change}[1]{{\color{black}#1}}
\definecolor{LiyuanGreen}{rgb}{0,0.7,0}
\numberwithin{equation}{section}
\newcommand{\Alpha}{{\cal A}}
\def\EE{\mathbb{E}}
\title {Global Convergence Rate Analysis of a Generic Line Search Algorithm with Noise\footnotemark[1]}
\author{A.~S.~Berahas\footnotemark[2] \and L.~Cao\footnotemark[3] \and K.~Scheinberg\footnotemark[4]}
\date  {\today}
\begin{document}

\maketitle

\renewcommand{\thefootnote}{\fnsymbol{footnote}}
\footnotetext[1]{This work was  partially supported by NSF Grants CCF 16-18717 and TRIPODS 17-40796, DARPA Lagrange award HR-001117S0039 and a Google Faculty Award.}
\footnotetext[2]{Department of Industrial and Operations Engineering, University of Michigan, Ann Arbor, MI, USA; E-mail: \email{albertberahas@gmail.com}}
\footnotetext[3]{Department of Industrial and Systems Engineering, Lehigh University, Bethlehem, PA, USA; E-mail: \email{liyuan@lehigh.edu}}
\footnotetext[4]{School of Operations Research and Information Engineering, Cornell University, Ithaca, NY, USA; E-mail: \email{katyas@cornell.edu}}
\renewcommand{\thefootnote}{\arabic{footnote}}

%
%
%
%

\begin{abstract}
  In this paper, we develop convergence analysis of a modified line search method for objective functions whose value is computed with noise and whose gradient estimates are inexact and possibly random. The noise is assumed to be bounded in absolute value without any additional assumptions.  We extend the framework based on stochastic methods from \cite{cartis2018global} which was developed to provide analysis of a standard line search method with exact function values and random gradients to the case of noisy functions. We introduce \ab{two alternative} conditions on the gradient  which  when  satisfied with some sufficiently large probability at each iteration, guarantees convergence properties of the line search method. We derive expected complexity bounds \ab{ to reach a near optimal neighborhood } for convex, strongly convex and nonconvex functions. \ab{The exact   dependence of the convergence neighborhood  on the noise  is specified. }
\end{abstract}

\section{Introduction}

We consider an unconstrained optimization problem of the form
\begin{align}		\label{eq.prob}
	\min_{x \in \mathbb{R}^n} \phi(x), 
\end{align}
where $f(x\ab{,\xi})=\phi(x) + \ab{e}(x\ab{,\xi})$ is computable, while $\phi(x)$ is not\ab{, and $\xi$ is a random variable with associated probability space $(\Xi, \mathcal{F},P)$}. In other words $f: \mathbb{R}^n \ab{\times \Xi}\rightarrow \mathbb{R}$ is a possibly noisy approximation of a smooth function $\phi: \mathbb{R}^n\rightarrow \mathbb{R}$, and the goal is to minimize $\phi$.  Alternatively, $f(x)$ may be a nonsmooth function and $\phi(x)$ its smooth approximation; see for instance \cite{nesterov2017random, maggiar2018derivative}.  Such problems arise in a plethora of fields such as Derivative-Free Optimization (DFO) \cite{ConnScheVice08c,LarsMeniWild2019}, Simulation Optimization \cite{pasupathy2018sampling} and Machine Learning. There has been a lot of work analyzing the case when \ab{$e:\mathbb{R}^n\times \Xi \rightarrow \mathbb{R}$} is a random function with zero mean. Here, we take a different research direction, allowing $\ab{e}(x\ab{,\xi})$ to be stochastic, deterministic, or adversarial, but assuming that $|\ab{e}(x\ab{,\xi})|\leq \epsilon_f$ for all $x \in  \mathbb{R}^n$ \ab{and all realizations of $\xi$}. While this is a strong assumption, it is often satisfied in practice when $f(x\ab{,\xi})$ is a result of a computer code aimed at computing \ab{(or approximating)} $\phi(x)$, but has inaccuracies due to internal discretization \cite{MoreWild09,more2011estimating}. It will be evident from our analysis, that the modified line search method makes progress as long as $\|\nabla \phi(x)\|$ is sufficiently large compared to the noise. 

Line searches are classical and well-known techniques for improving the performance of optimization algorithms \cite{NoceWrig06}. They allow algorithms to be more robust, less dependent on the choices of hyper-parameters and typically ensure faster practical convergence rates. However, in their original form they rely on exact function and gradient information. Many modern applications give rise to functions for which computing accurate function values and/or gradients is either impossible or prohibitively expensive.  Thus, it is desirable to extend the line search paradigm and its analysis to such functions.  In \cite{cartis2018global} a general line search algorithm was analyzed under the conditions that the function values are {\em exact} but the gradient estimates are {\em  inexact} and {\em random}. 
It is shown that under certain (realizable) probabilistic conditions on the accuracy of the gradient estimates, the resulting line search has the same expected complexity  (up to constants)  as the line search based on exact gradients.  In \cite{BlanchetCartisMenickellyScheinberg2019} a general framework for complexity analysis  of stochastic optimization methods is proposed and applied to a  trust region method.  The same framework is used in \cite{paquette2018stochastic}  to analyze a  line search method applied to {\em stochastic} functions.   This framework  is significantly more complicated than that in \cite{cartis2018global}, but it also relies on casting the algorithm as a stochastic process (a submartingale) and it is again shown that  the expected complexity is the same (up to constants) as that of  regular deterministic gradient descent, under certain probabilistic, and realizable,  conditions on stochastic function values and gradient estimates. 
 
In this paper, we extend the analysis in \cite{cartis2018global} to apply to \eqref{eq.prob}. In particular, we assume that the gradient estimates are random and the function values are noisy. Since the function values are noisy (unlike in \cite{cartis2018global}), the line search is modified to accept steps that may potentially increase the current estimated value. This modification causes significant changes in the analysis of the expected complexity rates, as the analysis in \cite{cartis2018global} heavily relies on the fact that objective function can never be increased by the algorithm. Nevertheless, we are able to extend the results in  \cite{cartis2018global} recovering expected complexity bounds for the cases of convex, strongly convex and nonconvex objective functions. \ab{We note here that we derived the complexity bounds for the condition on the gradient accuracy used in \cite{cartis2018global}, as well as for the so-called {\em norm condition }
used, for example, in \cite{byrd2012sample}.  While, as we discuss later, each gradient accuracy condition can have advantages over the other, depending 
on the setting,  they  can be used interchangeably with relatively small adjustment to the analysis. Specifically, the analysis of the supermartingale is not affected by the choice of this condition, and the key steps of the analysis of the line search method itself are analogous, however, the constants stemming from the gradient condition appear differently in the final complexity bounds.}

The conditions we impose on the line search algorithm are essentially the same as in  \cite{cartis2018global}, while the conditions required for the analysis of the stochastic line search \cite{paquette2018stochastic}, where the noise is unbounded, are more restrictive and thus that analysis does not apply  to the case we consider here.  In particular, while the function value noise is allowed to be unbounded in   \cite{paquette2018stochastic}, it is assumed that it is possible to reduce its variance below any given threshold, for example, by sample averaging. In contrast here we do not assume that the noise is stochastic, thus we do not assume it can be reduced or controlled. Also, the algorithm itself in \cite{paquette2018stochastic}  is more complicated than a simple line search in order to handle unbounded noise.  Moreover, the resulting bounds in \cite{paquette2018stochastic} have worse dependence on constants than those in \cite{cartis2018global} and the bounds we derive in this paper. Finally, in both \cite{BlanchetCartisMenickellyScheinberg2019} and \cite{paquette2018stochastic}  the expected complexity bound is derived for any $\epsilon$, arbitrarily small, under the assumption that the noise can be made arbitrarily small accordingly (at least with sufficient probability). Here we establish  a connection between the level of noise and the convergence neighborhood. Obtaining similar results for  the setting in \cite{BlanchetCartisMenickellyScheinberg2019,paquette2018stochastic}  is nontrivial and  is a subject of future work. 

Our main motivation for this analysis is the recent popularity of smoothing methods for gradient estimates of black-box functions.  Stochastic gradient approximations can be computed at relatively low costs, e.g., via  Gaussian smoothing \cite{flaxman2005online,ES,nesterov2017random} and smoothing on a unit sphere \cite{fazel2018global}, and used within a gradient descent algorithm. This approach has been  analyzed in \cite{nesterov2017random} and more recently used  in several papers for the specific cases  of policy optimization in reinforcement learning and online learning \cite{flaxman2005online, ES, fazel2018global}.  All of these papers employ specific fixed step length gradient descent schemes within limited settings (e.g., convex functions). Our goal is to  develop convergence rate analyses for convex, strongly convex and nonconvex functions, for a generic line search algorithm based on gradient approximations,  that can apply not only to gradient descent, but also to quasi-Newton methods such as L-BFGS \cite{NoceWrig06}.

It turns out that the variance of the stochastic gradients computed via Gaussian and unit sphere smoothing can be bounded from above by the squared norm of the expectation, that is $\|\nabla \phi(x)\|^2$, when  $\phi$ is the smoothing function \cite{berahas2019theoretical}.  \ab{This motivates us  to consider a simpler probabilistic condition on the accuracy of the gradient estimates  in addition to the one used in  \cite{cartis2018global}.  }

\paragraph{Assumptions} Throughout the paper we make the following assumptions. 

\begin{assumption}	\label{assum:lip_cont} 
\textbf{(Lipschitz continuity of the gradients of $\pmb{\phi}$)} 
The function $\phi$ is continuously differentiable, and the gradient of $\phi$ is $L$-Lipschitz continuous for all $x \in \mathbb{R}^n$. 
\end{assumption}

\begin{assumption}	\label{assum:low_bound} \textbf{(Lower bound on $\pmb{\phi}$)} The function $\phi$ is bounded below by a scalar $\hat{\phi}$. 	
\end{assumption}

\begin{assumption} \label{assum:func_bnd} \textbf{(Boundedness of Noise in the Function)} There is a constant $\epsilon_f \geq 0$ such that $ | f(x\ab{,\xi}) - \phi(x)| = |e(x\ab{,\xi})| \leq \epsilon_f$ for all $x \in \mathbb{R}^n$ \ab{and all realizations of $\xi$}.
\end{assumption}

Assumption \ref{assum:func_bnd} may seem very strong, however, we will show that under this assumption the modified line search algorithm converges to a neighborhood of the optimal solution whose size is defined by $\epsilon_f$. Thus, if it is possible to control the value of $\epsilon_f$, then one can tighten the convergence neighborhood. This is possible in many applications, where for example, values of $\phi(x)$ are obtained as a limit to some discretized computation and the error is controlled by the fineness of the discrete grid \cite{MoreWild09,more2011estimating} or if $\phi(x)$ is a smoothed approximation of $f(x\ab{,\xi})$ where the smoothing parameter controls the error between $f(x\ab{,\xi})$ and $\phi(x)$ \cite{maggiar2018derivative,nesterov2017random}. \ab{We stress here that our algorithms and analysis  do {\em not} assume that the noise is stochastic or that the bound $\epsilon_f$ is controllable, just that it is known}.  

\paragraph{Summary of Results}
While we are motivated by some specific methods of computing gradient estimates, in the remainder of the paper, we simply aim to establish complexity bounds on a generic modified line search algorithm applied to the minimization of convex, strongly convex and nonconvex functions, under the condition that the gradient estimate \ab{$g: \mathbb{R}^n \rightarrow \mathbb{R}^n$} satisfies 
\begin{align}	\label{eq:norm}
	\| g(x) - \nabla \phi(x) \| \leq  \theta\|\nabla \phi(x)\|,
\end{align}
for sufficiently small $\theta$ with some probability $1-\delta$.\footnote{The norms used in this paper are Euclidean norms.}  The bound \eqref{eq:norm},  known as a the \emph{norm condition}, was first introduced in \cite{carter1991global} and consequently used in a variety of works (see e.g., \cite{byrd2012sample}). This bound is generally not realizable for generic stochastic gradients estimates, however, it can be made to hold for several deterministic and stochastic gradient estimates such as those used in \cite{berahas2019derivative,nesterov2017random,ConnScheVice08c,fazel2018global}. We establish expected complexity bounds similar to those in \cite{cartis2018global}, where the line search is analyzed under a more complicated bound on $\| g(x) - \nabla \phi(x) \|$ using exact evaluations of $\phi$ (i.e., no noise in the function evaluations). The expected complexity bounds are established in terms of desired accuracy $\epsilon$, under the assumption that $\epsilon$ is sufficiently big compared to the error level $\epsilon_f$. We derive specific bounds on $\epsilon$ with respect to $\epsilon_f$ for convex, strongly convex and nonconvex cases \ab{first for a gradient descent-type algorithm, and then for an algorithm that uses any general descent direction. For completeness, we derive complexity bounds for the condition on the gradient accuracy presented in \cite{cartis2018global}, however, due to the presense of noise, this condition is somewhat modified.}

\paragraph{Organization} The paper is organized as follows. In Section \ref{ls_alg} we describe a general line search algorithm that uses gradient approximations in lieu of the true gradient, and noisy function evaluations of the objective function. We present the stochastic analysis that allows us to bound the expected number of steps required by our generic scheme to reach a desired accuracy  in Section \ref{stoch_analysis}. This analysis is an extension of the results in \cite{cartis2018global} that accounts for noise in the objective function. In Section \ref{convergence_analysis}, we apply the results of  Section \ref{stoch_analysis} to derive global  convergence rates and bounds on $\epsilon$ in terms of $\epsilon_f$ when the generic line search method is applied to convex, strongly convex and nonconvex functions. Finally, in Section \ref{final_remarks} we make some concluding remarks and discuss avenues for future research.

\section{A Generic Modified Line Search Algorithm} \label{ls_alg}

In this section, we describe a generic line search algorithm that uses gradient approximations in lieu of the true gradient and that operates in the noisy regime. In general, line search algorithms construct a possibly noisy approximation of the gradient \ab{at the current iterate $x_k$, $g_k = g(x_k)$,} and compute a search direction using this gradient estimate and possibly additional information, e.g., a quasi-Newton search direction. The step size parameter is then chosen; this could be constant, selected from a predetermined sequence of step lengths (e.g., diminishing) or adaptive (e.g., via a back-tracking Armijo line search \cite[Chapter 3]{NoceWrig06}). The framework of the generic line search method we analyze is given in Algorithm \ref{alg:grad_approx_ls}. As is clear from Algorithm \ref{alg:grad_approx_ls}, the key components of this method are: $(i)$ the construction of the gradient approximation (Step \ref{step:grad_approx}); $(ii)$ the choice of the search direction (Step \ref{step:seach_dir}); and $(iii)$ the choice of the step size parameter and the iterate update (Step \ref{step:step_size}). 

\begin{algorithm}[ht]
  \caption{~\textbf{Generic Line Search Algorithm}}
  \label{alg:grad_approx_ls}
  {\bf Inputs:} Starting point $x_0$, initial step size parameter $\alpha_0 >0$.
  \begin{algorithmic}[1]
    \For{$k=0,1,2,\dots$}
        \State  {\bf Construct a gradient approximation $\ab{g_k}$:} \label{step:grad_approx}
        
        \hspace{0.5cm} Construct an approximation $\ab{g_k}$ of $\nabla \phi(x_k)$.
        \State {\bf Construct a search direction $d_k$:} \label{step:seach_dir}
        
        \hspace{0.5cm} Construct a search direction $d_k$, e.g., $d_k = - \ab{g_k}$ or $d_k = - H_k \ab{g_k}$.
        \State {\bf Compute step size $\alpha_k$ and update the iterate:} \label{step:step_size}
    \EndFor
  \end{algorithmic}
\end{algorithm}


Algorithm \ref{alg:grad_approx_ls} is a generic line search algorithm. We perform the analysis in Section \ref{convergence_analysis} for the case where $d_k = - \ab{g_k}$, and then outline how the analysis can be easily modified to the case of a more general search direction $d_k$, under additional assumptions on $d_k$.  In order to prove theoretical convergence guarantees, we need to fully specify the manner in which the step size parameter is selected at every iteration and how a new iterate is computed (Line \ref{step:step_size}).  We consider Algorithm \ref{alg:grad_approx_ls} for which the step size parameter $\alpha_k$ varies under the condition that $\alpha_k$  is  chosen to satisfy a \emph{modified} version of the  sufficient decrease Armijo condition,
\begin{align}		\label{eq:Armijo}
	f(x_k + \alpha_k d_k,\ab{\xi}) \leq f(x_k,\ab{\xi}) + c_1 \alpha_k d_k^T \ab{g_k} + 2\epsilon_f,
\end{align}
where $c_1 \in (0,1)$ is the Armijo parameter, and $\epsilon_f$ is \ab{the upper bound on the noise}  in the objective function. \ab{Note that the random
variable $\xi$ may have two different realizations when  computing $f(x_k + \alpha_k d_k,\ab{\xi}) $ and $f(x_k,\ab{\xi})$, however, these realizations may be dependent, independent or identical. This does not affect our analysis, thus for simplicity, we do not assign specific notations to different realizations of $\xi$}.  If a trial value $\alpha_k$ does not satisfy \eqref{eq:Armijo}, \ab{for some particular realizations of $\xi$}, then the iteration is called \emph{unsuccessful}; the new iterate is set to the previous iterate, i.e., $x_{k+1} = x_k$, and the step size parameter is set to a (fixed) fraction $\tau \leq 1$ of the previous value, i.e., $\alpha_{k+1} \gets \tau \alpha_k$. This step makes sense particularly when $g_k$ (and thus $d_k$) are random vectors  and thus can be different even for the same $x_k$. If the trial value satisfies \eqref{eq:Armijo}, then the iteration is called \emph{successful}, the new iterate is updated based on the search direction $d_k$, i.e., $x_{k+1} = x_k + \alpha_k d_k$, and the step size parameter is set to $\alpha_{k+1} \gets \tau^{-1} \alpha_k$. Algorithm \ref{alg:ls_sub}, fully specifies a subroutine for computing the step size parameter and taking a step. Note that if $\tau = 1$, Algorithm \ref{alg:grad_approx_ls} is a constant step size parameter line search algorithm. Algorithm \ref{alg:ls_sub} receives $\epsilon_f$ as input from Algorithm \ref{alg:grad_approx_ls}. We do not specify here if Algorithm \ref{alg:grad_approx_ls} receives this quantity as input from the user or has an ability to estimate it, as it may depend on a particular case. 
\begin{algorithm}[ht]
  \caption{~\textbf{Line Search Subroutine}}
\label{alg:ls_sub}
{\bf Inputs:} Current iterate $x_k$, current gradient estimate $\ab{g_k}$, current search direction $d_k$, current step size parameter $\alpha_k$, backtracking factor $\tau \in (0,1]$, Armijo parameter $c_1 \in (0,1)$, bound on the noise $\epsilon_f$.
  \begin{algorithmic}[1]
    \For{$k=0,1,2,\dots$}
        \State  {\bf Check sufficient decrease:} \label{step:suff_dec}
        
        \hspace{0.5cm} Check if  \eqref{eq:Armijo} is satisfied.
        \State {\bf if} Condition Satisfied \textbf{(\emph{successful} step)} {\bf then}
        
        \hspace{0.5cm} $x_{k+1} = x_k + \alpha_k d_k$ and $\alpha_{k+1} \gets \tau^{-1} \alpha_k$.
        \State {\bf else} 
        
        \hspace{0.5cm} $x_{k+1} = x_k$ and $\alpha_{k+1} \gets \tau \alpha_k$.
        \State {\bf Outputs:} New iterate  $x_{k+1}$, new step size parameter $\alpha_{k+1}$
    \EndFor
  \end{algorithmic}
\end{algorithm}

The modified Armijo condition has been used in \cite{berahas2019derivative}. The addition of the term $ 2\epsilon_f$ ensures that a step is \emph{successful} if $\alpha_k$ is small enough and $d_k^T\ab{g_k}$ is large enough. In  \cite{berahas2019derivative} the case of  functions with arbitrary but bounded noise, such as the ones considered here, were considered. However, unlike this paper the error of the gradient estimates was also assumed to be bounded by a constant, and convergence rates were derived for strongly convex objectives only. 

\section{Analysis of the Underlying Stochastic Process} \label{stoch_analysis}

In this section, we describe the general mechanism that is used to provide the theoretical results of the paper. This analysis is an extension of the analysis provided in \cite{cartis2018global} that accounts for possible noise in the function evaluations, i.e., $\ab{e}(x) \neq 0$. 

We begin by introducing several definitions, key assumptions and theoretical results, similar to those in \cite{cartis2018global} but suitably modified as required for the analysis in this paper. \ab{In particular, similar to \cite{cartis2018global}, we view Algorithm \ref{alg:grad_approx_ls} as a stochastic process, generated from a sequence of random function realizations $f(x_k,\xi)$ and gradient estimates $G_k$. With some abuse of notation and for simplicity of the presentation, we introduce the new probability space $(\Omega, \mathcal{F}, P)$ which includes  the randomness in both the function and the gradient realizations. Since the function realizations used by the line search are essentially replaced by their upper and lower bounds in our analysis, the nature of $\Xi$ has no affect on it.}

 The following quantities are random and are important in the analysis:  the gradient estimate $G_k$,  the step size parameter  $\Alpha_k$  and the search direction  $\mathcal{D}_k$.  Realizations of these random quantities are denoted by  $g_k = G_k(\omega_k)$, $x_k = X_k(\omega_k)$, $\alpha_k = \Alpha_k(\ab{\omega_k})$ and $d_k = \mathcal{D}_k(\omega_k)$, respectively. For brevity, we will omit the $\omega_k$ in the notation.  The iterate $X_k$\ab{, given $X_{k-1}$ and $\Alpha_{k-1}$,} is fully determined by $G_{k-1}$ and  the  noise in the function value estimation during iteration $k-1$. The noise may be stochastic or deterministic, let $\mathcal{E}_{k-1}$ denote all noise history up to iteration $k-1$. 
 Note that our algorithm and its analysis are independent of the nature of the noise, but we include $\mathcal{E}_{k-1}$ in the algorithm history for completeness. 
 We use  $\mathcal{F}_{k-1}^{G,\mathcal{E}} = \sigma(G_0,\ldots,G_{k-1},\mathcal{E}_{k-1})$ to denote the  $\sigma$-algebra generated by $G_0,\ldots,G_{k-1}$ and  $\mathcal{E}_{k-1}$,  that is to say, generated by Algorithm \ref{alg:grad_approx_ls} up to the start of iteration $k$.

\paragraph{Sufficiently accurate gradients} We assume that the random gradient approximations $G_k$  satisfy some notion of \emph{good quality}  with probability $1-\delta$. We use the following general notion of \emph{sufficiently accurate} gradients, similar to that presented in \cite{cartis2018global}. 

\begin{definition} \label{def:suff_acc_general} A sequence of random gradients $\{G_k\}$ is $(1-\delta)$-probabilistically ``sufficiently accurate'' for Algorithm \ref{alg:grad_approx_ls}, if the indicator variables
\begin{align*}
	I_k = \mathbbm{1} \{G_k \text{ is a sufficiently accurate  gradient of }\phi \text{ for the given  }\Alpha_k, X_k {\text \ and\ }  \mathcal{D}_k\}
\end{align*}
satisfy the following submartingale condition
\begin{align}	\label{prob-delta-def}
	\mathbb{P}(I_k = 1 | \mathcal{F}_{k-1}^{G,\mathcal{E}}) \geq 1 - \delta,
\end{align}
\ab{for all realizations of $\mathcal{F}_{k-1}^{G,\mathcal{E}}$, }where  $\mathcal{F}_{k-1}^{G,\mathcal{E}} = \sigma(G_0,\ldots,G_{k-1},\mathcal{E}_{k-1})$ is the $\sigma$-algebra generated by $G_0,\ldots,G_{k-1}$ and $\mathcal{E}_{k-1}$. Moreover, we say that iteration $k$ is a \textbf{true} iteration if the event $I_k = 1$ occurs, otherwise the iteration is called \textbf{false}.
\end{definition}

Definition \ref{def:suff_acc_general} is generic, but somewhat less so than the equivalent definition in \cite[Definition 2.1]{cartis2018global} where second order models are also considered and as a result the definition of \emph{``sufficient accuracy''} is not restricted to gradients. The reason Definition \ref{def:suff_acc_general} is generic is because it can be particularized differently depending on the  way the gradient estimates are generated. \ab{Specifically, in Section \ref{convergence_analysis} we define \emph{sufficiently accurate} in two different ways and derive expected complexity bounds for Algorithm \ref{alg:grad_approx_ls}. The first definition is motivated by the specific setting where estimates $g_k$ are computed via finite differences, interpolation or smoothing \cite{berahas2019derivative,berahas2019theoretical,nesterov2017random,fazel2018global,ConnScheVice08c}. The second definition is similar to that presented in \cite{cartis2018global}.}


\paragraph{Number of iterations $N_\epsilon$ to reach $\epsilon$ accuracy} The main goal of this section is to derive bounds on the expected number of iterations $\mathbb{E}[N_\epsilon]$ required to reach a desired level of accuracy $\epsilon$. \change{We formally define $N_\epsilon$ as follows.
\begin{definition}\label{def.stop}
\ 
\begin{itemize}
	\item If $\phi$ is convex or strongly convex: $N_\epsilon$ is the number of iterations required until $\phi(X_k) - \phi^\star\ \leq \epsilon$ occurs for the first time. Note, $\phi^\star = \phi(x^\star)$, where $x^\star$ is a global minimizer of $\phi$.
	\item If $\phi$ is nonconvex: $N_\epsilon$ is  the number of iterations required until $\| \nabla \phi(X_k)\| \leq \epsilon$ occurs for the first time. 
\end{itemize}
\end{definition}
}

Thus $N_\epsilon$ is a random variable with the property $\sigma(\mathbbm{1} \{N_\epsilon>k\})\subset\mathcal{F}_{k-1}^{G, \mathcal{E}} $, thus it is  a stopping time for our stochastic process; see \cite[Section 2]{cartis2018global}. To bound $\EE[N_\epsilon]$ we assume that while $k<N_\epsilon$ the  stochastic process induced  by Algorithm \ref{alg:grad_approx_ls} behaves in a certain way. Specifically, it tends to make a certain amount of progress towards optimality.

\paragraph{Measure of progress towards optimality and upper bound} As is done in \cite[Section 2]{cartis2018global}, let $Z_k$ denote a measure of progress towards optimality \ab{(from any starting point $x_0 \in \mathbb{R}^n$)}, and let $Z_\epsilon$ be an upper bound for $Z_k$, for $k<N_\epsilon$\footnote{$F_k$ and $F_\epsilon$ is the notation used in \cite{cartis2018global}.}. In particular, our analysis will use the definitions of   $Z_k$ and  $Z_\epsilon$ as described in Table \ref{tbl:prog_upper}. 
\begin{table}[h!]
\caption{ Definitions of $Z_k$ and $Z_\epsilon$ for convex, strongly convex and nonconvex functions.}
\label{tbl:prog_upper}
\centering 
\begin{tabular}{lcc}
\toprule
\textbf{Function} &
 \textbf{$\pmb{Z_k}$} &
 \textbf{$\pmb{Z_\epsilon}$}  \\  \midrule
\textbf{convex} &  $\frac{1}{\phi(X_k) - \phi^\star} - \frac{1}{\phi(X_0) - \phi^\star}$ & $\frac{1}{\epsilon} - \frac{1}{\phi(X_0) - \phi^\star}$ \\ \hdashline
\textbf{strongly convex} & $\log \left( \frac{\phi(X_0) - \phi^\star}{\phi(X_k) - \phi^\star} \right)$ & $ \log\left( \frac{\phi(X_0) - \phi^\star}{\epsilon} \right)$  \\ \hdashline
\textbf{nonconvex} &  $\phi(X_0) - \phi(X_k)$ & $\phi(X_0) -  \hat{\phi} $  \\ 
 \bottomrule
\end{tabular}
\end{table}

\bigskip

We are now ready to introduce the key assumption of the behavior of the stochastic process  $\{\Alpha_k, Z_k\}$ generated by Algorithm \ref{alg:grad_approx_ls} under which we derive a bound on $\EE[N_\epsilon]$.  In Section \ref{convergence_analysis}, we show that this assumption holds for our generic line search algorithm, under a particular definition  of {\em sufficiently accurate} gradient estimates, and thus we will be able to  derive the expected complexity bound. 

Recall that when the gradient estimate $g_k$ is  \emph{sufficiently accurate}, the iteration is called \emph{true}, and this is assumed to happen with probability at least $1-\delta$, conditioned on the past. The following assumption is a modification of the assumption in \cite[Section 2.4, Assumption 2.1]{cartis2018global}. Let $z_k = Z_k(\omega_k)$ be a realization of the random quantity $Z_k$. Note,  $z_k = Z_k(\omega_k)$ is a measure of progress towards optimality.

\begin{assumption}\label{ass:alg_behave}
There exist a constant $\bar{\alpha}>0$, a nondecreasing function $h(\alpha): \mathbb{R} \rightarrow \mathbb{R}$, which satisfies $h( \alpha)>0$ for any $\alpha>0$, and a nondecreasing function $r(\epsilon_f):\mathbb{R} \rightarrow \mathbb{R}$, which satisfies $r(\epsilon_f)\geq0$ for any $\epsilon_f\geq0$, such that for any realization of  Algorithm \ref{alg:grad_approx_ls} the following hold for all $k<N_\epsilon$:
\begin{itemize}
\item[(i)]   If iteration $k$ is \textbf{true} (i.e.,  $I_k=1$) and \textbf{successful}, then $z_{k+1}\geq z_k+h(\alpha_k)-r(\epsilon_f)$. 
\item[(ii)] If $\alpha_k  \leq  \bar{\alpha}$ and iteration $k$ is \textbf{true}  then
  iteration $k$ is also \textbf{successful}, which implies
$\alpha_{k+1}=\tau^{-1}\alpha_k$. 
\item[(iii)] $z_{k+1}\geq z_k-r(\epsilon_f)$ for all \textbf{successful} iterations $k$ and $z_{k+1}\geq z_k$ for all \textbf{unsuccessful} iteration $k$.
\item[(iv)]  The ratio $ r(\epsilon_f) / h(\bar{\alpha})$ is bounded from above by some $\gamma\in (0, 1)$.  
\end{itemize}
\end{assumption}

Assumption \ref{ass:alg_behave} provides guarantees of progress for the process $Z_k$, using  guaranteed increase $h(\alpha_k)$ and possible decrease $r(\epsilon_f)$. These quantities will be specified for each case (convex, strongly convex, nonconvex) in Section \ref{convergence_analysis}.   The key difference between Assumption \ref{ass:alg_behave} and the corresponding assumption in \cite[Section 2.4, Assumption 2.1]{cartis2018global} is that on each \emph{successful} iteration  $Z_k$ may decrease by up to $r(\epsilon_f)$. When $r(\epsilon_f)=0$,  Assumption \ref{ass:alg_behave} reduces to  the assumption in  \cite[Section 2.4, Assumption 2.1]{cartis2018global} and in this case $\gamma$ can be chosen arbitrarily close to $0$. When $r(\epsilon_f)>0$,  the  process $Z_k$ may decrease on some \emph{successful} iterations; see Assumption \ref{ass:alg_behave}(iii). Assumption \ref{ass:alg_behave}(i) states that $Z_k$ is guaranteed to increase on  \emph{true} \emph{successful} iterations by at least the quantity $h(\alpha_k)-r(\epsilon_f)$ which is positive due to Assumption \ref{ass:alg_behave}(iv). The constant $\gamma$ serves as a parameter that measures how much $h(\alpha_k)$ dominates $r(\epsilon_f)$. As we will see, $\gamma$ can be chosen to be fixed, for example $\gamma=\frac{1}{2}$, and  Assumption \ref{ass:alg_behave}(iv) then simply dictates that  $h(\alpha_k)\geq 2r(\epsilon_f)$. The guaranteed value of progress $h(\alpha_k)$ is larger when the target accuracy $\epsilon$ is larger, which in turn  implies the connection between the level of noise $\epsilon_f$ and the target accuracy $\epsilon$. In other words,  $\gamma$ is not an algorithmic parameter, it is simply a parameter whose value implies a particular bound on the neighborhood of convergence.   
 
As in  \cite{cartis2018global} we  define the following additional indicator random variables:
\begin{align*}
\Lambda_k=\mathbbm{1}\{ \Alpha_k >\bar{\alpha}\}, \qquad
\bar \Lambda_k=\mathbbm{1}\{ \Alpha_k \geq \bar{\alpha}\}, 
\end{align*}
\begin{align*}
\Theta_k=\mathbbm{1}\{{\rm Iteration\ } k\ {\rm is\ \emph{successful}\ i.e.,\ } \Alpha_{k+1}=\tau^{-1}\Alpha_k \}.
\end{align*}
Note that $\sigma(\Lambda_k)\subset \mathcal{F}_{k-1}^{G, \mathcal{E}} $, $\sigma(\bar \Lambda_k)\subset \mathcal{F}_{k-1}^{G, \mathcal{E}} $ and  $\sigma(\Theta_k)\subset \mathcal{F}_{k}^{G, \mathcal{E}} $, that is the random variables 
$\Lambda_k$ and $\bar \Lambda_k$ are fully determined by the first $k-1$ steps of the algorithm, while $\Theta_k$ is fully determined by the first $k$ steps. 

 Without loss of generality, we assume that $\bar\alpha=\tau^{c}\alpha_0$ for some positive integer $c$. In other words, $\bar\alpha$ is the largest value that the step size $\Alpha_k$ actually achieves for which  part $(ii)$ of Assumption \ref{ass:alg_behave} holds. Note that if $\tau=1$, the algorithm uses a constant step size and hence has to start with the value for which Assumption \ref{ass:alg_behave} holds, \ab{i.e., $\alpha \leq \bar\alpha$,} in order to converge. 

In summary, under Assumption \ref{ass:alg_behave}, recalling the update rules for
$\alpha_k$ in Algorithm \ref{alg:grad_approx_ls}    we can write the stochastic process $\{\Alpha_k, Z_k\}$ as obeying the expressions below:

\begin{equation}\label{eq:proc1_Zk}
\Alpha_{k+1}
= \left \{ \begin{array}{ll} \tau^{-1} \Alpha_k &{\rm if\  }\Theta_k = 1, \\
 \tau \Alpha_k &    {\rm if\  }  \Theta_k=0,
\end{array}\right . 
=\left \{ \begin{array}{ll} \tau^{-1} \Alpha_k& 
{\rm if\  } I_k=1\ {\rm and \ } \Lambda_k=0,\\ 
\tau^{-1} \Alpha_k & {\rm if\  }\Theta_k = 1, \ I_k=0\ {\rm and \ } \Lambda_k=0, \\
\tau \Alpha_k & {\rm if\  }\Theta_k = 0, \ I_k=0\ {\rm and \ } \Lambda_k=0, \\
 \tau^{-1} \Alpha_k & {\rm if\  }  \Theta_k=1\ {\rm and \ } \Lambda_k=1, \\
 \tau \Alpha_k &    {\rm if\  }  \Theta_k=0\ {\rm and \ } \Lambda_k=1,
\end{array}\right . 
\end{equation}

 \begin{equation}\label{eq:proc1_Yk}
Z_{k+1}\geq \left \{ \begin{array}{ll} Z_k+h(\Alpha_k) -r(\epsilon_f)& {\rm if\  } \Theta_k=1\ {\rm and \ } I_k=1,\\
Z_k -r(\epsilon_f)& {\rm if\  }  \Theta_k=1 \  {\rm and \ } I_k=0,\\ 
 Z_k & {\rm if\  } \Theta_k=0.
 \end{array}\right .
\end{equation}

\subsection{Analysis of the  stochastic processes}\label{sec:akfkproc}
 We now present the derivation of the bounds on  $\EE\left [N_\epsilon\right ]$  under Assumption \ref{ass:alg_behave}, 
 by modifying the analysis in  \cite{cartis2018global}. We start by introducing a useful lemma from  \cite{cartis2018global}.

\begin{lemma}\label{lem:frac_of_true}
Let $N_\epsilon$ denote the stopping time. For all $k<N_\epsilon$,
 let $I_k$ be the sequence of random variables in Definition \ref{def:suff_acc_general} so that \eqref{prob-delta-def} holds.
Let $W_k$ be a nonnegative stochastic process such that $\sigma(W_k)\subset {\cal F}^{G,\mathcal{E}}_{k-1}$, 
for any $k\geq 0$. Then,
\begin{align*}
\EE \left [\sumn  W_kI_k  \right ]\geq (1-\delta) \EE \left [\sumn W_k\right].
\end{align*}
Similarly,
\begin{align*}
\EE\left [\sumn  W_k(1-I_k )\right ]\leq \delta \EE \left [\sumn W_k\right ].
\end{align*}
\end{lemma}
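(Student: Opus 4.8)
The plan is to reduce both statements to a term-by-term conditioning argument, exploiting the fact that the random truncation $N_\epsilon$ is a stopping time, so that the event $\{k < N_\epsilon\}$ is ``predictable'' relative to the filtration. First I would rewrite the random-length sum as a full infinite series gated by the stopping-time indicator,
\[
\sumn W_k I_k = \sum_{k=0}^{\infty} W_k I_k \,\mathbbm{1}\{k < N_\epsilon\},
\]
and likewise for $\sumn W_k$. The crucial observation is that both $W_k$ and $\mathbbm{1}\{k < N_\epsilon\}$ are $\mathcal{F}_{k-1}^{G,\mathcal{E}}$-measurable: the former by hypothesis ($\sigma(W_k) \subset \mathcal{F}_{k-1}^{G,\mathcal{E}}$), and the latter because $N_\epsilon$ is a stopping time with $\sigma(\mathbbm{1}\{N_\epsilon > k\}) \subset \mathcal{F}_{k-1}^{G,\mathcal{E}}$. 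Hence the product $W_k \mathbbm{1}\{k < N_\epsilon\}$ is nonnegative and $\mathcal{F}_{k-1}^{G,\mathcal{E}}$-measurable, which is precisely the structure needed to invoke the submartingale condition \eqref{prob-delta-def}.

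Next I would interchange expectation and summation. Since every summand is nonnegative, Tonelli's theorem justifies
\[
\EE\left[\sum_{k=0}^{\infty} W_k I_k \,\mathbbm{1}\{k < N_\epsilon\}\right] = \sum_{k=0}^{\infty} \EE\left[W_k I_k \,\mathbbm{1}\{k < N_\epsilon\}\right].
\]
Then for each fixed $k$ I would apply the tower property, conditioning on $\mathcal{F}_{k-1}^{G,\mathcal{E}}$ and pulling the measurable factor $W_k \mathbbm{1}\{k < N_\epsilon\}$ outside the inner conditional expectation:
\[
\EE\left[W_k I_k \,\mathbbm{1}\{k < N_\epsilon\}\right] = \EE\left[W_k \,\mathbbm{1}\{k < N_\epsilon\}\, \EE\!\left[I_k \mid \mathcal{F}_{k-1}^{G,\mathcal{E}}\right]\right].
\]
Since $\EE[I_k \mid \mathcal{F}_{k-1}^{G,\mathcal{E}}] = \mathbb{P}(I_k = 1 \mid \mathcal{F}_{k-1}^{G,\mathcal{E}}) \geq 1-\delta$ and $W_k \mathbbm{1}\{k < N_\epsilon\} \geq 0$, each term is bounded below by $(1-\delta)\,\EE[W_k \mathbbm{1}\{k < N_\epsilon\}]$. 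Summing over $k$ and folding the series back into the random-length sum yields the first inequality.

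The second inequality follows the identical template with $I_k$ replaced by $1 - I_k$, using that $\EE[1 - I_k \mid \mathcal{F}_{k-1}^{G,\mathcal{E}}] = 1 - \mathbb{P}(I_k = 1 \mid \mathcal{F}_{k-1}^{G,\mathcal{E}}) \leq \delta$, which reverses the direction of the bound. I expect the only genuine subtlety --- rather than a true obstacle --- to be the bookkeeping around the infinite-series interchange together with the measurability of $\{k < N_\epsilon\}$: one must confirm that the stopping-time property indeed renders this indicator $\mathcal{F}_{k-1}^{G,\mathcal{E}}$-measurable, so that it may be treated as a known factor under the conditioning, and that nonnegativity of all factors legitimizes both the Tonelli interchange and the direction-preserving inequality. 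Everything else is mechanical.
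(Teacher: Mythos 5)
Your proof is correct and takes essentially the same route as the paper's, which omits the argument and defers to \cite[Lemma 2.3]{cartis2018global}: gating the sum with the stopping-time indicator, interchanging sum and expectation by nonnegativity, and applying the tower property with the submartingale condition \eqref{prob-delta-def} is exactly that argument. The measurability point you flag as the only subtlety is indeed available, since the paper establishes $\sigma(\mathbbm{1}\{N_\epsilon > k\}) \subset \mathcal{F}_{k-1}^{G,\mathcal{E}}$, so $\mathbbm{1}\{k < N_\epsilon\}$ may legitimately be pulled out of the conditional expectation together with $W_k$.
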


\ab{For brevity, we omit the proof of Lemma \ref{lem:frac_of_true}; see \cite[Lemma 2.3]{cartis2018global}.}

The following lemma from \cite{cartis2018global} bounds the  number of steps for which $\alpha_k\leq \bar \alpha$. The proof depends only on the probabilities of different outcomes and not on the changes in $Z_k$, thus the proof from \cite{cartis2018global} applies directly. 

 \begin{lemma}\label{lem:hittime1} The expected number of steps for which $\alpha_k\leq \bar \alpha$ can be bounded as,
\begin{align*}
  \EE\left [\sumn (1-\Lambda_k)\right]\leq \frac{1}{2(1-\delta)}\EE[N_\epsilon].
\end{align*}
\end{lemma}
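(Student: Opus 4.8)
The plan is to combine a probabilistic reduction, which invokes \lemref{frac_of_true} together with \asref{alg_behave}(ii), with a purely pathwise counting bound on the step-size dynamics; crucially, neither ingredient references the progress measure $Z_k$, which is exactly why the argument of \cite{cartis2018global} transfers unchanged to the noisy setting. \textbf{Step 1 (reduce to counting successful small steps).} First I would replace the quantity of interest by the expected number of \emph{successful} iterations with $\alpha_k \le \bar\alpha$. The enabling observation is \asref{alg_behave}(ii): on any iteration with $\alpha_k \le \bar\alpha$ (so $\Lambda_k=0$) that is true ($I_k=1$), the step must be successful ($\Theta_k=1$). Hence, for every realization, $(1-\Lambda_k)I_k \le (1-\Lambda_k)\Theta_k$. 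Since $\sigma(\Lambda_k)\subset \mathcal{F}_{k-1}^{G,\mathcal{E}}$, the process $W_k := 1-\Lambda_k$ is nonnegative and $\mathcal{F}_{k-1}^{G,\mathcal{E}}$-measurable, so \lemref{frac_of_true} yields
\begin{align*}
(1-\delta)\,\EE\Big[\sumn (1-\Lambda_k)\Big] \le \EE\Big[\sumn (1-\Lambda_k) I_k\Big] \le \EE\Big[\sumn (1-\Lambda_k)\Theta_k\Big].
\end{align*}
It therefore suffices to show that the expected number of successful small steps is at most $\tfrac12\,\EE[N_\epsilon]$.

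\textbf{Step 2 (pathwise random-walk bound).} I would next establish the deterministic inequality $\sumn (1-\Lambda_k)\Theta_k \le \tfrac12 N_\epsilon$ along every realization, after which taking expectations completes the proof. To do so, encode the step size on its geometric grid: writing $\alpha_k = \tau^{j_k}\alpha_0$ with integer $j_k$ and $j_0=0$, a successful step sends $j_k \mapsto j_k-1$ and an unsuccessful step sends $j_k \mapsto j_k+1$, so $\{j_k\}$ is a nearest-neighbor walk on $\mathbb{Z}$. Because $\bar\alpha = \tau^c\alpha_0$ with $c\ge 1$ a positive integer, the event $\{\alpha_k \le \bar\alpha\}$ is exactly $\{j_k \ge c\}$; a successful small step is a downward move from a level $\ge c$, while every unsuccessful step is an upward move of some edge. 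The key is the up-crossing/down-crossing balance: a nearest-neighbor path started at $j_0=0$ cannot down-cross an edge $(m,m+1)$ with $m\ge 0$ more often than it up-crosses it. Summing this over the edges $m\ge c-1$ (legitimate precisely because $c\ge 1$ places the start below all of them) bounds the number $A$ of successful small steps by the number of unsuccessful steps taken from levels $\ge c-1$, hence by the total number $U$ of unsuccessful iterations, i.e. $A \le U$. Since the successful and unsuccessful iterations partition the $N_\epsilon$ iterations, $A+U \le N_\epsilon$, and $A\le U$ then forces $2A \le A+U \le N_\epsilon$, giving $A \le \tfrac12 N_\epsilon$.

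\textbf{Combining, and the main obstacle.} Taking expectations in Step 2 gives $\EE\big[\sumn(1-\Lambda_k)\Theta_k\big] \le \tfrac12\EE[N_\epsilon]$, and chaining this with Step 1 produces $(1-\delta)\,\EE\big[\sumn(1-\Lambda_k)\big] \le \tfrac12\EE[N_\epsilon]$, which is the claim. The routine part is Step 1, which is essentially a measurability check plus one application of \lemref{frac_of_true}. The substantive part is the pathwise bound of Step 2, and the single point that demands care is the crossing identity: one must match each successful small step injectively to a distinct \emph{earlier} unsuccessful step and handle the boundary behaviour at level $c$ correctly, which is exactly where the normalization $\bar\alpha=\tau^c\alpha_0$ with $c$ a positive integer is used so that the walk provably starts below the relevant threshold.
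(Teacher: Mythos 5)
Your proof is correct and takes essentially the same route as the paper, whose own proof simply invokes \lemref{frac_of_true} with $W_k = 1-\Lambda_k$ together with \asref{alg_behave}(ii) and defers the remaining counting argument to \cite{cartis2018global}; your Step 2 just makes that deferred up-/down-crossing count explicit. The same pairing of each successful small step with a distinct earlier unsuccessful one is also the mechanism the paper itself uses in the proof of \lemref{smlsuccbnd}.
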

\begin{proof}{ 
The proof uses Lemma \ref{lem:frac_of_true} with $W_k = 1 - \Lambda_k$, and is the same as in \cite{cartis2018global}.}
\end{proof}
   
We now turn to the derivation of  the bound on 
\begin{align*}
	\EE \left [\sumn  \Lambda_k\right ],
\end{align*} 
which requires a substantially more elaborate analysis than that in \cite{cartis2018global} but is similar in spirit. The key difference is that, while in  \cite{cartis2018global} $Z_k$ never decreases, here we have to account for all iterations where $Z_k$ may decrease, and bound their expected number.   For brevity of notation, we define the following quantities:
\begin{itemize}
\item $N_{FS}=\sumn \bar \Lambda_k (1-I_k)\Theta_k$ - the number of \emph{false successful} iterations with $\Alpha_k\geq \bar\alpha$.
\item $N_{TS}=\sumn \bar \Lambda_k I_k \Theta_k$ - the number of \emph{true successful} iterations with $\Alpha_k\geq \bar\alpha$.
\item $N_{F}=\sumn \bar \Lambda_k (1-I_k)$ - the number of \emph{false}  iterations with $\Alpha_k\geq \bar\alpha$.
\item $N_{T}=\sumn \bar \Lambda_k I_k$ - the number of \emph{true} iterations with $\Alpha_k\geq \bar\alpha$.
\item $N_{TU}=\sumn  \Lambda_k I_k (1-\Theta_k)$ -  the number of \emph{true unsuccessful} iterations with $\Alpha_k>\bar\alpha$.
\item $N_{U}=\sumn  \Lambda_k  (1-\Theta_k)$ -  the number of \emph{unsuccessful} iterations with $\Alpha_k> \bar\alpha$.
\item $N_{SS}=\sumn  (1-\bar \Lambda_k)\Theta_k$ -  the number of  \emph{successful} iterations with $\Alpha_k<\bar\alpha$ (\emph{small} $\Alpha_k$).
\end{itemize}

Since 
\begin{align}\label{eq:false_true}
\EE\left [\sumn \Lambda_k\right ]= \EE \left[ \sumn \Lambda_k (1-I_k)\right ]+\EE \left[ \sumn \Lambda_k I_k\right ]\leq \EE[N_{F}]+\EE[N_{T}],
\end{align}
our goal is to bound $\EE[N_{F}]+\EE[N_{T}]$. 

We now establish several inequalities relating  the quantities we just defined. We begin with,
\begin{equation}\label{eq:M2}
N_{T}=N_{TS}+N_{TU}\leq N_{TS}+N_{U}.
\end{equation}
The equality above holds because by Assumption \ref{ass:alg_behave}(ii) there are no  \emph{true unsuccessful} iterations when $\Alpha_k= \bar\alpha$.

\begin{lemma}\label{lem:bound_on_big}
For any $l\in \{0,\ldots,N_{\epsilon}-1\}$ and for all realizations of Algorithm \ref{alg:grad_approx_ls}, we have 
\begin{align*}
\sum_{k=0}^l \Lambda_k(1- \Theta_k) \leq \sum_{k=0}^l\bar \Lambda_k\Theta_k + \log_{\tau}\left (\frac{\bar \alpha}{\alpha_0}\right ), 
\end{align*}
hence when $l=N_\epsilon-1$, 
\begin{equation}\label{eq:inc_dec}
N_{T}\leq N_{FS}+2N_{TS} + \log_{\tau}\left (\frac{\bar \alpha}{\alpha_0}\right ).
\end{equation}
\end{lemma}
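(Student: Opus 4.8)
The plan is to prove the displayed inequality pathwise, for a fixed realization of Algorithm~\ref{alg:grad_approx_ls} (so that the conclusion then holds for all realizations), by tracking the step size through a single nonnegative potential. Since every step multiplies $\Alpha_k$ by either $\tau^{-1}$ (successful) or $\tau$ (unsuccessful), and $\Alpha_0=\alpha_0$, I would write $\Alpha_k=\tau^{p_k}\alpha_0$ for an integer exponent $p_k$, so that $p_0=0$, $p_{k+1}=p_k-1$ when $\Theta_k=1$, and $p_{k+1}=p_k+1$ when $\Theta_k=0$. Recalling $\bar\alpha=\tau^{c}\alpha_0$, the threshold corresponds to the level $p=c$, and because $\tau\in(0,1]$ we have $\Lambda_k=1\iff p_k<c$ and $\bar\Lambda_k=1\iff p_k\le c$. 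I would then introduce the potential $V_k=\max\{c-p_k,\,0\}$, which equals $\max\{\log_\tau(\bar\alpha/\Alpha_k),0\}$, is nonnegative, and satisfies $V_0=c=\log_\tau(\bar\alpha/\alpha_0)$.

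The key step is to verify the exact pathwise identity
\begin{equation*}
V_{k+1}-V_k=\bar\Lambda_k\Theta_k-\Lambda_k(1-\Theta_k),
\end{equation*}
which I would establish by checking the three regimes $p_k<c$, $p_k=c$, $p_k>c$ against $\Theta_k\in\{0,1\}$. When $p_k<c$ the move stays in (or on the boundary of) the clipped region, and $V_k$ goes up by one on a successful step and down by one on an unsuccessful step; when $p_k=c$ a successful step raises $V_k$ by one while an unsuccessful step leaves it at zero; and when $p_k>c$ the potential stays at zero regardless of $\Theta_k$. In each regime the right-hand side reproduces the increment, using that the two events $\{\bar\Lambda_k=1,\Theta_k=1\}$ and $\{\Lambda_k=1,\Theta_k=0\}$ are disjoint (they disagree on $\Theta_k$). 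I expect this case analysis, and in particular the correct handling of the clipping of $V_k$ at the boundary $\Alpha_k=\bar\alpha$, to be the only delicate point of the argument.

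With the identity in hand, summing over $k=0,\dots,l$ telescopes, giving
\begin{equation*}
\sum_{k=0}^l\Lambda_k(1-\Theta_k)=\sum_{k=0}^l\bar\Lambda_k\Theta_k+V_0-V_{l+1}\le\sum_{k=0}^l\bar\Lambda_k\Theta_k+\log_\tau\!\left(\frac{\bar\alpha}{\alpha_0}\right),
\end{equation*}
where the inequality uses $V_{l+1}\ge0$ and $V_0=\log_\tau(\bar\alpha/\alpha_0)$. This is the first displayed bound, and nothing in the argument actually uses $l<N_\epsilon$, so it holds for every $l\ge0$.

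Finally, to obtain \eqref{eq:inc_dec} I would set $l=N_\epsilon-1$ and identify the two sums with the named quantities: the left-hand side is exactly $N_U=\sumn\Lambda_k(1-\Theta_k)$, while writing $1=I_k+(1-I_k)$ splits $\sumn\bar\Lambda_k\Theta_k$ into $N_{TS}+N_{FS}$. Thus $N_U\le N_{TS}+N_{FS}+\log_\tau(\bar\alpha/\alpha_0)$. Combining this with \eqref{eq:M2}, namely $N_T\le N_{TS}+N_U$, yields $N_T\le 2N_{TS}+N_{FS}+\log_\tau(\bar\alpha/\alpha_0)$, which is the claimed inequality.
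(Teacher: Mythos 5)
Your proof is correct and is essentially the paper's argument: the paper proves the first inequality by observing that every decrease of $\Alpha_k$ above $\bar\alpha$ must be balanced by an increase at or above $\bar\alpha$ or by one of the $\log_\tau(\bar\alpha/\alpha_0)$ initial steps, and then splits $\sumn \bar\Lambda_k\Theta_k$ into $N_{TS}+N_{FS}$ and invokes \eqref{eq:M2} exactly as you do. Your potential $V_k=\max\{\log_\tau(\bar\alpha/\Alpha_k),0\}$ with its pathwise increment identity is simply a rigorous bookkeeping device for that counting observation, which the paper states informally as "a simple consequence."
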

\begin{proof}{On \emph{successful} iterations $\Alpha_k$ is increased and on \emph{unsuccessful} iterations $\Alpha_k$ is decreased. Hence, the total number of steps
when $\Alpha_k>\bar{\alpha}$ and $\Alpha_k$ is decreased, is bounded by the total number of steps when $\Alpha_k\geq \bar{\alpha}$ is increased plus  the number of steps required to reduce $\Alpha_k$ from its initial value $\alpha_0$ to $\bar{\alpha}$. The first inequality of the lemma is a simple consequence of this observation.

Now for $l=N_\epsilon-1$ this inequality becomes
\begin{align*}
N_U\leq N_{TS}+N_{FS} + \log_{\tau}\left (\frac{\bar \alpha}{\alpha_0}\right )
\end{align*}
which   combined with \eqref{eq:M2} gives us  \eqref{eq:inc_dec}.}
\end{proof}

\begin{lemma}\label{no_of_false_lemma} The expected number of \textbf{false}  iterations with $\Alpha_k\geq \bar\alpha$ can be bounded as,
\begin{equation*}
\EE[N_{F}]\leq \frac {\delta}{1-\delta}\EE[N_{T}]. 
\end{equation*}
\end{lemma}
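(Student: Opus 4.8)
The plan is to apply Lemma~\ref{lem:frac_of_true} twice with the single choice $W_k = \bar\Lambda_k$. The crucial structural fact that makes this work is the measurability noted earlier in the excerpt, namely $\sigma(\bar\Lambda_k)\subset \mathcal{F}_{k-1}^{G,\mathcal{E}}$: the indicator $\bar\Lambda_k=\mathbbm{1}\{\Alpha_k\geq\bar\alpha\}$ is determined by the first $k-1$ steps and hence is predictable with respect to the filtration, which is exactly the hypothesis Lemma~\ref{lem:frac_of_true} requires of $W_k$. It is also clearly nonnegative.

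First I would feed $W_k=\bar\Lambda_k$ into the \emph{second} inequality of Lemma~\ref{lem:frac_of_true} to control the false count. Since $N_F=\sumn \bar\Lambda_k(1-I_k)$, this gives directly
\begin{align*}
\EE[N_F]=\EE\left[\sumn \bar\Lambda_k(1-I_k)\right]\leq \delta\, \EE\left[\sumn \bar\Lambda_k\right].
\end{align*}
Next I would feed the same $W_k=\bar\Lambda_k$ into the \emph{first} inequality of Lemma~\ref{lem:frac_of_true} to lower-bound the true count. Since $N_T=\sumn \bar\Lambda_k I_k$, this yields
\begin{align*}
\EE[N_T]=\EE\left[\sumn \bar\Lambda_k I_k\right]\geq (1-\delta)\,\EE\left[\sumn \bar\Lambda_k\right],
\end{align*}
so that $\EE\left[\sumn \bar\Lambda_k\right]\leq \frac{1}{1-\delta}\EE[N_T]$ after dividing by $1-\delta$ (using $\delta<1$).

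Finally I would chain the two displays, substituting the upper bound on $\EE\left[\sumn \bar\Lambda_k\right]$ into the first one, to obtain
\begin{align*}
\EE[N_F]\leq \delta\,\EE\left[\sumn \bar\Lambda_k\right]\leq \frac{\delta}{1-\delta}\EE[N_T],
\end{align*}
which is the claim. I do not expect any real obstacle here: the entire content is the observation that the total number of iterations with $\Alpha_k\geq\bar\alpha$, namely $\sumn \bar\Lambda_k$, is split into true and false parts, and the submartingale accuracy condition \eqref{prob-delta-def} forces at most a $\delta$ fraction of them (in expectation) to be false while at least a $(1-\delta)$ fraction are true. The only point requiring a moment of care is confirming that $\bar\Lambda_k$ is $\mathcal{F}_{k-1}^{G,\mathcal{E}}$-measurable so that Lemma~\ref{lem:frac_of_true} is applicable; this is precisely why $\bar\Lambda_k$ (rather than a quantity depending on $\Theta_k$, which is only $\mathcal{F}_{k}^{G,\mathcal{E}}$-measurable) is the correct weight to use.
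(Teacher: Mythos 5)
Your proof is correct and is essentially the paper's own argument: the paper's proof simply invokes Lemma~\ref{lem:frac_of_true} (citing \cite{cartis2018global}), which amounts to exactly your application with $W_k=\bar\Lambda_k$ and the decomposition $\sumn\bar\Lambda_k = N_F+N_T$. Your two-inequality chaining is equivalent to the standard one-inequality version (since $\EE[N_F]+\EE[N_T]=\EE\bigl[\sumn\bar\Lambda_k\bigr]$, the two inequalities of Lemma~\ref{lem:frac_of_true} carry the same information here), and your measurability check for $\bar\Lambda_k$ is the right point of care.
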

\begin{proof}{ 
The proof uses Lemma \ref{lem:frac_of_true} and is the same as in \cite{cartis2018global}.}
\end{proof}

Hence, by \eqref{eq:M2} and Lemmas \ref{lem:bound_on_big} and \ref{no_of_false_lemma}, we have 
\begin{align}\label{eq:prelimbound}
	\EE[N_{F}]+\EE[N_{T}] & \leq \frac{1}{1-\delta}\EE[N_{T}] \nonumber\\
	&\leq \frac{1}{1-\delta} \left( \EE[N_{TS}]+\EE[N_{U}] \right) \nonumber\\
&\leq \frac{1}{1-\delta} \left (\EE[N_{FS}]+2\EE[N_{TS}] + \log_\tau \(\frac{\bar{\alpha}}{\alpha_0}\)\right ).
\end{align}

We now bound $\EE[N_{SS}]$, the number of  \emph{successful} iterations with $\Alpha_k<\bar\alpha$.
  \begin{lemma}\label{lem:smlsuccbnd} The expected number of  \textbf{successful} iterations with $\Alpha_k<\bar\alpha$ can be bounded as,
\begin{align*}
  \EE\left [N_{SS}\right]=  \EE\left [\sumn (1-\bar\Lambda_k)\Theta_k\right] \leq \frac{\delta}{2(1-\delta)}\EE[N_\epsilon]
\end{align*}
  \end{lemma}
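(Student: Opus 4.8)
The plan is to bound $N_{SS}$ pathwise by the number of \emph{false} iterations whose step size is at most $\bar\alpha$, and then to turn this into the claimed expectation bound by invoking Lemma \ref{lem:frac_of_true} followed by Lemma \ref{lem:hittime1}. Concretely, I would first establish the deterministic inequality $N_{SS}\leq\sumn(1-\Lambda_k)(1-\Theta_k)$, i.e. that the number of \emph{successful} steps with $\Alpha_k<\bar\alpha$ never exceeds the number of \emph{unsuccessful} steps with $\Alpha_k\leq\bar\alpha$. This is purely a statement about how the backtracking rule of Algorithm \ref{alg:grad_approx_ls} moves the step size, so it should hold for every realization, before any probabilistic reasoning is brought in.

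To prove that inequality I would track the integer exponent $j_k=\log_\tau(\Alpha_k/\alpha_0)$, so that, recalling $\bar\alpha=\tau^c\alpha_0$, the event $\Alpha_k<\bar\alpha$ is equivalent to $j_k\geq c+1$ and $\Alpha_k\leq\bar\alpha$ to $j_k\geq c$. A successful step sends $j_k\mapsto j_k-1$ and an unsuccessful step sends $j_k\mapsto j_k+1$. I would then introduce the truncated potential $\psi_k=\max(j_k-c,0)\geq 0$ and verify case by case that $\psi_{k+1}-\psi_k=-1$ exactly on the successful steps with $j_k\geq c+1$ (these are counted by $(1-\bar\Lambda_k)\Theta_k$), that $\psi_{k+1}-\psi_k=+1$ exactly on the unsuccessful steps with $j_k\geq c$ (counted by $(1-\Lambda_k)(1-\Theta_k)$), and that $\psi_{k+1}-\psi_k=0$ in all remaining cases. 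Telescoping, using $\psi_0=0$ (since $j_0=0<c$) and $\psi_{l+1}\geq 0$, yields $\sum_{k=0}^{l}(1-\bar\Lambda_k)\Theta_k\leq\sum_{k=0}^{l}(1-\Lambda_k)(1-\Theta_k)$ for every $l$, and in particular for $l=N_\epsilon-1$ this is the desired $N_{SS}\leq\sumn(1-\Lambda_k)(1-\Theta_k)$.

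Next I would use Assumption \ref{ass:alg_behave}(ii): since a \emph{true} iteration with $\Alpha_k\leq\bar\alpha$ is necessarily \emph{successful}, any \emph{unsuccessful} iteration with $\Alpha_k\leq\bar\alpha$ must be \emph{false}, so pathwise $(1-\Lambda_k)(1-\Theta_k)\leq(1-\Lambda_k)(1-I_k)$. Combining this with the previous step gives $N_{SS}\leq\sumn(1-\Lambda_k)(1-I_k)$. Taking expectations and applying Lemma \ref{lem:frac_of_true} with the nonnegative, $\mathcal{F}_{k-1}^{G,\mathcal{E}}$-measurable weights $W_k=1-\Lambda_k$ gives $\EE[N_{SS}]\leq\delta\,\EE[\sumn(1-\Lambda_k)]$, and Lemma \ref{lem:hittime1} bounds the remaining expectation by $\frac{1}{2(1-\delta)}\EE[N_\epsilon]$, producing exactly $\frac{\delta}{2(1-\delta)}\EE[N_\epsilon]$. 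The one delicate point is the pathwise counting lemma of the second paragraph: I must handle the boundary behavior at $\Alpha_k=\bar\alpha$ correctly and keep in mind that $j_k$ may be negative (the step size can exceed $\alpha_0$ after successful steps), which is precisely why the truncation $\max(\cdot,0)$ and the exact matching of the $\pm1$ increments of $\psi_k$ with the two indicator sums have to be set up with care.
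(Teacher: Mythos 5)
Your proposal is correct and follows essentially the same route as the paper's proof: the same chain $\sumn(1-\bar\Lambda_k)\Theta_k\leq\sumn(1-\Lambda_k)(1-\Theta_k)\leq\sumn(1-\Lambda_k)(1-I_k)$, followed by Lemma~\ref{lem:frac_of_true} with $W_k=1-\Lambda_k$ and Lemma~\ref{lem:hittime1}. The only difference is that your truncated-potential argument with $\psi_k=\max(j_k-c,0)$ carefully fills in the first (counting) inequality, which the paper justifies in a single sentence.
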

\begin{proof}{ 
We want to bound the expected number of \emph{successful} iterations for which $\alpha_k<\bar\alpha$. Since on all \emph{successful} iterations $\alpha_k$ is increased, and $\alpha_0\geq \bar\alpha$, then for each such \emph{successful} iteration there has to be an \emph{unsuccessful} iteration with $\alpha_k\leq \bar \alpha$. Hence,
\begin{align*}
 \sumn (1-\bar\Lambda_k)\Theta_k\leq \sumn (1-\Lambda_k)(1-\Theta_k)\leq \sumn (1-\Lambda_k)(1-I_k).
\end{align*}
 The last inequality follows from the fact that when $\alpha_k\leq \bar\alpha$, all \emph{true} iterations are \emph{successful}, which implies $(1-\Lambda_k)I_k\leq(1-\Lambda_k) \Theta_k$.
 Now applying Lemmas \ref{lem:frac_of_true} and \ref{lem:hittime1} we have 
\begin{align*}
 \EE\left [  \sumn (1-\Lambda_k)(1-I_k)\right]\leq  \delta \EE\left [ \sumn (1-\Lambda_k)\right ]\leq \frac{\delta}{2(1-\delta)}\EE[N_\epsilon],
\end{align*}
 from which the result follows. }
\end{proof}
 
 Our next observation is  central to our analysis. It reflects the fact that the total gain minus the total loss in $Z_k$ is bounded from above by $Z_\epsilon$.
We observe that when $\Alpha_k\geq \bar{\alpha}$ on \emph{true successful} iterations  this gain is bounded from below away from zero by 
$h(\bar \alpha)-r(\epsilon_f)\geq (1-\gamma)h(\bar \alpha)$ and at other \emph{successful} iterations the loss  is bounded above by $r(\epsilon_f)$. This will allow us to bound $\EE[N_{TS}]$. 

\begin{lemma}\label{lem:nN2}
The number of \textbf{true successful} iterations with $\Alpha_k\geq \bar\alpha$ can be bounded as,
\begin{align}\label{eq:N2_2}
	N_{TS}\leq  \frac{Z_\epsilon}{(1-\gamma)h(\bar{\alpha})} +  \frac{\gamma}{1-\gamma}(N_{FS}+N_{SS})
\end{align}
and, hence, 
\begin{align}\label{eq:EN2_2}
	\EE\left[N_{TS}\right] \leq  \frac{Z_\epsilon}{(1-\gamma)h(\bar{\alpha})} +  \frac{\gamma}{1-\gamma}\EE\left[N_{FS}\right]+\frac{\gamma}{1-\gamma}\frac{\delta}{2(1-\delta)}\EE[N_\epsilon].
\end{align}
\end{lemma}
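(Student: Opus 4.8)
The plan is to telescope the pathwise recursion \eqref{eq:proc1_Yk} for $Z_k$ along a single realization of Algorithm~\ref{alg:grad_approx_ls} and to exploit the fact that, while $k<N_\epsilon$, the progress measure is capped by $Z_\epsilon$. Concretely, I would sum the inequalities in \eqref{eq:proc1_Yk} over $k=0,\ldots,N_\epsilon-1$, so that the left-hand side telescopes to $Z_{N_\epsilon}-Z_0=Z_{N_\epsilon}$ (recall $Z_0=0$ for every row of Table~\ref{tbl:prog_upper}), and bound the right-hand side from below by splitting the successful iterations into the three disjoint classes that make up $N_{TS}$, $N_{FS}$, and $N_{SS}$.

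For the increment bounds I would argue as follows. On a \emph{true successful} iteration with $\Alpha_k\geq\bar\alpha$ (those counted by $N_{TS}$), monotonicity of $h$ gives $Z_{k+1}-Z_k\geq h(\Alpha_k)-r(\epsilon_f)\geq h(\bar\alpha)-r(\epsilon_f)\geq(1-\gamma)h(\bar\alpha)$, where the last step is Assumption~\ref{ass:alg_behave}(iv) in the form $r(\epsilon_f)\leq\gamma h(\bar\alpha)$. On every other successful iteration (those in $N_{FS}$ and $N_{SS}$, including the true small-step ones, for which the nonnegative term $h(\Alpha_k)$ is simply dropped) the worst case is $Z_{k+1}-Z_k\geq-r(\epsilon_f)$, and on unsuccessful iterations $Z_{k+1}-Z_k\geq0$. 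Summing these contributions yields
\begin{equation*}
Z_{N_\epsilon}\ \geq\ (1-\gamma)h(\bar\alpha)\,N_{TS}\ -\ r(\epsilon_f)\,(N_{FS}+N_{SS}).
\end{equation*}
Using $Z_{N_\epsilon}\leq Z_\epsilon$, rearranging for $N_{TS}$, dividing by $(1-\gamma)h(\bar\alpha)$, and applying $r(\epsilon_f)/\big((1-\gamma)h(\bar\alpha)\big)\leq\gamma/(1-\gamma)$ (again Assumption~\ref{ass:alg_behave}(iv)) would produce the pathwise bound \eqref{eq:N2_2}. Since this inequality holds for every realization and $Z_\epsilon$ is deterministic, taking expectations gives $\EE[N_{TS}]\leq Z_\epsilon/\big((1-\gamma)h(\bar\alpha)\big)+\tfrac{\gamma}{1-\gamma}\big(\EE[N_{FS}]+\EE[N_{SS}]\big)$, and substituting $\EE[N_{SS}]\leq\tfrac{\delta}{2(1-\delta)}\EE[N_\epsilon]$ from Lemma~\ref{lem:smlsuccbnd} yields \eqref{eq:EN2_2}.

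The step I expect to require the most care is the stopping-time boundary, i.e.\ justifying $Z_{N_\epsilon}\leq Z_\epsilon$. This is immediate in the nonconvex row of Table~\ref{tbl:prog_upper}, where $Z_k\leq\phi(X_0)-\hat\phi=Z_\epsilon$ for \emph{every} $k$ by Assumption~\ref{assum:low_bound}; but in the convex and strongly convex rows $Z_{N_\epsilon}$ can overshoot $Z_\epsilon$, so I would instead telescope only up to $N_\epsilon-1$, where $Z_{N_\epsilon-1}\leq Z_\epsilon$ holds by the definition of the stopping time, and account separately for the single final iteration. The other delicate point is the bookkeeping that the index sets defining $N_{TS}$, $N_{FS}$, and $N_{SS}$ partition all successful iterations without double counting, so that the loss term $-r(\epsilon_f)$ is charged exactly once per non-$N_{TS}$ successful step; this is what keeps the coefficient of $(N_{FS}+N_{SS})$ equal to $\gamma/(1-\gamma)$ and is precisely where the analysis departs from \cite{cartis2018global}, in which $Z_k$ never decreases and hence these loss terms are absent.
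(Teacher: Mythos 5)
Your proposal follows the paper's proof essentially step for step: telescope \eqref{eq:proc1_Yk}, lower-bound each \emph{true successful} increment with $\Alpha_k\geq\bar\alpha$ by $h(\bar\alpha)-r(\epsilon_f)\geq(1-\gamma)h(\bar\alpha)$, charge at most $-r(\epsilon_f)$ to every other \emph{successful} iteration, compare the accumulated total against $Z_\epsilon$, and then take expectations and invoke Lemma~\ref{lem:smlsuccbnd} to pass from \eqref{eq:N2_2} to \eqref{eq:EN2_2}. The one point where you are actually more careful than the paper is the stopping-time boundary: the paper's chain $Z_\epsilon \geq Z_k \geq N_{TS}(h(\bar\alpha)-r(\epsilon_f)) - r(\epsilon_f)(N_{FS}+N_{SS})$ silently ignores that $Z_{N_\epsilon}$ can overshoot $Z_\epsilon$ in the convex and strongly convex rows of Table~\ref{tbl:prog_upper}, and your repair (telescoping only to $N_\epsilon-1$ and treating the final iteration separately) is the honest fix, at the harmless cost of an additive $+1$ in \eqref{eq:N2_2} that propagates as an $O(1)$ shift through the subsequent lemmas.
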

\begin{proof}{The proof follows directly from \eqref{eq:proc1_Yk} and Assumption \ref{ass:alg_behave}. $Z_k$ is increased by at least 
$h(\bar{\alpha})-r(\epsilon_f)$ at each 
\emph{true successful} iteration when $\alpha_k\geq \bar \alpha$ and it may be decreased at most $r(\epsilon_f)$ at each \emph{false successful} iteration when  $\alpha_k\geq \bar \alpha$ 
and at each \emph{successful} iteration when $\alpha_k<\bar \alpha$. Thus, we have
\begin{equation*}
Z_\epsilon \geq Z_k \geq N_{TS}{(h(\bar{\alpha})-r(\epsilon_f))} -  r(\epsilon_f)(N_{FS}+N_{SS}).
\end{equation*}
Recalling that by  Assumption \ref{ass:alg_behave}, $r(\epsilon_f) \leq \gamma h(\bar{\alpha})$ and $\gamma\in (0,1)$ we obtain \eqref{eq:N2_2}, while \eqref{eq:EN2_2} follows further from Lemma \ref{lem:smlsuccbnd}. 
}
\end{proof}

\begin{lemma}\label{lem:nN3}
Under the condition that $\delta< \frac{1}{2}-\frac{\gamma}{2}$, the number of \textbf{false successful} iterations with $\Alpha_k\geq \bar\alpha$ can be bounded as,
\begin{align*}
	\EE\left [N_{FS}\right ]\leq \frac{2\delta}{1-2\delta-\gamma} \frac{Z_\epsilon}{h(\bar{\alpha})} 
+ \frac{(1-\gamma)\delta}{1-2\delta-\gamma}\log_\tau\left(\frac{\bar{\alpha}}{\alpha_0}\right) +   \frac{\delta^2\gamma}{(1-\delta)(1-2\delta-\gamma)}  \EE[N_\epsilon].
\end{align*}
\end{lemma}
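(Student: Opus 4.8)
The plan is to isolate $\EE[N_{FS}]$ from a single self-referential inequality assembled out of the lemmas already proved, the obstacle being that the natural bounds for $N_T$, $N_{TS}$ and $N_{FS}$ each feed into one another. First I would discard the factor $\Theta_k\in\{0,1\}$ to get $N_{FS}=\sumn \bar\Lambda_k(1-I_k)\Theta_k\leq \sumn \bar\Lambda_k(1-I_k)=N_F$, so that \lemref{no_of_false_lemma} yields
\begin{equation*}
\EE[N_{FS}]\leq \EE[N_F]\leq \frac{\delta}{1-\delta}\EE[N_T].
\end{equation*}
This reduces the task to controlling $\EE[N_T]$, but the bound on $N_T$ from \eqref{eq:inc_dec} involves $N_{FS}$ and $N_{TS}$, and the bound on $\EE[N_{TS}]$ in \eqref{eq:EN2_2} involves $\EE[N_{FS}]$ again; closing this loop is the whole point.

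Next I would take expectations in \eqref{eq:inc_dec} and substitute \eqref{eq:EN2_2}. Writing $C$ for the terms not proportional to $\EE[N_{FS}]$, namely $C=\frac{2Z_\epsilon}{(1-\gamma)h(\bar\alpha)}+\frac{\gamma\delta}{(1-\gamma)(1-\delta)}\EE[N_\epsilon]+\log_\tau(\bar\alpha/\alpha_0)$, the $\EE[N_{FS}]$ contributions collect with coefficient $1+\frac{2\gamma}{1-\gamma}=\frac{1+\gamma}{1-\gamma}$, giving
\begin{equation*}
\EE[N_T]\leq \frac{1+\gamma}{1-\gamma}\EE[N_{FS}]+C.
\end{equation*}
Combining this with the first display produces the single inequality $\EE[N_{FS}]\leq \frac{\delta}{1-\delta}\bigl(\frac{1+\gamma}{1-\gamma}\EE[N_{FS}]+C\bigr)$ in the unknown $\EE[N_{FS}]$.

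Moving the $\EE[N_{FS}]$ term to the left produces the coefficient $1-\frac{\delta(1+\gamma)}{(1-\delta)(1-\gamma)}=\frac{1-2\delta-\gamma}{(1-\delta)(1-\gamma)}$, and this is exactly where the hypothesis enters: the coefficient is strictly positive precisely when $\delta<\frac12-\frac\gamma2$, which is what makes the division legitimate and the resulting bound finite. Dividing gives $\EE[N_{FS}]\leq \frac{\delta(1-\gamma)}{1-2\delta-\gamma}C$, and expanding $C$ term by term reproduces the three summands of the claim, with coefficients $\frac{2\delta}{1-2\delta-\gamma}$ on $Z_\epsilon/h(\bar\alpha)$, $\frac{(1-\gamma)\delta}{1-2\delta-\gamma}$ on $\log_\tau(\bar\alpha/\alpha_0)$, and $\frac{\delta^2\gamma}{(1-\delta)(1-2\delta-\gamma)}$ on $\EE[N_\epsilon]$.

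The only genuine difficulty is the circular dependence among $N_T$, $N_{TS}$ and $N_{FS}$; the crux is the bookkeeping in collecting the $\EE[N_{FS}]$ coefficient as $\frac{1+\gamma}{1-\gamma}$ and verifying that the inversion factor $1-2\delta-\gamma$ is positive under the stated bound on $\delta$. Everything else is direct substitution of the previously established bounds \eqref{eq:inc_dec} and \eqref{eq:EN2_2} together with \lemref{no_of_false_lemma}.
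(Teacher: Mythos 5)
Your proposal is correct and follows essentially the same route as the paper's proof: bound $\EE[N_{FS}]\leq\EE[N_F]\leq\frac{\delta}{1-\delta}\EE[N_T]$ via Lemma~\ref{no_of_false_lemma}, substitute \eqref{eq:inc_dec} and then \eqref{eq:EN2_2} to obtain a self-referential inequality with coefficient $\frac{1+\gamma}{1-\gamma}$ on $\EE[N_{FS}]$, and invert using the identity $1-\frac{1+\gamma}{1-\gamma}\frac{\delta}{1-\delta}=\frac{1-2\delta-\gamma}{(1-\gamma)(1-\delta)}$, which is positive exactly under the hypothesis $\delta<\frac{1}{2}-\frac{\gamma}{2}$. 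The algebra and all resulting coefficients match the paper's argument.
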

\begin{proof}{
From \eqref{eq:inc_dec} and Lemma \ref{no_of_false_lemma}, we have
\begin{align*}
	\EE\left [N_{FS}\right ]\leq  \EE\left [N_{F}\right ]\leq \frac{\delta}{1-\delta}\left[ \EE\left[N_{FS}\right]+2\EE\left[N_{TS}\right] + \log_\tau\left(\frac{\bar{\alpha}}{\alpha_0}\right)\right ].
\end{align*}
Then, from Lemma \ref{lem:nN2} if follows that
\begin{align*}
	\EE\left[N_{FS}\right]\leq  \frac{\delta}{1-\delta}\left[  \frac{1+\gamma}{1-\gamma}\EE\left[N_{FS}\right]+\frac{2Z_\epsilon}{(1-\gamma)h(\bar{\alpha})}  +\frac{\gamma}{1-\gamma}\frac{\delta}{1-\delta}\EE[N_\epsilon]+ \log_\tau\left(\frac{\bar{\alpha}}{\alpha_0}\right)\right ].
\end{align*}
Collecting the terms involving $\EE\left[N_{FS}\right]$ on the left and observing that $1-\frac{1+\gamma}{1-\gamma}\frac{\delta}{1-\delta}=\frac{1-2\delta-\gamma}{(1-\gamma)(1-\delta)}$ we can derive the bound
\begin{align*}
	\EE\left[N_{FS}\right ]\leq  \frac{(1-\gamma)\delta}{1-2\delta-\gamma}\left[ \frac{2Z_\epsilon}{(1-\gamma)h(\bar{\alpha})}  +\frac{\gamma}{1-\gamma}\frac{\delta}{1-\delta}\EE[N_\epsilon]+ \log_\tau\left(\frac{\bar{\alpha}}{\alpha_0}\right)\right ],
\end{align*}
from which the result follows.
}
\end{proof}
We can now derive the bound for $\EE\left [N_{TS}\right ]$  using Lemmas \ref{lem:nN2} and \ref{lem:nN3} and collecting the appropriate terms. 
\begin{lemma}\label{lem:nN4} Under the condition that $\delta< \frac{1}{2}-\frac{\gamma}{2}$, the number of \textbf{true successful} iterations with $\Alpha_k\geq \bar\alpha$ can be bounded as,
\begin{align*}
	\EE\left [N_{TS}\right]\leq    \frac{1-2\delta}{1-2\delta-\gamma}\frac{Z_\epsilon}{h(\bar{\alpha})} + \frac{\gamma \delta}{1-2\delta-\gamma}\log_\tau\left(\frac{\bar{\alpha}}{\alpha_0}\right) +  \frac{\gamma(1-2\delta)\delta}{2(1-\delta)(1-2\delta-\gamma)}  \EE[N_\epsilon]. 
\end{align*}
\end{lemma}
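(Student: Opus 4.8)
The plan is to substitute the bound on $\EE[N_{FS}]$ from Lemma \ref{lem:nN3} directly into the bound on $\EE[N_{TS}]$ from Lemma \ref{lem:nN2}, and then collect the three types of terms that appear: those proportional to $Z_\epsilon / h(\bar{\alpha})$, those proportional to $\log_\tau(\bar{\alpha}/\alpha_0)$, and those proportional to $\EE[N_\epsilon]$. The condition $\delta < \frac{1}{2} - \frac{\gamma}{2}$ guarantees $1 - 2\delta - \gamma > 0$, so every denominator arising from Lemma \ref{lem:nN3} is positive and the upper bound may be substituted into the inequality of Lemma \ref{lem:nN2} without any reversal of sign.

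First I would write the bound in Lemma \ref{lem:nN2} as
\begin{align*}
\EE\left[N_{TS}\right] \leq \frac{Z_\epsilon}{(1-\gamma)h(\bar{\alpha})} + \frac{\gamma}{1-\gamma}\EE\left[N_{FS}\right] + \frac{\gamma}{1-\gamma}\frac{\delta}{2(1-\delta)}\EE[N_\epsilon],
\end{align*}
and then replace $\EE[N_{FS}]$ by its explicit upper bound from Lemma \ref{lem:nN3}. This yields a single inequality bounding $\EE[N_{TS}]$ by a fixed linear combination of $Z_\epsilon/h(\bar{\alpha})$, $\log_\tau(\bar{\alpha}/\alpha_0)$ and $\EE[N_\epsilon]$, with no remaining self-reference, since Lemma \ref{lem:nN3} has already resolved the implicit dependence of $\EE[N_{FS}]$ on itself.

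The remaining work is pure coefficient bookkeeping, and the single algebraic identity that makes the coefficients collapse into the stated form is
\begin{align*}
(1-\gamma)(1-2\delta) = 1 - 2\delta - \gamma + 2\gamma\delta.
\end{align*}
For the $Z_\epsilon/h(\bar{\alpha})$ coefficient, combining $\frac{1}{1-\gamma}$ with $\frac{\gamma}{1-\gamma}\cdot\frac{2\delta}{1-2\delta-\gamma}$ over a common denominator and applying this identity to the numerator produces $\frac{1-2\delta}{1-2\delta-\gamma}$. For the logarithmic term, the factor $(1-\gamma)$ in the numerator of Lemma \ref{lem:nN3} cancels the prefactor $\frac{1}{1-\gamma}$, leaving $\frac{\gamma\delta}{1-2\delta-\gamma}$. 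For the $\EE[N_\epsilon]$ coefficient, I would factor $\frac{\gamma\delta}{(1-\gamma)(1-\delta)}$ out of the two contributing terms, simplify the bracket $\frac{\gamma\delta}{1-2\delta-\gamma} + \frac{1}{2}$ to $\frac{(1-\gamma)(1-2\delta)}{2(1-2\delta-\gamma)}$ by the same identity, and cancel $(1-\gamma)$ to obtain $\frac{\gamma(1-2\delta)\delta}{2(1-\delta)(1-2\delta-\gamma)}$.

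I do not expect any genuine obstacle: the result is a direct corollary of the two preceding lemmas. The only point requiring care is that the recursion between $\EE[N_{FS}]$ and $\EE[N_{TS}]$ has already been eliminated inside Lemma \ref{lem:nN3}, so the substitution here is one-directional and no implicit inequality needs to be solved again; the hypothesis $\delta < \frac{1}{2} - \frac{\gamma}{2}$ is precisely what keeps every denominator positive throughout the simplification.
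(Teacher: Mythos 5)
Your proposal is correct and matches the paper's own proof: the paper likewise substitutes the bound on $\EE[N_{FS}]$ from Lemma \ref{lem:nN3} into the inequality of Lemma \ref{lem:nN2} and collects the coefficients of $Z_\epsilon/h(\bar{\alpha})$, $\log_\tau(\bar{\alpha}/\alpha_0)$, and $\EE[N_\epsilon]$. Your identity $(1-\gamma)(1-2\delta)=1-2\delta-\gamma+2\gamma\delta$ is exactly the simplification underlying the paper's final algebraic step, and all three resulting coefficients check out.
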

\begin{proof}{ From Lemma \ref{lem:nN2}, we have
\begin{equation*}
\EE\left[N_{TS}\right] \leq  \frac{Z_\epsilon}{(1-\gamma)h(\bar{\alpha})} +  \frac{\gamma}{1-\gamma}\EE\left[N_{FS}\right]+\frac{\gamma}{1-\gamma}\frac{\delta}{2(1-\delta)}\EE[N_\epsilon].
\end{equation*}
Using the result from Lemma \ref{lem:nN3}, it follows that
\begin{align*}
	&\ \EE\left[N_{TS}\right] \\
	 \leq & \frac{Z_\epsilon}{(1-\gamma)h(\bar{\alpha})} \\
	& \ +  \frac{\gamma}{1-\gamma}\left[ \frac{2\delta}{1-2\delta-\gamma} \frac{Z_\epsilon}{h(\bar{\alpha})} 
+ \frac{(1-\gamma)\delta}{1-2\delta-\gamma}\log_\tau\left(\frac{\bar{\alpha}}{\alpha_0}\right) +   \frac{\delta^2\gamma}{(1-\delta)(1-2\delta-\gamma)}  \EE[N_\epsilon]\right] \\
	  & \ +\frac{\gamma}{1-\gamma}\frac{\delta}{2(1-\delta)}\EE[N_\epsilon]\\
	 = & \frac{1-2\delta}{1-2\delta-\gamma}\frac{Z_\epsilon}{h(\bar{\alpha})} + \frac{\gamma\delta}{1-2\delta-\gamma}\log_\tau\left(\frac{\bar{\alpha}}{\alpha_0}\right) +  \frac{\gamma(1-2\delta)\delta}{2(1-\delta)(1-2\delta-\gamma)}  \EE[N_\epsilon],
\end{align*}
which completes the proof.
}
\end{proof}

\begin{lemma}\label{lem:hittime2}
Under the condition that  $\delta< \frac{1}{2}-\frac{\gamma}{2}$, the number of  iterations with $\Alpha_k > \bar\alpha$ can be bounded as,
\begin{align*}
	\EE\left [\sumn \Lambda_k \right ] 
	\leq &  \frac{2}{1-2\delta-\gamma} \frac{Z_\epsilon}{h(\bar{\alpha})} + \frac{(1-\gamma)}{1-2\delta-\gamma}\log_\tau\left(\frac{\bar{\alpha}}{\alpha_0}\right)+ \frac{\gamma \delta}{(1-\delta)(1-2\delta-\gamma)}\EE[N_\epsilon].
\end{align*}
\end{lemma}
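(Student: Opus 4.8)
The plan is to combine the preliminary estimate \eqref{eq:prelimbound} with the explicit bounds on $\EE[N_{FS}]$ and $\EE[N_{TS}]$ already established in \lemref{nN3} and \lemref{nN4}; no new probabilistic argument is needed here, since all the work involving the submartingale structure has been absorbed into those two lemmas. First I would recall from \eqref{eq:false_true} together with \eqref{eq:prelimbound} that
\[
\EE\left[\sumn \Lambda_k\right] \leq \frac{1}{1-\delta}\left(\EE[N_{FS}] + 2\EE[N_{TS}] + \log_\tau\left(\frac{\bar\alpha}{\alpha_0}\right)\right),
\]
so that it suffices to control the single quantity $\EE[N_{FS}] + 2\EE[N_{TS}]$ and then divide through by $1-\delta$.

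Next I would substitute the bounds from \lemref{nN3} and \lemref{nN4} and group the resulting expression by its three constituent terms, namely $Z_\epsilon/h(\bar\alpha)$, $\log_\tau(\bar\alpha/\alpha_0)$, and $\EE[N_\epsilon]$. The coefficient of $Z_\epsilon/h(\bar\alpha)$ in $\EE[N_{FS}] + 2\EE[N_{TS}]$ becomes $\frac{2\delta + 2(1-2\delta)}{1-2\delta-\gamma} = \frac{2(1-\delta)}{1-2\delta-\gamma}$, while the $\EE[N_\epsilon]$ coefficient simplifies because $\delta^2\gamma + \gamma(1-2\delta)\delta = \gamma\delta(1-\delta)$, so the factor $1-\delta$ cancels and leaves $\frac{\gamma\delta}{1-2\delta-\gamma}$.

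Then I would add back the stray $\log_\tau(\bar\alpha/\alpha_0)$ term coming from \eqref{eq:prelimbound}. Its combined coefficient is $\frac{\delta(1+\gamma)}{1-2\delta-\gamma} + 1$, which after placing over the common denominator factors as $\frac{(1-\delta)(1-\gamma)}{1-2\delta-\gamma}$. Multiplying the whole expression by $\frac{1}{1-\delta}$ then cancels the $(1-\delta)$ factors appearing in the $Z_\epsilon/h(\bar\alpha)$ and $\log_\tau$ coefficients and reproduces exactly the claimed bound, with leading coefficients $\frac{2}{1-2\delta-\gamma}$, $\frac{1-\gamma}{1-2\delta-\gamma}$, and $\frac{\gamma\delta}{(1-\delta)(1-2\delta-\gamma)}$ respectively.

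The argument is entirely algebraic, so the only genuine obstacle is the bookkeeping: keeping track of the several fractions sharing the denominator $1-2\delta-\gamma$ and spotting the two cancellations that make the final coefficients clean. The hypothesis $\delta < \frac{1}{2} - \frac{\gamma}{2}$ is exactly what guarantees $1-2\delta-\gamma > 0$, so that all denominators stay positive and every inequality is preserved in the correct direction when the bounds of \lemref{nN3} and \lemref{nN4} are inserted.
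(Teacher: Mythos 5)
Your proposal is correct and follows exactly the paper's own proof: start from \eqref{eq:false_true} and \eqref{eq:prelimbound}, substitute the bounds of Lemmas \ref{lem:nN3} and \ref{lem:nN4}, and collect coefficients of $Z_\epsilon/h(\bar{\alpha})$, $\log_\tau(\bar{\alpha}/\alpha_0)$, and $\EE[N_\epsilon]$. All three of your coefficient computations (including the factorizations $2\delta+2(1-2\delta)=2(1-\delta)$, $\delta^2\gamma+\gamma(1-2\delta)\delta=\gamma\delta(1-\delta)$, and $\frac{\delta(1+\gamma)}{1-2\delta-\gamma}+1=\frac{(1-\delta)(1-\gamma)}{1-2\delta-\gamma}$) check out and match the paper's final expression.
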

\begin{proof}{
By \eqref{eq:false_true}, \eqref{eq:prelimbound} and Lemmas \ref{lem:nN3} and \ref{lem:nN4}, we have 
\begin{align*}
	&\ \EE\left [\sumn \Lambda_k\right ] \leq\EE[N_{F}]+\EE[N_{T}]\\
	\leq & \frac{1}{1- \delta}\left[\EE[N_{FS}]+2\EE[N_{TS}] + \log_\tau(\bar{\alpha}/\alpha_0)\right] \nonumber\\
	\leq &  \frac{1}{1-\delta} \left[ \frac{2\delta}{1-2\delta-\gamma} \frac{Z_\epsilon}{h(\bar{\alpha})} 
+ \frac{(1-\gamma)\delta}{1-2\delta-\gamma}\log_\tau\left(\frac{\bar{\alpha}}{\alpha_0}\right) +   \frac{\delta^2\gamma}{(1-\delta)(1-2\delta-\gamma)}  \EE[N_\epsilon] \right]\nonumber \\
& + \frac{2}{1-\delta} \left[ \frac{1-2\delta}{1-2\delta-\gamma}\frac{Z_\epsilon}{h(\bar{\alpha})} + \frac{\gamma \delta}{1-2\delta-\gamma}\log_\tau\left(\frac{\bar{\alpha}}{\alpha_0}\right) +  \frac{\gamma(1-2\delta)\delta}{2(1-\delta)(1-2\delta-\gamma)}  \EE[N_\epsilon] \right]\nonumber \\
& + \frac{1}{1-\delta} \log_\tau\left(\frac{\bar{\alpha}}{\alpha_0}\right)\nonumber \\
 = &\frac{2}{1-2\delta-\gamma} \frac{Z_\epsilon}{h(\bar{\alpha})} + \frac{(1-\gamma)}{1-2\delta-\gamma}\log_\tau\left(\frac{\bar{\alpha}}{\alpha_0}\right)+ \frac{\gamma \delta}{(1-\delta)(1-2\delta-\gamma)}\EE[N_\epsilon],\nonumber
\end{align*}
which completes the proof.
}
\end{proof}

Combining Lemmas \ref{lem:hittime1} and \ref{lem:hittime2}, we have the key bound
\begin{align*}
	\EE\left [ N_\epsilon\right ] \leq & \EE\left [\sumn \Lambda_k\right ]+\EE\left [\sumn (1- \Lambda_k)\right ]\\
  \leq & \frac{2}{1-2\delta-\gamma} \frac{Z_\epsilon}{h(\bar{\alpha})} + \frac{(1-\gamma)}{1-2\delta-\gamma}\log_\tau\left(\frac{\bar{\alpha}}{\alpha_0}\right) \\
  & + \frac{\gamma \delta}{(1-\delta)(1-2\delta-\gamma)}\EE[N_\epsilon] + \frac{1}{2(1-\delta)}\EE[N_\epsilon].
\end{align*}

Collecting the terms with $\EE\left [ N_\epsilon\right ]$ on the left-hand side and multiplying both sides by $1-2\delta-\gamma$ we obtain 
\begin{align*}
	\left[ 1-2\delta-\gamma - \frac{\gamma \delta}{1-\delta} - \frac{1-2\delta-\gamma}{2(1-\delta)}\right]\EE\left [ N_\epsilon\right ]\leq   \frac{2Z_\epsilon}{h(\bar{\alpha})} + (1-\gamma)\log_\tau\left(\frac{\bar{\alpha}}{\alpha_0}\right)
\end{align*}
If the coefficient in front of $\EE\left [ N_\epsilon\right ]$ is positive, that immediately gives us a bound on the expected stopping time $\EE\left [ N_\epsilon\right ]$. 
This coefficient is 
\begin{align*}
1-2\delta-\gamma - \frac{\gamma \delta}{1-\delta} - \frac{1-2\delta-\gamma}{2(1-\delta)}=\frac{4\delta^2-4\delta+1-\gamma}{2(1-\delta)} = \frac{(1 - 2\delta)^2-\gamma}{2(1-\delta)}. 
\end{align*}

The smaller of the two roots of $4\delta^2-4\delta+1-\gamma$ is $\frac{1}{2}-\frac{\sqrt{\gamma}}{2} \leq \frac{1}{2}-\frac{\gamma}{2} $. Hence, we have the following final bound.

\begin{theorem} \label{th:mainbound}
Under the condition that $\delta< \frac{1}{2}-\frac{\sqrt{\gamma}}{2}$, the stopping time $N_\epsilon$ is bounded in expectation as follows
\begin{align}	\label{eq:final_bound}
	\EE [N_\epsilon] \leq \frac{2(1-\delta)}{(1 - 2\delta)^2-\gamma}\left [\frac{2Z_\epsilon}{h(\bar{\alpha})} +(1-\gamma)\log_\tau\left (\frac{\bar{\alpha}}{\alpha_0}\right)\right]
\end{align}
\end{theorem}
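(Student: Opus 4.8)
The plan is to observe that essentially all of the work has already been distilled into \lemref{hittime1} and \lemref{hittime2}, so the proof of the theorem itself reduces to a single algebraic combination. Every iteration either has $\Alpha_k > \bar\alpha$ (counted by $\Lambda_k$) or $\Alpha_k \le \bar\alpha$ (counted by $1-\Lambda_k$), so I would first write $\EE[N_\epsilon] = \EE[\sumn \Lambda_k] + \EE[\sumn(1-\Lambda_k)]$ and substitute the two bounds. \lemref{hittime1} controls the small-step term by $\frac{1}{2(1-\delta)}\EE[N_\epsilon]$, while \lemref{hittime2} controls the large-step term by an expression linear in $Z_\epsilon/h(\bar\alpha)$, $\log_\tau(\bar\alpha/\alpha_0)$, and $\EE[N_\epsilon]$. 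The key point is that both bounds carry $\EE[N_\epsilon]$ on their right-hand sides, so the resulting inequality is implicit in $\EE[N_\epsilon]$.

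Next I would collect all $\EE[N_\epsilon]$ terms on the left-hand side and clear the common factor $1-2\delta-\gamma$ appearing in the denominators inherited from \lemref{hittime2}. Multiplying through by $1-2\delta-\gamma$ turns the $Z_\epsilon$- and $\log_\tau$-coefficients into the clean constants $2$ and $1-\gamma$, and leaves a single scalar multiplying $\EE[N_\epsilon]$. A short computation, bringing everything over the common denominator $2(1-\delta)$, simplifies this scalar to $\frac{4\delta^2 - 4\delta + 1 - \gamma}{2(1-\delta)} = \frac{(1-2\delta)^2 - \gamma}{2(1-\delta)}$.

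The decisive step is to verify that this coefficient is strictly positive, since only then can I divide by it to obtain a finite bound with the correct sign. The numerator is a quadratic in $\delta$ with roots $\tfrac{1\pm\sqrt\gamma}{2}$, hence positive exactly when $\delta < \tfrac12 - \tfrac{\sqrt\gamma}{2}$; combined with $1-\delta>0$ this yields positivity of the entire coefficient. Dividing both sides by $\frac{(1-2\delta)^2-\gamma}{2(1-\delta)}$ then produces \eqref{eq:final_bound}.

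I expect the main subtlety, rather than a genuine obstacle, to be the strengthening of the hypothesis on $\delta$. The preceding lemmas require only $\delta < \tfrac12 - \tfrac\gamma2$, which guarantees $1-2\delta-\gamma>0$ so that multiplying through preserves the inequality direction. However, positivity of the \emph{final} coefficient needs the strictly stronger threshold $\delta < \tfrac12 - \tfrac{\sqrt\gamma}{2}$ (note $\sqrt\gamma > \gamma$ for $\gamma\in(0,1)$). It is worth checking that this stronger condition still implies the weaker one: from $\delta < \tfrac12 - \tfrac{\sqrt\gamma}{2}$ we get $1-2\delta > \sqrt\gamma \ge \gamma$, so $1-2\delta-\gamma>0$ and the earlier multiplication step remains valid. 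Recognizing that the $\sqrt\gamma$-threshold is the operative one is the only place requiring real care.
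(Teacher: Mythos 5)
Your proposal is correct and follows essentially the same route as the paper: combine Lemmas~\ref{lem:hittime1} and~\ref{lem:hittime2} via the decomposition $\EE[N_\epsilon] = \EE[\sumn \Lambda_k] + \EE[\sumn (1-\Lambda_k)]$, collect the $\EE[N_\epsilon]$ terms, multiply by $1-2\delta-\gamma$, and simplify the resulting coefficient to $\frac{(1-2\delta)^2-\gamma}{2(1-\delta)}$, whose positivity is exactly the condition $\delta < \tfrac12 - \tfrac{\sqrt\gamma}{2}$. Your observation that the $\sqrt\gamma$-threshold implies the weaker $\gamma$-threshold needed by the preceding lemmas matches the paper's remark that the smaller root $\tfrac12-\tfrac{\sqrt\gamma}{2}$ is at most $\tfrac12-\tfrac{\gamma}{2}$.
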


\begin{remark} 
The result of Theorem \ref{th:mainbound} is a generalization of the result in \cite{cartis2018global} to the case where the function is computed with some noise. Specifically, when $\epsilon_f = 0$, and as a result $r(\epsilon_f) = 0$, then $\gamma = 0$ and \eqref{eq:final_bound} reduces to the bounds in \cite{cartis2018global}. We should note that the condition $\delta <\frac{1}{2}$ corresponds to the condition $p>\frac{1}{2}$ in \cite{cartis2018global}. If, on the other hand $\delta=0$, then we recover the deterministic complexity bound. If  $r(\epsilon_f) > 0$ the quantity $\gamma$ can be chosen to be some fixed constant, for example $\frac{1}{2}$. This implies the condition that $\delta<\frac{1}{2}(1-\frac{1}{\sqrt{2}})$ and the bound \eqref{eq:final_bound} is adjusted accordingly. We see that larger values of $\gamma$ imply tighter bounds on $\delta$, on the other hand as we will see in the next section, they allow the algorithm to achieve better accuracy for the same noise level $\epsilon_f$. Thus, the constant $\gamma$ simply serves as a means to highlight the trade-off between imposing smaller bounds on $\delta$ and achieving a smaller  radius of convergence. 
\end{remark}

\section{Convergence Analysis of the Modified Line Search} \label{convergence_analysis}

\ab{In this section, we derive expected complexity bounds for the modified line search Algorithm \ref{alg:grad_approx_ls}, where the step size parameter is chosen using Algorithm \ref{alg:ls_sub}. 

We begin by stating the first condition on the gradient estimates which we use in our analysis,
\begin{align}	\label{eq:theta_cond}
	\| \ab{g_k} - \nabla \phi(x_k) \| \leq \theta \|  \nabla \phi(x_k) \|,  \ \ \text{for all} \ \ k=0, 1, 2,\ldots,
\end{align}
for some $\theta \in [0,1)$. This condition is referred to as a \emph{norm condition} and was introduced and studied in \cite{carter1991global} in the context of trust-region methods with inaccurate gradients. Note, this condition implies that \ab{$g_k$} is a descent direction for the function $\phi$. When unbiased  stochastic estimators  of $\nabla \phi(x)$ are available,  \ab{$g_k$} can be computed by averaging these estimators.  If the variance of these estimators is bounded by ${\cal O}(\| \nabla \phi(x_k) \| ^2)$, then condition \eqref{eq:theta_cond}  can be satisfied, with probability $1-\delta$, by using  a sufficiently large number of the estimators (batch size) to compute \ab{$g_k$}.  We chose to consider condition \eqref{eq:theta_cond} because we are motivated by the specific setting where estimates $g_k$ are computed via finite differences, interpolation or smoothing \cite{berahas2019derivative,berahas2019theoretical,nesterov2017random,fazel2018global,ConnScheVice08c}. 

In a more general stochastic setting, unless one knows $\|  \nabla \phi(x_k) \|$, condition \eqref{eq:theta_cond} is hard or impossible to verify or guarantee. A simple way of making condition \eqref{eq:theta_cond} realizable is to replace  $ \| \nabla \phi(x_k) \|$ with $\epsilon$, where $\epsilon$ is the desired convergence accuracy. However, if the cost of obtaining \ab{$g_k$} that satisfies  $\| \ab{g_k} - \nabla \phi(x_k) \| \leq \theta \epsilon$ increases as $\epsilon$ decreases, replacing $\|  \nabla \phi(x_k) \|$ by its global lower bound $\epsilon$ can lead to inefficient algorithms.

There is significant amount of work that attempts to circumvent the aforementioned difficulties in the case of general stochastic gradient estimates; see e.g., \cite{byrd2012sample,cartis2018global,paquette2018stochastic}. In \cite{byrd2012sample} a practical approach to estimate $\|  \nabla \phi(x_k) \|$ is proposed  and  used to ensure some approximation of \eqref{eq:theta_cond} holds. In \cite{cartis2018global} and \cite{paquette2018stochastic}, \eqref{eq:theta_cond} is replaced with a condition that for some $\kappa \geq 0$,
\begin{align}\label{eq:alpha_cond}
	\| \ab{g_k} - \nabla \phi(x_k) \| \leq \kappa\alpha_k \|  \ab{g_k} \|,  \ \ \text{for all} \ \ k=0, 1, 2,\ldots, 
\end{align}
 holds with probability $1-\delta$ and it is discussed how this condition can be ensured. Under this condition,  expected complexity bounds  are derived for a line search method that has access to deterministic function values in \cite{cartis2018global} and stochastic function values (with additional assumptions)  in  \cite{paquette2018stochastic}.  While this condition does not require the variance to diminish with $\|  \nabla \phi(x_k) \|$, it may be hard or impossible to ensure when $\alpha_k$ is very small, due to the noise. Thus, we propose the following modification of this condition,
\begin{align}\label{eq:alpha_cond_mod}
	\| \ab{g_k} - \nabla \phi(x_k) \| \leq \max \{ {\zeta \epsilon_g},\kappa\alpha_k \|  \ab{g_k} \| \},  \ \ \text{for all} \ \ k=0, 1, 2,\ldots,
\end{align}
{where $\zeta > 1$ and $\epsilon_g \geq 0$ (we precisely define $\epsilon_g$ in Section \ref{sec:conv_alpha}.)}
We extend the analysis in \cite{cartis2018global} and derive complexity bounds based on \eqref{eq:alpha_cond_mod} for our setting (i.e., noisy function evaluations).

In the remainder of this section, we present a convergence analysis for the generic line search algorithm (Algorithms \ref{alg:grad_approx_ls}-\ref{alg:ls_sub}). The analysis is an extension of the analysis presented in \cite{cartis2018global} to the case where functions are computed with noise (Assumption \ref{assum:func_bnd}). We first consider the \emph{norm condition} \eqref{eq:theta_cond}, and prove complexity guarantees for the special case where $d_k=-g_k$ (Section \ref{sec:conv_norm}) and general descent (Section \ref{sec.gen_descent}). We then prove similar results for condition \eqref{eq:alpha_cond_mod} (Section \ref{sec:conv_alpha}). For brevity we omit the results for general descent under condition \eqref{eq:alpha_cond_mod} as these results are very similar to those for \eqref{eq:theta_cond}.}

\subsection{Convergence under Condition \eqref{eq:theta_cond}}
\label{sec:conv_norm}
We use the following notion of \emph{sufficiently accurate gradients}.
\begin{definition} \label{def:suff_acc}
A sequence of random gradients $\{G_k\}$ is $(1-\delta)$-probabilistically ``sufficiently accurate'' for Algorithm \ref{alg:grad_approx_ls} 
 if there exists a constant $\theta \in [0,\tfrac{1-c_1}{2-c_1})$, such that the indicator variables
\begin{align*}
	I_k = \mathbbm{1}\{ \| G_k - \nabla \phi(X_k) \| \leq \theta \| \nabla \phi(X_k)\|\}
\end{align*}
satisfy the following submartingale condition
\begin{align*}
	\mathbb{P}(I_k = 1 | \mathcal{F}_{k-1}^{G, \mathcal{E}} ) \geq 1 - \delta,
\end{align*}
\ab{for all realizations of $\mathcal{F}_{k-1}^{G, \mathcal{E}}$, }where $\mathcal{F}_{k-1}^{G, \mathcal{E}}  = \sigma(G_0,\ldots,G_{k-1},\mathcal{E}_{k-1})$ is the $\sigma$-algebra generated by $G_0,\ldots,G_{k-1}$ and  
$\mathcal{E}_{k-1}$. Moreover, we say that iteration $k$ is a \textbf{true} iteration if the event $I_k = 1$ occurs, otherwise the iteration is called \textbf{false}.
\end{definition}
For the remainder of this section, we make the following additional assumption.
\begin{assumption} \label{assum:accurate} \textbf{(Sufficiently accurate gradients)} The sequence of random gradients $\{G_k\}$ generated by Algorithm \ref{alg:grad_approx_ls} are $(1-\delta)$-probabilistically ``sufficiently accurate'' with $\delta < \frac{1}{2}-\frac{\sqrt{\gamma}}{2}$, for some $\gamma\in (0,1)$.
\end{assumption}


Equipped with the above definitions, assumptions and theorems, we now provide convergence guarantees for the generic line search algorithm (Algorithm \ref{alg:grad_approx_ls}-\ref{alg:ls_sub}), for convex, strongly convex and nonconvex objective functions. \change{We remind the reader of the definition of the stopping time $N_\epsilon$ given in Definition \ref{def.stop}.}


For each \emph{true} iteration (i.e., $I_k = 1$), we have
\begin{align*}	
	\| \ab{g_k} - \nabla \phi(x_k) \| \leq \theta \|  \nabla \phi(x_k) \|,
\end{align*}
which implies, using the triangle inequality that
\begin{align} \label{eq:boundgk}
	\| \ab{g_k}\| \geq (1-\theta) \| \nabla \phi(x_k)\|.
\end{align}

We now show that Assumption \ref{ass:alg_behave} is satisfied. To this end, for the three classes of functions, we show that there exists an upper bound $\bar{\alpha}$ on the step length parameter, and functions $h(\alpha)$ and $r(\epsilon_f)$ such that the assumption is true. First, we derive an expression for the constant $\bar{\alpha}$. 
 
\begin{lemma}\label{stepsize_threshold_lemma1}
Let Assumptions \ref{assum:lip_cont} and \ref{assum:func_bnd}  hold. For every realization  of Algorithm \ref{alg:grad_approx_ls},  if iteration $k$ is \textbf{true} (i.e., $I_k=1$), and if 
\begin{equation}\label{alpha_bd1}
	\alpha_k \leq \bar{\alpha} = \frac{2(1-2\theta - c_1(1-\theta))}{L(1-\theta)},
\end{equation}
then \eqref{eq:Armijo} holds. In other words, when \eqref{alpha_bd1} holds, any \textbf{true} iteration is also a \textbf{successful} iteration. Moreover, for every \textbf{true} and \textbf{successful} iteration,
\begin{align}		\label{eq.decrease_phi}
	\phi(x_{k+1}) &\leq \phi(x_k) - c_1 \alpha_k (1-\theta)^2\| \nabla \phi(x_k) \|^2 + 4\epsilon_f.
\end{align}
\end{lemma}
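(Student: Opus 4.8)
The plan is to treat the two claims in turn, in both cases reducing everything to a noise-free sufficient-decrease inequality for $\phi$ and then using the bound $|f-\phi|\leq\epsilon_f$ from Assumption~\ref{assum:func_bnd} to pass between $f$ and $\phi$. Throughout I work on a \emph{true} iteration, where the norm condition $\|g_k-\nabla\phi(x_k)\|\leq\theta\|\nabla\phi(x_k)\|$ holds; this yields \eqref{eq:boundgk}, i.e. $\|g_k\|\geq(1-\theta)\|\nabla\phi(x_k)\|$, and in particular $\|\nabla\phi(x_k)\|\leq\|g_k\|/(1-\theta)$. I may assume $g_k\neq 0$, since on a true iteration $g_k=0$ forces $\nabla\phi(x_k)=0$ (as $\theta<1$) and all the inequalities hold trivially.

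For the first claim I use $d_k=-g_k$, so $d_k^Tg_k=-\|g_k\|^2$ and the modified Armijo test \eqref{eq:Armijo} reads $f(x_k-\alpha_k g_k,\xi)\leq f(x_k,\xi)-c_1\alpha_k\|g_k\|^2+2\epsilon_f$. Bounding the left-hand side above and the right-hand side below through $|f-\phi|\leq\epsilon_f$, it suffices to establish the deterministic inequality
\begin{equation*}
\phi(x_k-\alpha_k g_k)\leq \phi(x_k)-c_1\alpha_k\|g_k\|^2.
\end{equation*}
To this end I apply the descent lemma from $L$-smoothness (Assumption~\ref{assum:lip_cont}), namely $\phi(x_k-\alpha_k g_k)\leq\phi(x_k)-\alpha_k\nabla\phi(x_k)^Tg_k+\tfrac{L}{2}\alpha_k^2\|g_k\|^2$, after which (dividing by $\alpha_k>0$) the target reduces to $\tfrac{L}{2}\alpha_k\|g_k\|^2\leq \nabla\phi(x_k)^Tg_k-c_1\|g_k\|^2$.

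The crucial step, and the one I expect to be the main obstacle, is to lower bound $\nabla\phi(x_k)^Tg_k$ by a multiple of $\|g_k\|^2$ rather than of $\|\nabla\phi(x_k)\|^2$, since only this choice reproduces exactly the constant in \eqref{alpha_bd1}. Writing $\nabla\phi(x_k)=g_k-(g_k-\nabla\phi(x_k))$ and using Cauchy--Schwarz together with $\|g_k-\nabla\phi(x_k)\|\leq\theta\|\nabla\phi(x_k)\|\leq\tfrac{\theta}{1-\theta}\|g_k\|$ gives
\begin{equation*}
\nabla\phi(x_k)^Tg_k\geq\|g_k\|^2-\tfrac{\theta}{1-\theta}\|g_k\|^2=\tfrac{1-2\theta}{1-\theta}\|g_k\|^2.
\end{equation*}
Substituting, the required inequality becomes $\tfrac{L}{2}\alpha_k\leq\tfrac{1-2\theta}{1-\theta}-c_1=\tfrac{1-2\theta-c_1(1-\theta)}{1-\theta}$, i.e. precisely $\alpha_k\leq\bar\alpha$ with $\bar\alpha$ as in \eqref{alpha_bd1}; positivity of $\bar\alpha$ follows from the restriction $\theta<\tfrac{1-c_1}{2-c_1}$ in Definition~\ref{def:suff_acc}, which is equivalent to $1-2\theta-c_1(1-\theta)>0$.

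For the second claim, on a \emph{true} and \emph{successful} iteration we have $x_{k+1}=x_k-\alpha_k g_k$ and \eqref{eq:Armijo} holds. Applying $|f-\phi|\leq\epsilon_f$ on both sides yields $\phi(x_{k+1})\leq f(x_{k+1},\xi)+\epsilon_f\leq f(x_k,\xi)-c_1\alpha_k\|g_k\|^2+3\epsilon_f\leq\phi(x_k)-c_1\alpha_k\|g_k\|^2+4\epsilon_f$, and then \eqref{eq:boundgk} in the form $\|g_k\|^2\geq(1-\theta)^2\|\nabla\phi(x_k)\|^2$ delivers \eqref{eq.decrease_phi}. Apart from the sharp inner-product bound above, the remaining work is routine bookkeeping with the $\pm\epsilon_f$ terms.
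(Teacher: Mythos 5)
Your proof is correct and takes essentially the same route as the paper's: the same descent-lemma argument combined with the same Cauchy--Schwarz bound $\|g_k-\nabla\phi(x_k)\|\leq\theta\|\nabla\phi(x_k)\|\leq\tfrac{\theta}{1-\theta}\|g_k\|$ producing the factor $\tfrac{1-2\theta}{1-\theta}$, and the same $\pm\epsilon_f$ bookkeeping to pass between $f$ and $\phi$ in both the Armijo verification and the decrease bound \eqref{eq.decrease_phi}. The only cosmetic differences are that you convert the Armijo test to a deterministic inequality in $\phi$ before invoking smoothness (the paper does it afterward) and that you explicitly dispatch the $g_k=0$ case, which the paper leaves implicit.
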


\begin{proof}{ By Assumption \ref{assum:lip_cont}, we have
\begin{align*}
	\phi(x_k - \alpha_k \ab{g_k}) \leq \phi(x_k) - \alpha_k \ab{g_k}^T\nabla \phi(x_k) +\frac{\alpha_k^2L}{2}\|\ab{g_k}\|^2,
\end{align*} 
Applying the Cauchy-Schwarz inequality, \eqref{eq:theta_cond} and \eqref{eq:boundgk}, for every \emph{true} iteration 
\begin{align*}
	\phi(x_k - \alpha_k \ab{g_k}) & \leq \phi(x_k) - \alpha_k \ab{g_k}^T \nabla \phi (x_k) + \frac{\alpha_k^2 L}{2} \| \ab{g_k}\|^2\\
			& = \phi(x_k) - \alpha_k \ab{g_k}^T (\nabla \phi (x_k) - \change{g_k}) - \alpha_k \left[1 -\frac{\alpha_k L}{2}\right] \| \ab{g_k}\|^2\\
			& \leq \phi(x_k) + \alpha_k \| \ab{g_k}\| \|\nabla \phi (x_k) - \ab{g_k}\| - \alpha_k \left[1 -\frac{\alpha_k L}{2}\right] \| \ab{g_k}\|^2\\
			& \leq \phi(x_k) + \frac{\alpha_k \theta}{1 - \theta }\| \ab{g_k} \|^2 - \alpha_k \left[1 -\frac{\alpha_k L}{2}\right] \| \ab{g_k} \|^2\\
			& = \phi(x_k) - \alpha_k \left[\frac{1-2\theta}{1-\theta} -\frac{\alpha_k L}{2}\right] \| \ab{g_k}\|^2.
\end{align*}
By Assumption \ref{assum:func_bnd}, we have
\begin{align*}
	f(x_k - \alpha_k \change{g_k},\ab{\xi}) \leq f(x_k,\ab{\xi}) - \alpha_k \left[\frac{1-2\theta}{1-\theta} -\frac{\alpha_k L}{2}\right] \| \ab{g_k}\|^2 + 2 \epsilon_f.
\end{align*}
From this we conclude that \eqref{eq:Armijo} holds whenever 
\begin{align*}
	f(x_k,\ab{\xi}) - \alpha_k\left[\frac{1-2\theta}{1-\theta} -\frac{\alpha_k L}{2}\right] \| \ab{g_k} \|^2 + 2 \epsilon_f \leq f(x_k,\ab{\xi}) - c_1 \alpha_k \| \ab{g_k} \|^2 + 2\epsilon_f,
\end{align*}
which is equivalent to \eqref{alpha_bd1}. Therefore, using Assumption \ref{assum:func_bnd} and \eqref{eq:boundgk}, for every \emph{true} and \emph{successful} iteration we have
\begin{align*}
	\phi(x_{k+1}) &\leq \phi(x_k) - c_1 \alpha_k (1-\theta)^2\| \nabla \phi(x_k) \|^2 + 4\epsilon_f, 
\end{align*}
which completes the proof.
}
\end{proof}

We should mention that when the error in the gradient approximation is zero, i.e., $\theta = 0$, we recover the step size parameter condition from the deterministic setting. Moreover, when there is no noise in the function, i.e., $\epsilon_f = 0$, we recover the sufficient decrease condition of the deterministic gradient descent algorithm with an Armijo backtracking line search.

Next, we state and prove a result for the case of \emph{false} and \emph{successful} iterations.

\begin{lemma}\label{stepsize_threshold_lemma1_succ}
Let Assumption \ref{assum:func_bnd}  hold. For every \textbf{false} and \textbf{successful} iteration of Algorithm \ref{alg:grad_approx_ls},  we have
\begin{align*}	
	\phi(x_{k+1}) &\leq \phi(x_k) - c_1 \alpha_k \| \ab{g_k} \|^2 + 4\epsilon_f.
\end{align*}
\end{lemma}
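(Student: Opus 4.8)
The plan is to observe that this bound depends only on the modified Armijo condition \eqref{eq:Armijo} together with the noise bound of Assumption \ref{assum:func_bnd}, and \emph{not} on any gradient-accuracy property. This is precisely why it holds for a \emph{false} successful iteration: the previous lemma used the norm condition \eqref{eq:theta_cond} to obtain the sharper factor $(1-\theta)^2$ on a \emph{true} successful step, but here, having no control on $g_k$, I would simply keep the decrease expressed in terms of $\|g_k\|^2$ as delivered directly by the line search.

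First I would use that on any \emph{successful} iteration the step is accepted, so $x_{k+1} = x_k + \alpha_k d_k$ with $d_k = -g_k$, giving $d_k^T g_k = -\|g_k\|^2$. Substituting this into \eqref{eq:Armijo} turns the acceptance test into a statement about the noisy values,
\[
	f(x_{k+1},\xi) \leq f(x_k,\xi) - c_1 \alpha_k \|g_k\|^2 + 2\epsilon_f .
\]
Next I would convert this inequality on $f$ into one on $\phi$ by invoking Assumption \ref{assum:func_bnd} twice, once at each endpoint: on the left I would bound $\phi(x_{k+1}) \leq f(x_{k+1},\xi) + \epsilon_f$, and on the right I would bound $f(x_k,\xi) \leq \phi(x_k) + \epsilon_f$. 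Chaining these two estimates through the displayed Armijo inequality accounts for an additional $\epsilon_f$ at each end, which combine with the $2\epsilon_f$ already present to yield exactly
\[
	\phi(x_{k+1}) \leq \phi(x_k) - c_1 \alpha_k \|g_k\|^2 + 4\epsilon_f ,
\]
as claimed.

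There is no genuine obstacle here; the content of the argument is bookkeeping of the noise terms, and the only point requiring care is identifying where each of the four copies of $\epsilon_f$ originates (two from the two applications of Assumption \ref{assum:func_bnd} and two from the slack built into the modified Armijo rule). The conceptual role of this lemma is to supply, in the later verification of Assumption \ref{ass:alg_behave}(iii), the worst-case per-step decrease guarantee on \emph{successful} iterations for which the gradient estimate may be inaccurate, complementing the \emph{true}-step bound \eqref{eq.decrease_phi} of Lemma \ref{stepsize_threshold_lemma1}.
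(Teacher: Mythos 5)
Your proof is correct and is essentially identical to the paper's own argument: the paper likewise notes that on a successful iteration the Armijo condition \eqref{eq:Armijo} with $d_k = -g_k$ gives $f(x_{k+1},\xi) \leq f(x_k,\xi) - c_1 \alpha_k \|g_k\|^2 + 2\epsilon_f$, and then applies Assumption \ref{assum:func_bnd} at both endpoints to convert to $\phi$, picking up the additional $2\epsilon_f$. Your accounting of where each copy of $\epsilon_f$ comes from, and your remark on the lemma's role in verifying Assumption \ref{ass:alg_behave}(iii), are both accurate.
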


\begin{proof}{ The proof of this lemma is straightforward. For every \emph{successful} iteration we have
\begin{align*}
	f(x_{k+1},\ab{\xi}) &\leq f(x_k,\ab{\xi}) - c_1 \alpha_k \| \ab{g_k} \|^2 + 2\epsilon_f.
\end{align*}
Thus, by Assumption \ref{assum:func_bnd}, 
\begin{align*}	
	\phi(x_{k+1}) &\leq \phi(x_k) - c_1 \alpha_k \| \ab{g_k} \|^2 + 4\epsilon_f,
\end{align*}
which completes the proof.
}
\end{proof}

The result of Lemma \ref{stepsize_threshold_lemma1_succ} shows the amount of decrease on \emph{false} and \emph{successful} iterations. Note, the error term $4\epsilon_f$ illustrates that on \emph{false} iterations the function value may increase and that the increase is related to the noise in the function values.

\subsubsection{Convex Functions}

In this section, we analyze the expected complexity of Algorithm \ref{alg:grad_approx_ls} in the case when $\phi$ is a convex function.
\begin{assumption} \label{assum:conv} \textbf{(Convexity and boundedness of iterates)}  The function $\phi$ is convex and
  there exists a constant $D >0$ such that
  \begin{align}\label{eq:conv_assum}
 \norm{x-x^\star} \le D \quad \text{for all $x\in \mathcal{U}$,}
 \end{align}
where  $x^\star$ is some global minimizer of \ab{$\phi$} (and $\phi^\star = \phi(x^\star)$) and the set $\ \mathcal{U}$ contains all iteration realizations.
\end{assumption}

This assumption may seem strong since it requires all iterates of the algorithm to remain in a bounded region. When the objective function is not allowed to increase, this assumption is simply ensured by assuming bounded level sets of $\phi(x)$. In the case of noisy function values in principle, iterates can wander out of a bounded region with some small probability (as this will require a large sequence of \emph{false} \emph{successful} iterations). Thus, ideally, we need to modify the algorithm to prevent it from going outside of some predefined bounded region, that is known to contain $x^\star$. Such modification is simple and our analysis will still apply, but with some notational complications. Therefore, we choose not to impose this modification explicitly. Note, we only use this assumption in the convex case and drop it in the strongly convex and nonconvex cases, and, thus, the nonconvex case convergence rates apply to the convex case without 
\eqref{eq:conv_assum}. 

We bound the number of iterations taken by Algorithm \ref{alg:grad_approx_ls} until $\phi (X_k ) - \phi^\star \leq \epsilon$ occurs. Let
\begin{align}	\label{eq:Delta_k}
	\Delta_k^\phi = \phi(X_k) - \phi^\star, \quad \text{and} \quad Z_k = \frac{1}{\Delta_k^\phi}  - \frac{1}{\Delta_0^\phi} .
\end{align}
By this definition, $N_\epsilon$ is the number of iterations taken until $Z_k \geq \frac{1}{\epsilon}  - \frac{1}{\Delta_0^\phi}  = Z_\epsilon$. Note, that due to the noise in the function evaluations, $\epsilon$ cannot be chosen to be arbitrarily small. We make an assumption on $\epsilon$ that explicitly defines the neighborhood of convergence.
\begin{assumption} \label{assum:epsilon_conv} \textbf{(Neighborhood of convergence, convex case)}  
\begin{align*}
		\epsilon^2 > \max \left\{\frac{8 \epsilon_f LD^2}{\gamma c_1 (1-\theta)(1-2\theta - c_1(1-\theta))} , 16 \epsilon_f^2\right\},
\end{align*}
with the same  $\gamma\in(0,1)$ as used in Assumption  \ref{assum:accurate}. 
\end{assumption}

\begin{remark} 
We will show that the above assumption   implies Assumption \ref{ass:alg_behave}(iv) with the same 
constant $\gamma$. Hence here we see the direct connection between $\gamma$ and the lower bound on $\epsilon$. As discussed previously, $\gamma$ can be chosen to be $\frac{1}{2}$, for example. 
\end{remark}

By Lemma \ref{stepsize_threshold_lemma1}, whenever $\Alpha_k \leq \bar{\alpha}$, then every \emph{true} iteration is also \emph{successful}. We now show that on \emph{true} and \emph{successful} iterations, $Z_k$ increases by at least some function $h(\Alpha_k) - r(\epsilon_f)$, for all $k < N_\epsilon$.

\begin{lemma} \label{lemm:convex} Let Assumptions \ref{assum:func_bnd}, \ref{assum:conv} and \ref{assum:epsilon_conv} hold, and consider any realization of Algorithm \ref{alg:grad_approx_ls}. For every iteration that is \textbf{true} and \textbf{successful}, we have 
\begin{align*}	
	\ab{ z_{k+1}  \geq z_k}+ \frac{c_1 \alpha_k(1-\theta)^2 }{4D^2} - \frac{4 \epsilon_f}{\epsilon^2}.
\end{align*}
\end{lemma}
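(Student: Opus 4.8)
The plan is to convert the one-step function decrease from \lemref{stepsize_threshold_lemma1} into a one-step increase of the potential. Fixing a realization and writing $\Delta_k^\phi = \phi(x_k) - \phi^\star$ so that $z_k = 1/\Delta_k^\phi - 1/\Delta_0^\phi$, I would start from the guaranteed decrease on a \emph{true} and \emph{successful} iteration, $\phi(x_{k+1}) \le \phi(x_k) - c_1\alpha_k(1-\theta)^2\|\nabla\phi(x_k)\|^2 + 4\epsilon_f$, and subtract $\phi^\star$ to express it in $\Delta_k^\phi, \Delta_{k+1}^\phi$. The next step removes the gradient norm: by convexity together with the bounded-iterates part of \asref{conv}, $\Delta_k^\phi \le \nabla\phi(x_k)^T(x_k - x^\star) \le D\|\nabla\phi(x_k)\|$, hence $\|\nabla\phi(x_k)\|^2 \ge (\Delta_k^\phi)^2/D^2$. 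Substituting and abbreviating $\beta = c_1\alpha_k(1-\theta)^2/D^2$ yields the scalar recursion $\Delta_{k+1}^\phi \le \Delta_k^\phi - \beta(\Delta_k^\phi)^2 + 4\epsilon_f =: M$.

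From here I would pass to reciprocals, being careful about the direction: since $\Delta_{k+1}^\phi \le M$ and $\Delta_{k+1}^\phi \ge 0$ forces $M \ge 0$ (the degenerate $M=0$ case makes $z_{k+1}=+\infty$ and is trivial), a smaller $\Delta_{k+1}^\phi$ means a larger $1/\Delta_{k+1}^\phi$, so $z_{k+1} - z_k = 1/\Delta_{k+1}^\phi - 1/\Delta_k^\phi \ge 1/M - 1/\Delta_k^\phi = (\beta(\Delta_k^\phi)^2 - 4\epsilon_f)/(\Delta_k^\phi M)$. Two facts then drive everything: $k<N_\epsilon$ gives $\Delta_k^\phi > \epsilon$, and \asref{epsilon_conv} forces $\epsilon > 4\epsilon_f$ (from $\epsilon^2 > 16\epsilon_f^2$); together these give $M \le \Delta_k^\phi + 4\epsilon_f < 2\Delta_k^\phi$.

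The core is to lower bound $(\beta(\Delta_k^\phi)^2 - 4\epsilon_f)/(\Delta_k^\phi M)$ by $\beta/4 - 4\epsilon_f/\epsilon^2$, and here I expect the main obstacle: the numerator can be negative (when the per-step noise dominates the guaranteed descent) and the denominator can be small, so neither term can be controlled in isolation. I would split on the sign of the numerator. When $\beta(\Delta_k^\phi)^2 \ge 4\epsilon_f$ the numerator is nonnegative, and bounding the denominator above by $2(\Delta_k^\phi)^2$ (via $M<2\Delta_k^\phi$) gives $\beta/2 - 2\epsilon_f/(\Delta_k^\phi)^2 \ge \beta/4 - 4\epsilon_f/\epsilon^2$. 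When $\beta(\Delta_k^\phi)^2 < 4\epsilon_f$, the same recursion instead forces $M > \Delta_k^\phi > \epsilon$, so $\Delta_k^\phi M > \epsilon^2$; splitting the quotient as $\beta\Delta_k^\phi/M - 4\epsilon_f/(\Delta_k^\phi M)$ and applying $M<2\Delta_k^\phi$ to the first term and $\Delta_k^\phi M > \epsilon^2$ to the second yields $\beta/2 - 4\epsilon_f/\epsilon^2 \ge \beta/4 - 4\epsilon_f/\epsilon^2$. The delicate point throughout is to measure the noise contribution against $\epsilon^2$ rather than against the possibly small $\Delta_{k+1}^\phi$, which is exactly what the case split buys.

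Finally I would read off $h(\alpha_k) = c_1\alpha_k(1-\theta)^2/(4D^2) = \beta/4$ and $r(\epsilon_f) = 4\epsilon_f/\epsilon^2$, matching the claimed inequality; this identification of $h$ and $r$ is what later lets the condition $r(\epsilon_f)/h(\bar\alpha) \le \gamma$ of \asref{alg_behave}(iv) reduce precisely to \asref{epsilon_conv}.
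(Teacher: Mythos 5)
Your proof is correct, and its skeleton matches the paper's: Lemma~\ref{stepsize_threshold_lemma1} (the Armijo condition combined with Assumption~\ref{assum:func_bnd}) yields the recursion $\Delta_{k+1}^\phi \le \Delta_k^\phi - \beta(\Delta_k^\phi)^2 + 4\epsilon_f$ with $\beta = c_1\alpha_k(1-\theta)^2/D^2$, convexity plus boundedness of iterates gives $\|\nabla\phi(x_k)\| \ge \Delta_k^\phi/D$, and both arguments run on the same two facts $\Delta_k^\phi \ge \epsilon$ and $\epsilon > 4\epsilon_f$. Where you genuinely differ is in the passage to reciprocals. The paper divides the rearranged recursion $\Delta_k^\phi + 4\epsilon_f - \Delta_{k+1}^\phi \ge \beta(\Delta_k^\phi)^2$ by $\Delta_{k+1}^\phi\left(\Delta_k^\phi + 4\epsilon_f\right)$ and compares $1/\Delta_{k+1}^\phi$ against the \emph{shifted} reciprocal $1/(\Delta_k^\phi + 4\epsilon_f)$: the cost of replacing that shifted reciprocal by $1/\Delta_k^\phi$ is exactly $\frac{4\epsilon_f}{\Delta_k^\phi(\Delta_k^\phi+4\epsilon_f)} \le \frac{4\epsilon_f}{\epsilon^2}$, while the right-hand side is bounded below by $\beta/4$ via $\Delta_{k+1}^\phi \le \Delta_k^\phi + 4\epsilon_f$ and $\Delta_k^\phi/(\Delta_k^\phi+4\epsilon_f) \ge \tfrac12$; the descent term and the noise term are thus never combined into one sign-indefinite numerator, and no case analysis is needed. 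You instead compare $1/M$ with $1/\Delta_k^\phi$ directly, which produces the sign-indefinite numerator $\beta(\Delta_k^\phi)^2 - 4\epsilon_f$ and obliges the split on whether descent dominates noise; both of your cases close with the same two facts and recover the identical constants $h(\alpha_k) = \beta/4$ and $r(\epsilon_f) = 4\epsilon_f/\epsilon^2$. The paper's shifted-reciprocal decomposition buys a uniform, case-free computation; your version is a bit longer but makes explicit the two regimes (descent-dominated versus noise-dominated) that Assumption~\ref{assum:epsilon_conv} is implicitly arbitrating, and you are actually more careful than the paper about the degenerate case $\Delta_{k+1}^\phi = 0$, where the paper's division is formally undefined.
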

\begin{proof}{ By Assumption \ref{assum:conv}, for all $x,y \in \mathbb{R}^n$, we have
\begin{align*}
	\phi(x) - \phi(y) \geq \nabla \phi(y)^T (x-y).
\end{align*}
Thus, if $x = x^\star$ and $y = x_k$, we have
\begin{align*}
	- \Delta_k^\phi = \phi^\star - \phi(x_k) &\geq \nabla \phi(x_k)^T (x^\star-x_k) \geq -D \|\nabla \phi(x_k)\|, 
\end{align*}
where we used the Cauchy-Schwarz inequality and \eqref{eq:conv_assum}. Thus, when $k$ is a \emph{true} iteration, by \eqref{eq:boundgk} we have
\begin{align*}
	\| \ab{g_k}\|^2 \geq (1-\theta)^2 \| \nabla \phi(x_k)\|^2 \geq \frac{(1-\theta)^2 \left(\Delta_k^\phi \right)^2}{D^2}.
\end{align*}
If $k$ is also a \emph{successful} iteration, then
\begin{align*}
	\Delta_k^\phi - \Delta_{k+1}^\phi = \phi(x_k) - \phi(x_{k+1}) \geq c_1 \alpha_k \| \ab{g_k}\|^2 - 4 \epsilon_f\geq \frac{c_1 \alpha_k(1-\theta)^2 \left(\Delta_k^\phi \right)^2}{D^2} - 4 \epsilon_f,
\end{align*}
and thus, 
\begin{align*}
	\Delta_k^\phi + 4\epsilon_f - \Delta_{k+1}^\phi \geq \frac{c_1 \alpha_k(1-\theta)^2 \left(\Delta_k^\phi \right)^2}{D^2}.
\end{align*}
Dividing by $\left(\Delta_{k+1}^\phi \right)\left( \Delta_{k}^\phi + 4 \epsilon_f \right)$
\begin{align}	\label{eq:conv1}
	\frac{1}{\Delta_{k+1}^\phi} - \frac{1}{\Delta_k^\phi + 4\epsilon_f}  \geq \frac{c_1 \alpha_k(1-\theta)^2 \left(\Delta_k^\phi \right)^2}{D^2\left(\Delta_{k+1}^\phi \right)\left( \Delta_{k}^\phi + 4 \epsilon_f \right)}.
\end{align}
The left-hand-side of \eqref{eq:conv1} can be bounded by
\begin{align*}
	\frac{1}{\Delta_{k+1}^\phi} - \frac{1}{\Delta_k^\phi + 4\epsilon_f}  & = \frac{1}{\Delta_{k+1}^\phi} - \frac{1}{\Delta_{k}^\phi} + \frac{1}{\Delta_{k}^\phi} - \frac{1}{\Delta_k^\phi + 4\epsilon_f} \\
		& = \frac{1}{\Delta_{k+1}^\phi} - \frac{1}{\Delta_{k}^\phi} + \frac{4\epsilon_f}{\left(\Delta_k^\phi \right)\left( \Delta_{k}^\phi + 4 \epsilon_f \right)}\\
		& \leq \frac{1}{\Delta_{k+1}^\phi} - \frac{1}{\Delta_{k}^\phi} + \frac{4 \epsilon_f}{\epsilon^2},
\end{align*}
where the last inequality holds since $\Delta_k^\phi + 4\epsilon_f \geq \Delta_k^\phi \geq \epsilon$. The right-hand-side of \eqref{eq:conv1} can be bounded by
\begin{align*}
	\frac{c_1 \alpha_k(1-\theta)^2 \left(\Delta_k^\phi \right)^2}{D^2\left(\Delta_{k+1}^\phi \right)\left( \Delta_{k}^\phi + 4 \epsilon_f \right)}  & \geq \frac{c_1 \alpha_k(1-\theta)^2 \left(\Delta_k^\phi \right)^2}{D^2\left( \Delta_{k}^\phi + 4 \epsilon_f \right)^2} \\
				& \geq \frac{c_1 \alpha_k(1-\theta)^2 }{4D^2}
\end{align*}
where the first inequality holds since $\Delta_{k+1}^\phi  \leq  \Delta_{k}^\phi + 4 \epsilon_f $, and the second due to the fact that $\Delta_k^\phi \geq \epsilon > 4 \epsilon_f$ (due to Assumption \ref{assum:epsilon_conv}) and thus $\frac{\Delta_k^\phi}{\Delta_k^\phi+4\epsilon_f}\geq \frac{1}{2}$.

Therefore, we have,
\begin{align*}
	\left(\frac{1}{\Delta_{k+1}^\phi} - \frac{1}{\Delta_0^\phi} \right) - \left(\frac{1}{\Delta_k^\phi} - \frac{1}{\Delta_0^\phi} \right)  = 
	\frac{1}{\Delta_{k+1}^\phi} - \frac{1}{\Delta_{k}^\phi} \geq \frac{c_1 \alpha_k(1-\theta)^2 }{4D^2} - \frac{4 \epsilon_f}{\epsilon^2},
\end{align*}
which completes the proof.}
\end{proof}

We now bound the amount of increase on \emph{false} and \emph{successful} iterations.

\begin{lemma} \label{lemm:convex_2} Let Assumptions \ref{assum:func_bnd}, \ref{assum:conv} and \ref{assum:epsilon_conv} hold, and consider any realization of Algorithm \ref{alg:grad_approx_ls}. For every iteration that is \textbf{false} and \textbf{successful}, we have 
\begin{align*}
	\ab{ z_{k+1}  \geq z_k} - \frac{4 \epsilon_f}{\epsilon^2}.
\end{align*}
\end{lemma}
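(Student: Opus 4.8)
The plan is to track the change in the progress measure directly. By the definition \eqref{eq:Delta_k}, the constant term $1/\Delta_0^\phi$ cancels, so $z_{k+1} - z_k = \frac{1}{\Delta_{k+1}^\phi} - \frac{1}{\Delta_k^\phi}$, and the whole task reduces to bounding this reciprocal difference from below on a false and successful iteration. The natural starting point is Lemma \ref{stepsize_threshold_lemma1_succ}, which for such an iteration gives $\phi(x_{k+1}) \leq \phi(x_k) - c_1 \alpha_k \| g_k \|^2 + 4\epsilon_f$. Subtracting $\phi^\star$ from both sides and discarding the nonpositive term $-c_1\alpha_k\|g_k\|^2$ yields the crude but crucial bound $\Delta_{k+1}^\phi \leq \Delta_k^\phi + 4\epsilon_f$. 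Note that, unlike the true successful case of Lemma \ref{lemm:convex}, there is no guaranteed gain here: we simply throw away the decrease term and control only the possible loss.

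Next I would estimate how much the reciprocal can fall. The cleanest route is the splitting $\frac{1}{\Delta_{k+1}^\phi} - \frac{1}{\Delta_k^\phi} = \left(\frac{1}{\Delta_{k+1}^\phi} - \frac{1}{\Delta_k^\phi + 4\epsilon_f}\right) + \left(\frac{1}{\Delta_k^\phi + 4\epsilon_f} - \frac{1}{\Delta_k^\phi}\right)$. The first bracket is nonnegative precisely because $\Delta_{k+1}^\phi \leq \Delta_k^\phi + 4\epsilon_f$ makes $1/\Delta_{k+1}^\phi \geq 1/(\Delta_k^\phi + 4\epsilon_f)$. The second bracket equals $\frac{-4\epsilon_f}{\Delta_k^\phi(\Delta_k^\phi + 4\epsilon_f)}$, which is the only negative contribution.

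Finally I would invoke the stopping-time property: for $k < N_\epsilon$ we have $\Delta_k^\phi \geq \epsilon$ by Definition \ref{def.stop}. Since $\Delta_k^\phi + 4\epsilon_f \geq \Delta_k^\phi \geq \epsilon$, the denominator satisfies $\Delta_k^\phi(\Delta_k^\phi + 4\epsilon_f) \geq \epsilon^2$, so the second bracket is at least $-4\epsilon_f/\epsilon^2$. Combining the two brackets gives $z_{k+1} - z_k \geq -\frac{4\epsilon_f}{\epsilon^2}$, which is the claim.

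I expect no serious obstacle: this is the one-sided counterpart to Lemma \ref{lemm:convex}, and the argument is short. The only point requiring genuine care is the direction of the inequalities — on a false successful step the objective may actually increase, so one must bound how far $1/\Delta_k^\phi$ can drop rather than argue for progress, and the entire estimate hinges on keeping $\Delta_k^\phi$ bounded away from zero through $\Delta_k^\phi \geq \epsilon$, which is available exactly because $k < N_\epsilon$.
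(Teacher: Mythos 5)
Your proof is correct and follows essentially the same route as the paper: both invoke Lemma~\ref{stepsize_threshold_lemma1_succ}, discard the decrease term to get $\Delta_{k+1}^\phi \leq \Delta_k^\phi + 4\epsilon_f$, and then run the reciprocal-difference estimate of Lemma~\ref{lemm:convex} with the gain term set to zero, using $\Delta_k^\phi \geq \epsilon$ for $k < N_\epsilon$ to control the loss. Your explicit two-bracket splitting is exactly the simplification the paper describes in words.
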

\begin{proof}{  
For every \emph{false} and \emph{successful} iteration, by Lemma \ref{stepsize_threshold_lemma1_succ} we have
\begin{align*}
	\phi(x_{k+1}) &\leq \phi(x_k) - c_1 \alpha_k \| \ab{g_k} \|^2 + 4\epsilon_f\\
				& \leq \phi(x_k)  + 4\epsilon_f.
\end{align*}
The rest of the proof is essentially a simplified version of the proof of Lemma  \ref{lemm:convex}, where the right hand side 
in \eqref{eq:conv1}  is simply replaced with $0$. }
\end{proof}

Let,
\begin{align} \label{eq:h_r_convex}
	h(\alpha) = \frac{c_1 \alpha(1-\theta)^2 }{4D^2}, \qquad \text{and} \qquad r(\epsilon_f) = \frac{4 \epsilon_f}{\epsilon^2}.
\end{align}
By Lemmas \ref{stepsize_threshold_lemma1}, \ref{lemm:convex} and \ref{lemm:convex_2} and Assumption \ref{assum:epsilon_conv}, for any realization of Algorithm \ref{alg:grad_approx_ls} (which specifies the sequence $\{\alpha_k,z_k\}$) and $k < N_\epsilon$, we have:
\begin{enumerate}
	\item (Lemma \ref{lemm:convex}) If $k$ is a \emph{true} and \emph{successful} iteration, then 
	\begin{align*}
		z_{k+1} \geq z_k + h(\alpha_k) - r(\epsilon_f), \quad \text{and} \quad \alpha_{k+1} = \tau^{-1} \alpha_k.
	\end{align*}
	\item (Lemma \ref{stepsize_threshold_lemma1}) If $\alpha_k \leq \bar{\alpha}$ and iteration $k$ is \emph{true}, then it is also \emph{successful}.
	\item (Lemma \ref{lemm:convex_2}) If $k$ is a \emph{false} and \emph{successful} iteration, then
	\begin{align*}
		z_{k+1} \geq z_k  - r(\epsilon_f).
	\end{align*}
	\item (Assumption \ref{assum:epsilon_conv}) $ \frac{ r(\epsilon_f)}{ h(\bar{\alpha})}<\gamma $ for  $\gamma\in (0,1)$.
\end{enumerate}
Hence, Assumption \ref{ass:alg_behave} holds, with $\bar{\alpha}>0$ defined in \eqref{alpha_bd1}, and $h(\Alpha_k)$ and $r(\epsilon_f)$ defined in \eqref{eq:h_r_convex}.

We now use Theorem \ref{th:mainbound} and the definitions of $\bar{\alpha}$, $h(\bar{\alpha})$, $r(\epsilon_f)$ and $Z_\epsilon$ to bound $\mathbb{E}[N_\epsilon]$.
\begin{theorem} \label{th:convex}
Let Assumptions \ref{assum:lip_cont}, \ref{assum:func_bnd}, \ref{assum:accurate} and \ref{assum:conv} hold. Moreover, let Assumption \ref{assum:epsilon_conv} hold, i.e., 
\begin{align*}
	\epsilon^2 > \max \left\{\frac{8 \epsilon_f LD^2}{\gamma c_1 (1-\theta)(1-2\theta - c_1(1-\theta))} , 16 \epsilon_f^2\right\},
\end{align*}
with the same  $\gamma\in(0,1)$ as used in Assumption  \ref{assum:accurate}.
 Then, the expected number of iterations that Algorithm \ref{alg:grad_approx_ls} takes until $ \phi(X_k) - \phi^\star \leq \epsilon$ occurs is bounded as follows
\begin{align*}
	\mathbb{E}[N_\epsilon] \leq \frac{2(1-\delta)}{(1-2\delta)^2-\gamma} \left[ M\left( \frac{1}{\epsilon} - \frac{1}{\phi(x_0) - \phi^\star} \right) + (1-\gamma)\log_\tau \left( \frac{\bar{\alpha}}{\alpha_0} \right)\right],
\end{align*}
where $M = \frac{4LD^2}{ c_1 (1-\theta)\left(1-2\theta - c_1(1-\theta)\right) } $.
\end{theorem}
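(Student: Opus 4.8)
The plan is to obtain the statement as a direct specialization of the master supermartingale bound in Theorem~\ref{th:mainbound}, since essentially all of the analytic work has already been completed. First I would collect the ingredients assembled just before the theorem: the preceding discussion verifies that the process $\{\Alpha_k, Z_k\}$ arising in the convex case satisfies Assumption~\ref{ass:alg_behave} with the explicit choices
\begin{align*}
 h(\alpha) = \frac{c_1 \alpha (1-\theta)^2}{4D^2}, \qquad r(\epsilon_f) = \frac{4\epsilon_f}{\epsilon^2}, \qquad \bar{\alpha} = \frac{2(1-2\theta - c_1(1-\theta))}{L(1-\theta)},
\end{align*}
taken from \eqref{alpha_bd1} and \eqref{eq:h_r_convex}, and with the progress target $Z_\epsilon = \tfrac{1}{\epsilon} - \tfrac{1}{\phi(x_0) - \phi^\star}$ read off from \eqref{eq:Delta_k}. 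Parts (i) and (iii) of Assumption~\ref{ass:alg_behave} are exactly Lemmas~\ref{lemm:convex} and \ref{lemm:convex_2}, part (ii) is Lemma~\ref{stepsize_threshold_lemma1}, so the only piece I would actually need to check is part (iv).

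The crux of the argument, and really its only nontrivial step, is therefore verifying Assumption~\ref{ass:alg_behave}(iv), which is precisely where the neighborhood-of-convergence condition Assumption~\ref{assum:epsilon_conv} enters. Substituting $\bar{\alpha}$ into $h$ I would first simplify
\begin{align*}
 h(\bar{\alpha}) = \frac{c_1 (1-\theta)^2}{4D^2}\cdot \frac{2(1-2\theta - c_1(1-\theta))}{L(1-\theta)} = \frac{c_1 (1-\theta)(1-2\theta - c_1(1-\theta))}{2LD^2},
\end{align*}
and then compute the ratio
\begin{align*}
 \frac{r(\epsilon_f)}{h(\bar{\alpha})} = \frac{8 LD^2 \epsilon_f}{\epsilon^2\, c_1 (1-\theta)(1-2\theta - c_1(1-\theta))}.
\end{align*}
This is strictly below $\gamma$ exactly when $\epsilon^2 > \tfrac{8\epsilon_f LD^2}{\gamma c_1(1-\theta)(1-2\theta - c_1(1-\theta))}$, which is the first term in the maximum of Assumption~\ref{assum:epsilon_conv}; the second term $\epsilon^2 > 16\epsilon_f^2$ has already been consumed inside Lemma~\ref{lemm:convex} (to guarantee $\Delta_k^\phi \geq \epsilon > 4\epsilon_f$ and hence the factor $\tfrac12$ there). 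Thus Assumption~\ref{assum:epsilon_conv} delivers part (iv) with the same $\gamma$, and since Assumption~\ref{assum:accurate} supplies $\delta < \tfrac12 - \tfrac{\sqrt\gamma}{2}$, all hypotheses of Theorem~\ref{th:mainbound} are met.

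Finally I would invoke Theorem~\ref{th:mainbound} and unwind the constant. Using the value of $h(\bar{\alpha})$ above, the leading term becomes
\begin{align*}
 \frac{2Z_\epsilon}{h(\bar{\alpha})} = \frac{4LD^2}{c_1(1-\theta)(1-2\theta - c_1(1-\theta))}\,Z_\epsilon = M\left(\frac{1}{\epsilon} - \frac{1}{\phi(x_0) - \phi^\star}\right),
\end{align*}
with $M = \tfrac{4LD^2}{c_1(1-\theta)(1-2\theta - c_1(1-\theta))}$ exactly as claimed, while the logarithmic term $(1-\gamma)\log_\tau(\bar{\alpha}/\alpha_0)$ and the prefactor $\tfrac{2(1-\delta)}{(1-2\delta)^2 - \gamma}$ carry over verbatim from \eqref{eq:final_bound}. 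Assembling these yields the stated bound on $\mathbb{E}[N_\epsilon]$. The proof is thus almost entirely bookkeeping; the single substantive point to get right is the ratio computation establishing Assumption~\ref{ass:alg_behave}(iv), which is the mechanism that couples the admissible accuracy $\epsilon$ to the noise level $\epsilon_f$.
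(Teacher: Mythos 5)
Your proposal is correct and follows essentially the same route as the paper: the paper likewise proves Theorem~\ref{th:convex} by verifying Assumption~\ref{ass:alg_behave} through Lemmas~\ref{stepsize_threshold_lemma1}, \ref{lemm:convex} and \ref{lemm:convex_2} with $h$, $r$, $\bar{\alpha}$ as in \eqref{eq:h_r_convex} and \eqref{alpha_bd1}, identifying Assumption~\ref{assum:epsilon_conv} with part (iv), and then substituting into Theorem~\ref{th:mainbound}. Your explicit computation of $h(\bar{\alpha})$ and of the ratio $r(\epsilon_f)/h(\bar{\alpha})$, including the observation that the $16\epsilon_f^2$ branch of the maximum is what powers Lemma~\ref{lemm:convex}, is exactly the bookkeeping the paper leaves implicit.
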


\begin{remark}  If $\delta=\theta=\epsilon_f = 0$ our algorithm reduces to a deterministic line search algorithm with exact function evaluations and gradients. When $\epsilon_f=0$, $\gamma$ can be chosen arbitrarily small, and the lower bound on $\epsilon$ is $0$. Notice that the complexity bound has two components, the first component $\frac{8D^2L}{c_1 (1 - c_1)\epsilon} $ achieves its minimum value, $\frac{32D^2L}{\epsilon} $, for $c_1=1/2$ and is similar to the complexity bounds of the fixed step gradient descent method for convex functions, and the second term $ \log_\tau \left( \frac{2\left(1 - c_1\right)}{\alpha_0L}\right)$ bounds the total number of unsuccessful iterations, on which $\alpha_k$ is reduced.
\end{remark}

\subsubsection{Strongly Convex Functions}

In this section, we analyze the expected complexity of Algorithm \ref{alg:grad_approx_ls} in the case when $\phi$ is a strongly convex function. 
\begin{assumption}\label{assum:strongly} \textbf{(Strong convexity of $\pmb{\phi}$)} There exists a positive constant $\mu$ such that 
\begin{align*}
	\phi(x)\geq \phi(y)+\nabla \phi(y)^T(x-y)+\frac{\mu}{2}\|x-y\|^2, \quad \text{for all } x,y \in \mathbb{R}^n.
\end{align*}
\end{assumption}
Under Assumption \ref{assum:strongly}, let $\phi^\star = \phi(x^\star)$, where $x^\star$ is the minimizer of $\phi$. 

Recall the definition of $\Delta_k^\phi$ \eqref{eq:Delta_k}. In this setting, we bound the number of iterations taken by Algorithm \ref{alg:grad_approx_ls}  until $\Delta_k^\phi \leq \epsilon$ occurs. However, in this setting $Z_k$ is defined as $Z_k=\log\left(\frac{1}{\Delta_k^\phi}\right)$ and the resulting complexity  bound is logarithmic in $\frac{1}{\epsilon}$. Note, similar to the convex case, due to the noise in the function evaluations, $\epsilon$ cannot be chosen to be arbitrarily small. We give a precise lower bound on $\epsilon$, and thus explicitly derive a bound for the neighborhood of convergence.
\begin{assumption} \label{assum:epsilon_stronglyconv} 
\textbf{(Neighborhood of convergence, strongly convex case)} 
\begin{align*}
	\epsilon > \frac{4 \epsilon_f}{\left( 1 - \frac{2 \mu c_1 (1-\theta)(1-2\theta - c_1(1-\theta))}{L} \right)^{-\gamma} - 1},
\end{align*}
with the same  $\gamma\in(0,1)$ as used in Assumption  \ref{assum:accurate}. 
\end{assumption}

\begin{remark} 
The above assumption  again implies Assumption \ref{ass:alg_behave}(iv) with the same 
constant $\gamma$, which   connects  $\epsilon_f$ to the lower bound on $\epsilon$. Again, $\gamma$ can be chosen to be $\frac{1}{2}$, for simplicity. 
\end{remark}

By Lemma \ref{stepsize_threshold_lemma1}, whenever $\Alpha_k \leq \bar{\alpha}$, then every \emph{true} iteration is also \emph{successful}. We now show that on \emph{true} and \emph{successful} iterations, $Z_k$ increases by at least some function $h(\Alpha_k) - r(\epsilon_f)$, for all $k < N_\epsilon$.

\begin{lemma} \label{lemm:stronglyconvex} 
Let Assumptions \ref{assum:func_bnd}, \ref{assum:strongly} and \ref{assum:epsilon_stronglyconv} hold, and consider any realization of Algorithm \ref{alg:grad_approx_ls}. For every iteration that is \textbf{true} and \textbf{successful}, we have
\begin{align*} 
	\ab{ z_{k+1}  \geq z_k} - \log \left( 1 - \mu c_1 \alpha_k  (1-\theta)^2 \right) - \log \left( 1 + \frac{4\epsilon_f}{\epsilon} \right). 
\end{align*}
\end{lemma}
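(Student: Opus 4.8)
The quantity to control is the one-step change of $Z_k=\log(1/\Delta_k^\phi)$, namely $z_{k+1}-z_k=\log(\Delta_k^\phi/\Delta_{k+1}^\phi)=-\log(\Delta_{k+1}^\phi/\Delta_k^\phi)$; hence the claim is equivalent to the multiplicative bound $\Delta_{k+1}^\phi \le \left(1-\mu c_1\alpha_k(1-\theta)^2\right)\left(1+\tfrac{4\epsilon_f}{\epsilon}\right)\Delta_k^\phi$, after which one takes negative logarithms and splits the logarithm of the product into the two stated terms. So the plan is to turn the additive sufficient-decrease guarantee into a contraction on $\Delta_k^\phi$.

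First I would record the Polyak--{\L}ojasiewicz inequality implied by strong convexity: minimizing the lower bound in Assumption \ref{assum:strongly} over its first argument (the minimizer being $x_k-\tfrac{1}{\mu}\nabla\phi(x_k)$) gives $\phi^\star\ge \phi(x_k)-\tfrac{1}{2\mu}\|\nabla\phi(x_k)\|^2$, i.e. $\|\nabla\phi(x_k)\|^2\ge 2\mu\,\Delta_k^\phi$. Next, for a true and successful iteration I would invoke the decrease estimate \eqref{eq.decrease_phi} of Lemma \ref{stepsize_threshold_lemma1}, subtract $\phi^\star$, and substitute the PL bound to obtain the affine recursion $\Delta_{k+1}^\phi \le \left(1-2\mu c_1\alpha_k(1-\theta)^2\right)\Delta_k^\phi+4\epsilon_f$.

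The crux is converting the additive noise term $4\epsilon_f$ into the multiplicative factor $\left(1+\tfrac{4\epsilon_f}{\epsilon}\right)$. Here I would use that $k<N_\epsilon$ forces $\Delta_k^\phi\ge\epsilon$, so that $4\epsilon_f\le\tfrac{4\epsilon_f}{\epsilon}\Delta_k^\phi$, together with the fact that $\epsilon$ is large relative to $\epsilon_f$ (so that $\tfrac{4\epsilon_f}{\epsilon}\le 1$), as guaranteed by the neighborhood condition in Assumption \ref{assum:epsilon_stronglyconv}. Writing $a=\mu c_1\alpha_k(1-\theta)^2$ and $b=\tfrac{4\epsilon_f}{\epsilon}$, a direct expansion gives $\left(1-a\right)\left(1+b\right)\Delta_k^\phi-\left[\left(1-2a\right)\Delta_k^\phi+4\epsilon_f\right]=a(1-b)\Delta_k^\phi+\left(b\,\Delta_k^\phi-4\epsilon_f\right)\ge 0$, the two nonnegativities coming respectively from $b\le 1$ and from $\Delta_k^\phi\ge\epsilon$. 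This is precisely where the full factor $2\mu$ from the PL inequality is needed: the ``extra'' contraction $a\,\Delta_k^\phi$ (the gap between the factor-$2$ and factor-$1$ forms) pays for the noise-induced inflation $(1+b)$.

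The main obstacle is this additive-to-multiplicative step, and in particular keeping the logarithm's argument $1-a$ positive; I would note that the bound is meaningful on the range of $\alpha_k$ for which $\mu c_1\alpha_k(1-\theta)^2<1$, which is the regime $\alpha_k\le\bar\alpha$ relevant for the subsequent identification $h(\alpha)=-\log(1-\mu c_1\alpha(1-\theta)^2)$ and $r(\epsilon_f)=\log(1+4\epsilon_f/\epsilon)$ used to verify Assumption \ref{ass:alg_behave}. Finally, dividing the contraction by $\Delta_k^\phi$, taking $-\log$, and using $\log(uv)=\log u+\log v$ produces the two stated terms and completes the argument.
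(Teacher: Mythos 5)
Your proposal is correct and follows essentially the same route as the paper's proof: strong convexity gives the PL bound $\|\nabla\phi(x_k)\|^2 \ge 2\mu\,\Delta_k^\phi$, which combined with \eqref{eq.decrease_phi} yields the affine recursion $\Delta_{k+1}^\phi \le \left(1-2\mu c_1\alpha_k(1-\theta)^2\right)\Delta_k^\phi + 4\epsilon_f$, and the additive noise term is then absorbed into the factored form $\left(1-\mu c_1\alpha_k(1-\theta)^2\right)\left(1+4\epsilon_f/\epsilon\right)\Delta_k^\phi$ using exactly the two facts you cite, namely $\Delta_k^\phi\ge\epsilon$ and $4\epsilon_f/\epsilon\le 1$ from Assumption \ref{assum:epsilon_stronglyconv}. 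The only minor difference is how positivity of the logarithm's argument is secured: the paper deduces $1-\mu c_1\alpha_k(1-\theta)^2\ge 0$ for every true and successful iteration (not only those with $\alpha_k\le\bar\alpha$) directly from $\Delta_{k+1}^\phi\ge 0$, $\Delta_k^\phi>0$ and the contraction just derived, whereas you restrict attention to the small-step regime.
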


\begin{proof}{ Assumption \ref{assum:strongly} implies that ($x = x_k$ and $y = x^\star$)
\begin{align*}
	\phi(x_k) - \phi^\star \leq \frac{1}{2\mu} \| \nabla \phi(x_k)\|^2,
\end{align*}
see \cite[Theorem 2.1.10]{Nesterov}. Equivalently, using \eqref{eq:boundgk}
\begin{align*}
	\| \ab{g_k} \|^2 \geq (1-\theta)^2 \| \nabla \phi(x_k)\|^2 \geq 2\mu (1-\theta)^2 (\phi(x_k) - \phi^\star).
\end{align*}
By equation \eqref{eq.decrease_phi}, for every \emph{true} and \emph{successful} iteration we have
\begin{align}\label{eq.decrease_phi_conv}
	\phi(x_{k+1}) & \leq \phi(x_k) - c_1 \alpha_k (1-\theta)^2 \| \nabla \phi(x_k)\|^2 + 4\epsilon_f \nonumber\\
					& \leq \phi(x_k) - 2\mu c_1 \alpha_k  (1-\theta)^2 (\phi(x_k) - \phi^\star) + 4\epsilon_f,
\end{align}
and, thus,
\begin{align*}
	\phi(x_{k+1})  - \phi^\star \leq \left( 1- 2\mu c_1 \alpha_k  (1-\theta)^2 \right) (\phi(x_k) - \phi^\star) + 4\epsilon_f.
\end{align*}
Since we have that $\phi(x_k) - \phi^\star \geq \epsilon$,
\begin{align*}
	\phi(x_{k+1})  - \phi^\star &\leq \left( 1- 2\mu c_1 \alpha_k  (1-\theta)^2 \right) (\phi(x_k) - \phi^\star) + 4\epsilon_f\\
				& \leq \left( 1- 2\mu c_1 \alpha_k  (1-\theta)^2 \right) (\phi(x_k) - \phi^\star) + \frac{4\epsilon_f}{\epsilon}(\phi(x_k) - \phi^\star)\\
				& = \left( 1- 2\mu c_1 \alpha_k  (1-\theta)^2 + \frac{4\epsilon_f}{\epsilon}\right) (\phi(x_k) - \phi^\star).
\end{align*}
Thus, using the definition of $\Delta_k^\phi$, we have
\begin{align*}
	\Delta_{k+1}^\phi \leq  \left( 1- 2\mu c_1 \alpha_k  (1-\theta)^2 + \frac{4\epsilon_f}{\epsilon}\right) \Delta_k^\phi.
\end{align*}
Since $\epsilon > 4\epsilon_f$ (due to Assumption \ref{assum:epsilon_stronglyconv}), we have 
\begin{align*}
	\Delta_{k+1}^\phi &\leq  \left( 1- 2\mu c_1 \alpha_k  (1-\theta)^2 + \frac{4\epsilon_f}{\epsilon}\right) \Delta_k^\phi \nonumber\\
	&\le \left( 1- \mu c_1 \alpha_k  (1-\theta)^2  - \frac{4\epsilon_f}{\epsilon} \mu c_1 \alpha_k  (1-\theta)^2 + \frac{4\epsilon_f}{\epsilon}\right) \Delta_k^\phi \nonumber\\
	&= \left( 1- \mu c_1 \alpha_k  (1-\theta)^2 \right) \left( 1 + \frac{4\epsilon_f}{\epsilon} \right) \Delta_k^\phi. 
\end{align*}
Notice that since $\left( 1 + \frac{4\epsilon_f}{\epsilon} \right) > 0$, $\Delta_k^\phi > 0$ and $\Delta_{k+1}^\phi \geq 0$, this implies that $ 1- \mu c_1 \alpha_k  (1-\theta)^2 \geq 0$. Now taking the inverse and then the log of both sides and addinh $\log \Delta_0^\phi$, we have
\begin{align*}
	\log \left( \frac{\Delta_0^\phi}{\Delta_{k+1}^\phi} \right) \ge \log \left( \frac{\Delta_0^\phi}{\Delta_k^\phi} \right) - \log \left( 1 - \mu c_1 \alpha_k  (1-\theta)^2 \right) - \log \left( 1 + \frac{4\epsilon_f}{\epsilon} \right),
\end{align*}
which completes the proof.
}
\end{proof}

We note here that $ 1- \mu c_1 \alpha_k  (1-\theta)^2 \geq 0$ holds for all  $\alpha_k \leq \bar\alpha$ due to the constraint $\theta \in \left[0, \frac{1-c_1}{2-c_1}\right)$. 

We now bound the amount of increase on \emph{false} and \emph{successful} iterations.

\begin{lemma} \label{lemm:stronglyconvex_2} 
Let Assumptions \ref{assum:func_bnd}, \ref{assum:strongly} and \ref{assum:epsilon_stronglyconv} hold, and consider any realization of Algorithm \ref{alg:grad_approx_ls}. For every iteration that is \textbf{false} and \textbf{successful}, we have
\begin{align*}
	\ab{ z_{k+1}  \geq z_k} - \log \left( 1 + \frac{4 \epsilon_f}{\epsilon}\right).
\end{align*}
\end{lemma}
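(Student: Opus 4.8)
The plan is to mirror the argument in the proof of \lemref{stronglyconvex}, but with the cruder per-step decrease supplied by \lemref{stepsize_threshold_lemma1_succ} in place of the strong-convexity-driven contraction. Recall that in the strongly convex setting $z_k = \log\!\left(\Delta_0^\phi / \Delta_k^\phi\right)$, so bounding the change in $z_k$ amounts to bounding the ratio $\Delta_{k+1}^\phi / \Delta_k^\phi$ from above.

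First I would invoke \lemref{stepsize_threshold_lemma1_succ}, which applies to every successful iteration (in particular to the \emph{false successful} ones), to write
\begin{align*}
	\phi(x_{k+1}) \leq \phi(x_k) - c_1 \alpha_k \| g_k \|^2 + 4\epsilon_f \leq \phi(x_k) + 4\epsilon_f,
\end{align*}
the second inequality discarding the nonnegative term $c_1 \alpha_k \|g_k\|^2$. Subtracting $\phi^\star$ from both sides gives $\Delta_{k+1}^\phi \leq \Delta_k^\phi + 4\epsilon_f$. This is precisely the place where the argument simplifies relative to \lemref{stronglyconvex}: there is no guaranteed multiplicative decrease, only the additive noise slack $4\epsilon_f$.

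Next I would use the stopping-time condition $\Delta_k^\phi \geq \epsilon$ (valid since $k < N_\epsilon$) to convert the additive bound into a multiplicative one,
\begin{align*}
	\Delta_{k+1}^\phi \leq \Delta_k^\phi + 4\epsilon_f = \Delta_k^\phi\left(1 + \frac{4\epsilon_f}{\Delta_k^\phi}\right) \leq \Delta_k^\phi\left(1 + \frac{4\epsilon_f}{\epsilon}\right),
\end{align*}
exactly the factor $\left(1 + 4\epsilon_f/\epsilon\right)$ appearing in the claim. Taking reciprocals (which reverses the inequality, as all quantities are strictly positive), multiplying by $\Delta_0^\phi$, and applying the monotone $\log$ then yields
\begin{align*}
	\log\!\left(\frac{\Delta_0^\phi}{\Delta_{k+1}^\phi}\right) \geq \log\!\left(\frac{\Delta_0^\phi}{\Delta_k^\phi}\right) - \log\!\left(1 + \frac{4\epsilon_f}{\epsilon}\right),
\end{align*}
which is the asserted bound $z_{k+1} \geq z_k - \log\!\left(1 + 4\epsilon_f/\epsilon\right)$.

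There is no real obstacle here: the only points demanding care are the correct reversal of the inequality when taking reciprocals and the explicit use of the stopping-time bound $\Delta_k^\phi \geq \epsilon$ to pass from the additive to the multiplicative estimate. Together with \lemref{stronglyconvex}, this identifies the decrease function $r(\epsilon_f) = \log\!\left(1 + 4\epsilon_f/\epsilon\right)$ uniformly across true-successful and false-successful iterations, which is what \asref{alg_behave}(iii) requires for the strongly convex case.
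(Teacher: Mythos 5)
Your proof is correct and follows essentially the same route as the paper's: the paper likewise invokes Lemma~\ref{stepsize_threshold_lemma1_succ}, drops the $-c_1\alpha_k\|g_k\|^2$ term, and then runs the argument of Lemma~\ref{lemm:stronglyconvex} with the contraction term set to zero, which is exactly your additive-to-multiplicative step using $\Delta_k^\phi \geq \epsilon$ followed by taking reciprocals and logarithms. The only difference is that the paper leaves this simplification implicit while you have written it out in full.
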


\begin{proof}{ For every \emph{false} and \emph{successful} iteration, by Lemma \ref{stepsize_threshold_lemma1_succ} we have
\begin{align*}
	\phi(x_{k+1}) &\leq \phi(x_k) - c_1 \alpha_k \| \ab{g_k} \|^2 + 4\epsilon_f.
\end{align*}

The rest of the proof is essentially a simplification of the proof of Lemma \ref{lemm:stronglyconvex} with the middle term of the right hand side of \eqref{eq.decrease_phi_conv} replaced by $0$. 
}
\end{proof}
Let 
\begin{align}\label{eq:h_r_str_convex}
	h(\alpha) = - \log(1- \mu c_1   (1-\theta)^2 \alpha), \qquad \text{and } r(\epsilon_f) = \log \left( 1 + \frac{ 4\epsilon_f}{\epsilon}\right)
\end{align}

By Lemmas \ref{stepsize_threshold_lemma1}, \ref{lemm:stronglyconvex} and \ref{lemm:stronglyconvex_2} and Assumption \ref{assum:epsilon_stronglyconv}, for any realization of Algorithm \ref{alg:grad_approx_ls} (which specifies the sequence $\{\alpha_k,z_k\}$) and $k < N_\epsilon$, we have:
\begin{enumerate}
	\item (Lemma~\ref{lemm:stronglyconvex}) If $k$ is a \emph{true} and \emph{successful} iteration, then
	\begin{align*}
		z_{k+1} \geq z_k + h(\alpha_k) - r(\epsilon_f), \quad \text{and} \quad \alpha_{k+1} = \tau^{-1} \alpha_k.
	\end{align*}
	\item (Lemma~\ref{stepsize_threshold_lemma1}) If $\alpha_k \leq \bar{\alpha}$ and iteration $k$ is \emph{true}, then it is also \emph{successful}.
	\item (Lemma~\ref{lemm:stronglyconvex_2}) If $k$ is a \emph{false} and \emph{successful} iteration, then
	\begin{align*}
		z_{k+1} \geq z_k  - \log \left( 1 + \frac{ 4\epsilon_f}{\epsilon}\right).
	\end{align*}
	\item(Assumption~\ref{assum:epsilon_stronglyconv}) $\frac{ r(\epsilon_f)}{h(\bar{\alpha})}<\gamma$ for some $\gamma \in (0,1)$. 
\end{enumerate}
Hence, Assumption \ref{ass:alg_behave} holds, with $\bar{\alpha}>0$ defined in \eqref{alpha_bd1}, and $h(\Alpha_k)$ and $r(\epsilon_f)$ defined in \eqref{eq:h_r_str_convex}. 

We now use Theorem \ref{th:mainbound} and the definitions of $\bar{\alpha}$, $h(\bar{\alpha})$, $r(\epsilon_f)$ and $Z_\epsilon$ to bound $\mathbb{E}[N_\epsilon]$.
\begin{theorem} \label{th:str_convex}
Let Assumptions \ref{assum:lip_cont}, \ref{assum:func_bnd}, \ref{assum:accurate} and \ref{assum:strongly} hold. Moreover, let Assumption \ref{assum:epsilon_stronglyconv} hold, i.e., 
\begin{align*}
	\epsilon > \frac{4 \epsilon_f}{\left( 1 - \frac{2 \mu c_1 (1-\theta)(1-2\theta - c_1(1-\theta))}{L} \right)^{-\gamma} - 1},
\end{align*}
with the same  $\gamma\in(0,1)$ as used in Assumption  \ref{assum:accurate}.
Then the expected number of iterations that Algorithm \ref{alg:grad_approx_ls} takes until $ \phi(X_k) - \phi^\star \leq \epsilon$ occurs is bounded as follows
\begin{align*}
	\mathbb{E}[N_\epsilon] \leq  \frac{2(1-\delta)}{(1-2\delta)^2 - \gamma} \left[ 2 \log_{1/M}\left( \frac{\phi(x_0) - \phi^\star}{\epsilon}\right) + (1-\gamma)\log_\tau \left( \frac{\bar{\alpha}}{\alpha_0} \right)\right],
\end{align*}
where $M =  1 -   \frac{2\mu c_1(1-\theta) \left(1-2\theta - c_1(1-\theta)\right)}{L} $.
\end{theorem}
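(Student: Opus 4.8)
The plan is to treat Theorem \ref{th:str_convex} as a pure instantiation of the master bound in Theorem \ref{th:mainbound}: essentially all the analytic work has already been done, so the proof reduces to substituting the strongly-convex-case quantities and performing one change-of-base simplification. First I would observe that the enumerated list immediately preceding the theorem statement—assembled from Lemma \ref{stepsize_threshold_lemma1}, Lemma \ref{lemm:stronglyconvex}, Lemma \ref{lemm:stronglyconvex_2}, and Assumption \ref{assum:epsilon_stronglyconv}—verifies exactly the four parts of Assumption \ref{ass:alg_behave}, with $\bar\alpha$ given by \eqref{alpha_bd1} and with $h(\alpha)$ and $r(\epsilon_f)$ given by \eqref{eq:h_r_str_convex}. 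Since Assumption \ref{assum:accurate} provides $\delta < \tfrac12 - \tfrac{\sqrt{\gamma}}{2}$, the hypotheses of Theorem \ref{th:mainbound} are met and it delivers
\[
\mathbb{E}[N_\epsilon] \leq \frac{2(1-\delta)}{(1-2\delta)^2 - \gamma}\left[\frac{2 Z_\epsilon}{h(\bar\alpha)} + (1-\gamma)\log_\tau\left(\frac{\bar\alpha}{\alpha_0}\right)\right].
\]

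The remaining task is to evaluate the two problem-specific quantities $Z_\epsilon$ and $h(\bar\alpha)$. For the strongly convex case, Table \ref{tbl:prog_upper} gives $Z_\epsilon = \log\big((\phi(x_0)-\phi^\star)/\epsilon\big)$. To compute $h(\bar\alpha)$ I would substitute the explicit value of $\bar\alpha$ from \eqref{alpha_bd1} into $h(\alpha) = -\log\!\big(1 - \mu c_1 (1-\theta)^2 \alpha\big)$; the factor $(1-\theta)$ in the denominator of $\bar\alpha$ cancels one power of $(1-\theta)^2$, yielding
\[
h(\bar\alpha) = -\log\!\left(1 - \frac{2\mu c_1 (1-\theta)\big(1-2\theta - c_1(1-\theta)\big)}{L}\right) = -\log M = \log(1/M),
\]
with $M$ as defined in the statement. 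I would then combine the two evaluations through the elementary change-of-base identity
\[
\frac{2 Z_\epsilon}{h(\bar\alpha)} = 2\,\frac{\log\big((\phi(x_0)-\phi^\star)/\epsilon\big)}{\log(1/M)} = 2\log_{1/M}\!\left(\frac{\phi(x_0)-\phi^\star}{\epsilon}\right),
\]
and insert this into the master bound, which reproduces the claimed inequality verbatim.

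No step here is a genuine obstacle; the substantive content lives upstream, in the supermartingale argument of Section \ref{sec:akfkproc} that culminates in Theorem \ref{th:mainbound}, and in Lemma \ref{lemm:stronglyconvex}, where the strong convexity inequality $\phi(x_k)-\phi^\star \le \tfrac{1}{2\mu}\|\nabla\phi(x_k)\|^2$ is turned into a multiplicative contraction of $\Delta_k^\phi$ with the noise cross-term absorbed via $\epsilon > 4\epsilon_f$. The one point requiring care is the admissibility of the logarithm base, i.e.\ that $M \in (0,1)$ so that $\log(1/M)>0$. That $M<1$ is immediate because the subtracted term is strictly positive under the constraint $\theta \in [0,\tfrac{1-c_1}{2-c_1})$, which makes $1-2\theta - c_1(1-\theta) > 0$; that $M>0$ follows from $\mu \le L$ together with the remark after Lemma \ref{lemm:stronglyconvex} (equivalently, $\mu c_1 (1-\theta)^2 \bar\alpha \le 2c_1(1-c_1) \le \tfrac12$), so in fact $M \in [\tfrac12,1)$ and the base $1/M$ is well defined.
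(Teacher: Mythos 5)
Your proposal is correct and follows exactly the paper's route: the paper likewise verifies Assumption \ref{ass:alg_behave} via Lemmas \ref{stepsize_threshold_lemma1}, \ref{lemm:stronglyconvex}, \ref{lemm:stronglyconvex_2} and Assumption \ref{assum:epsilon_stronglyconv}, then obtains the bound by substituting $Z_\epsilon$ from Table \ref{tbl:prog_upper} and $h(\bar\alpha)=-\log M$ into Theorem \ref{th:mainbound}. Your added check that $M\in(0,1)$ (so the base $1/M$ is legitimate) is a point the paper only touches implicitly, via the remark after Lemma \ref{lemm:stronglyconvex}, and is a welcome bit of extra rigor.
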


\begin{remark} Again, if $\delta=\theta=\epsilon_f = 0$ our algorithm reduces to a deterministic line search algorithm with exact function evaluations and gradients. The complexity bound has two components, $4\log_{1/M} \left(\frac{1}{\epsilon}\right) $ where $M=1-  \frac{4\mu c_1\left(1- c_1\right)}{L}$  achieves its minimum value, $1-\frac{\mu}{L} $, for $c_1=1/2$ and  is similar to complexity bounds of the fixed step gradient descent method for strongly convex functions, and the second term again is the bound on the total number of unsuccessful iterations.
\end{remark}

\subsubsection{Nonconvex Functions}

In this section, we analyze the expected complexity of Algorithm \ref{alg:grad_approx_ls} in the case when $\phi$ is a nonconvex function. Again, we first specify the neighborhood of convergence. In this setting $Z_k = \phi(X_0) - \phi(X_k)$.
\begin{assumption} \label{assum:epsilon_nonconv} \textbf{(Neighborhood of convergence, nonconvex case)}  
\begin{align*}
		\epsilon^2 > \frac{2\epsilon_f L}{\gamma c_1 (1-\theta)(1-2\theta-c_1(1-\theta))},
\end{align*}
with the same  $\gamma\in(0,1)$ as used in Assumption  \ref{assum:accurate}.
\end{assumption}

\begin{remark}
The role of $\gamma$ is the same as in the convex and strongly convex cases.
\end{remark}

Let 
\begin{align} \label{eq:h_r_nonconvex}
	h(\alpha) = c_1 \alpha (1-\theta)^2 \| \nabla \phi (x_k)\|^2, \qquad \text{and} \qquad r(\epsilon_f) = 4\epsilon_f.
\end{align}

By Lemmas \ref{stepsize_threshold_lemma1} and \ref{stepsize_threshold_lemma1_succ} and Assumption \ref{assum:epsilon_nonconv}, for any realization of Algorithm \ref{alg:grad_approx_ls} (which specifies the sequence $\{ \alpha_k, z_k\}$) and $k < N_\epsilon$, we have:
\begin{enumerate}
	\item (Lemma~\ref{stepsize_threshold_lemma1}) If $k$ is a \emph{true} and \emph{successful} iteration, then
	\begin{align*}
		z_{k+1} \ge z_k + h(\alpha_k) - r(\epsilon_f) \quad \text{and} \quad \alpha_{k+1} = \tau^{-1} \alpha_k
	\end{align*}
	\item (Lemma~\ref{stepsize_threshold_lemma1}) If $\alpha_k \leq \bar{\alpha}$ and iteration $k$ is \emph{true}, then it is also \emph{successful}.
	\item (Lemma~\ref{stepsize_threshold_lemma1_succ}) If $k$ is a \emph{false} and \emph{successful} iteration, then
	\begin{align*}
		z_{k+1} \geq z_k  - 4 \epsilon_f.
	\end{align*}
	\item (Assumption~\ref{assum:epsilon_nonconv}) $\frac{ r(\epsilon_f)}{h(\bar{\alpha})}<\gamma$
	 for some $\gamma \in (0,1)$.
\end{enumerate}
Hence, Assumption \ref{ass:alg_behave} holds, with $\bar{\alpha}>0$ defined in \eqref{alpha_bd1}, and $h(\Alpha_k)$ and $r(\epsilon_f)$ defined in \eqref{eq:h_r_nonconvex}.

We now use Theorem \ref{th:mainbound} and the definitions of $\bar{\alpha}$, $h(\bar{\alpha})$, $r(\epsilon_f)$ and $Z_\epsilon$ to bound $\mathbb{E}[N_\epsilon]$.
\begin{theorem} \label{th:nonconvex}
Let Assumptions \ref{assum:lip_cont}, \ref{assum:low_bound}, \ref{assum:func_bnd} and \ref{assum:accurate}. Moreover, let Assumption \ref{assum:epsilon_nonconv} hold, i.e., 
\begin{align*}
	\epsilon^2 > \frac{ 2 \epsilon_f L}{\gamma c_1(1-\theta)(1-2\theta - c_1(1-\theta))},
\end{align*}
with the same  $\gamma\in(0,1)$ as used in Assumption  \ref{assum:accurate}.
Then, the expected number of iterations that Algorithm \ref{alg:grad_approx_ls} takes until $\| \nabla \phi(X_k) \| \leq \epsilon$ occurs is bounded as follows
\begin{align*}
	\mathbb{E}[N_\epsilon] \leq \frac{2(1-\delta)}{(1 - 2 \delta)^2-\gamma} \left[ \frac{M}{\epsilon^2} + (1-\gamma)\log_\tau \left( \frac{\bar{\alpha}}{\alpha_0}\right)\right],
\end{align*}
where $M = \frac{(\phi(x_0) - \hat{\phi})L}{c_1 (1-\theta) \left(1-2\theta - c_1(1-\theta)\right) } $.
\end{theorem}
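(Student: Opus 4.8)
The plan is to invoke the master bound of Theorem~\ref{th:mainbound} directly, since the list of four properties immediately preceding the statement already verifies that the stochastic process $\{\Alpha_k, Z_k\}$ with $Z_k = \phi(X_0)-\phi(X_k)$ satisfies Assumption~\ref{ass:alg_behave} under the choices of $h$ and $r$ in \eqref{eq:h_r_nonconvex}. All that remains is to read off $Z_\epsilon$ from Table~\ref{tbl:prog_upper}, produce a clean constant lower bound on $h(\bar\alpha)$, and substitute both into \eqref{eq:final_bound}.

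First I would record that in the nonconvex case the upper bound on the progress measure is $Z_\epsilon = \phi(X_0)-\hat\phi$, which is finite by Assumption~\ref{assum:low_bound}. The only genuine subtlety is that the function $h$ in \eqref{eq:h_r_nonconvex} carries the factor $\|\nabla\phi(x_k)\|^2$, so it is not literally a function of $\alpha$ alone as Assumption~\ref{ass:alg_behave} nominally requires; the fix is to exploit the stopping-time structure. For every $k<N_\epsilon$ the stopping criterion $\|\nabla\phi(X_k)\|\le\epsilon$ has not yet been met, hence $\|\nabla\phi(X_k)\|>\epsilon$, and therefore on every such iteration $h(\alpha_k)\ge c_1\alpha_k(1-\theta)^2\epsilon^2$. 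In particular, using the value of $\bar\alpha$ from \eqref{alpha_bd1},
\begin{align*}
 h(\bar\alpha)\ \ge\ c_1(1-\theta)^2\epsilon^2\,\frac{2(1-2\theta-c_1(1-\theta))}{L(1-\theta)}\ =\ \frac{2c_1(1-\theta)(1-2\theta-c_1(1-\theta))}{L}\,\epsilon^2,
\end{align*}
which is the constant lower bound I will feed into the master theorem.

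With this bound in hand the substitution is routine: $\tfrac{2Z_\epsilon}{h(\bar\alpha)}\le \tfrac{(\phi(X_0)-\hat\phi)L}{c_1(1-\theta)(1-2\theta-c_1(1-\theta))}\cdot\tfrac{1}{\epsilon^2}=\tfrac{M}{\epsilon^2}$ with $M$ as in the statement, and plugging into \eqref{eq:final_bound} yields exactly the claimed bound. I would also note, for completeness, that the admissibility condition Assumption~\ref{ass:alg_behave}(iv), namely $r(\epsilon_f)/h(\bar\alpha)<\gamma$, is precisely Assumption~\ref{assum:epsilon_nonconv}: substituting $r(\epsilon_f)=4\epsilon_f$ and the lower bound on $h(\bar\alpha)$ just derived, the inequality $4\epsilon_f<\gamma\cdot\frac{2c_1(1-\theta)(1-2\theta-c_1(1-\theta))}{L}\epsilon^2$ rearranges to the stated lower bound on $\epsilon^2$, so the neighborhood-of-convergence hypothesis is exactly what makes the master theorem applicable.

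The main obstacle is conceptual rather than computational: one must recognize that the iterate-dependent factor $\|\nabla\phi(x_k)\|^2$ in $h$ can be replaced by its uniform lower bound $\epsilon^2$ throughout the pre-stopping regime $k<N_\epsilon$, which is exactly where all the relevant quantities in Theorem~\ref{th:mainbound} live. Once that replacement is justified, the nonconvex case is the simplest of the three, since $Z_\epsilon$ is a fixed constant (with no $1/\epsilon$ or $\log(1/\epsilon)$ growth) and the $\epsilon$-dependence enters only through $h(\bar\alpha)$, producing the $1/\epsilon^2$ rate.
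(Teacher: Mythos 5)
Your proposal is correct and follows essentially the same route as the paper: verify Assumption~\ref{ass:alg_behave} via the four listed properties (with $h$, $r$ from \eqref{eq:h_r_nonconvex} and $\bar\alpha$ from \eqref{alpha_bd1}), read off $Z_\epsilon=\phi(X_0)-\hat\phi$, and substitute into Theorem~\ref{th:mainbound}. In fact you make explicit a step the paper leaves implicit --- replacing the iterate-dependent factor $\|\nabla\phi(x_k)\|^2$ in $h$ by its uniform lower bound $\epsilon^2$ valid for all $k<N_\epsilon$, which is also exactly what makes Assumption~\ref{assum:epsilon_nonconv} equivalent to the admissibility condition $r(\epsilon_f)/h(\bar\alpha)<\gamma$ --- so your write-up is, if anything, slightly more careful than the paper's.
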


\begin{remark} Again, if $\delta=\theta=\epsilon_f = 0$ our algorithm reduces to a deterministic line search with the exact gradients. The complexity bound has two components, $\frac{2M}{ \epsilon^2} $ where $M=\frac{(\phi(x_0) - \hat{\phi})L}{c_1 \left(1- c_1\right) } $  achieves its minimum value, $4(f(x_0) - \hat{f})L $, for $c_1=1/2$ and  is similar to complexity bounds of the fixed step gradient descent for nonconvex functions, and the second term, as before, is the bound on the total number of unsuccessful iterations.
\end{remark}

\subsection{General Descent}\label{sec.gen_descent}

For simplicity, in the analysis of the previous sections we assumed that the search direction at every iteration was defined as $d_k = -\ab{g_k}$. Here, we show how our analysis can be extended to account for more general search direction, e.g., quasi-Newton search direction where $d_k = -H_k\ab{g_k}$ \cite{NoceWrig06}, provided the search directions satisfy:
\begin{itemize}
	\item There exists a constant $\beta>0$, such that:
	\begin{align} \label{eq:beta}
		\frac{d_k^T \ab{g_k}}{\| d_k\| \| \ab{g_k}\|} \leq - \beta, \qquad \text{for all $k$,}
	\end{align}
	\item There exist constants $\kappa_1, \kappa_2 >0$, such that:
	\begin{align}	\label{eq:kappa}
		\kappa_1 \| \ab{g_k}\| \leq \| d_k \| \leq \kappa_2 \| \ab{g_k}\|, \qquad \text{for all $k$,}
	\end{align}
\end{itemize}
together with \eqref{eq:theta_cond}. Of course, in this setting, the modified line search would be given by \eqref{eq:Armijo}, and the convergence analysis would have dependence on $\beta$, $\kappa_1$ and $\kappa_2$. 

All we need to do is to derive an expression for $\bar \alpha$ for the general search direction case, and prove analogues of Lemmas \ref{stepsize_threshold_lemma1} and \ref{stepsize_threshold_lemma1_succ}. First, we change the bound on $\theta$ in the Definition \ref{def:suff_acc}. In particular we will require that $\theta\in\left [0, \frac{(1-c_1)\beta}{1+(1-c_1)\beta}\right )$. Now we can prove the following lemma. 

\begin{lemma}	\label{alphabar_d}
Let Assumption \ref{assum:lip_cont}  hold. For every realization  of Algorithm \ref{alg:grad_approx_ls},  if iteration $k$ is \textbf{true} (i.e., $I_k=1$), and if 
\begin{equation}\label{alpha_bd2}
	\alpha_k \leq \bar{\alpha} = \frac{2}{L \kappa_2} \left[ \frac{(1-c_1)(1-\theta)\beta - \theta}{1-\theta} \right],
\end{equation}
then \eqref{eq:Armijo} holds. In other words, when \eqref{alpha_bd2} holds, any \textbf{true} iteration is also a \textbf{successful} iteration. Moreover, for every \textbf{true} and \textbf{successful} iteration,
\begin{align*}		
	\phi(x_{k+1}) &\leq \phi(x_k) - c_1 \alpha_k \beta \kappa_1 (1-\theta)^2\| \nabla \phi(x_k) \|^2 + 4\epsilon_f.
\end{align*}
\end{lemma}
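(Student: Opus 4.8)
The plan is to mirror the proof of Lemma~\ref{stepsize_threshold_lemma1}, replacing the special structure $d_k=-g_k$ by the general direction $d_k$ and invoking the angle condition \eqref{eq:beta} and the norm equivalence \eqref{eq:kappa} at the appropriate places. First I would start from the Lipschitz descent inequality guaranteed by Assumption~\ref{assum:lip_cont},
\begin{align*}
\phi(x_k + \alpha_k d_k) \leq \phi(x_k) + \alpha_k d_k^T \nabla\phi(x_k) + \frac{\alpha_k^2 L}{2}\|d_k\|^2,
\end{align*}
and then split the inner product as $d_k^T\nabla\phi(x_k) = d_k^T g_k + d_k^T(\nabla\phi(x_k) - g_k)$. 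The crucial point is to keep $d_k^T g_k$ untouched (since it appears verbatim in the Armijo threshold \eqref{eq:Armijo}) and to control only the error term via Cauchy--Schwarz and the norm condition, using $\|\nabla\phi(x_k)-g_k\| \leq \frac{\theta}{1-\theta}\|g_k\|$, which follows from \eqref{eq:theta_cond} and \eqref{eq:boundgk}. This yields
\begin{align*}
\phi(x_k+\alpha_k d_k) \leq \phi(x_k) + \alpha_k d_k^T g_k + \frac{\alpha_k\theta}{1-\theta}\|d_k\|\|g_k\| + \frac{\alpha_k^2 L}{2}\|d_k\|^2.
\end{align*}

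Next I would translate this bound on $\phi$ into one on $f$ using Assumption~\ref{assum:func_bnd} (adding $2\epsilon_f$), and compare against the right-hand side of \eqref{eq:Armijo}. After cancellation it suffices to verify $(1-c_1)\alpha_k d_k^T g_k + \frac{\alpha_k\theta}{1-\theta}\|d_k\|\|g_k\| + \frac{\alpha_k^2 L}{2}\|d_k\|^2 \leq 0$. Here I would bound the first term from above using the angle condition \eqref{eq:beta}, namely $(1-c_1)\alpha_k d_k^T g_k \leq -(1-c_1)\alpha_k\beta\|d_k\|\|g_k\|$ (legitimate because $1-c_1>0$), and the quadratic term using the upper estimate $\|d_k\| \leq \kappa_2\|g_k\|$ from \eqref{eq:kappa}, so that $\|d_k\|^2 \leq \kappa_2\|d_k\|\|g_k\|$. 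Dividing the resulting inequality through by $\alpha_k\|d_k\|\|g_k\|>0$ collapses it to $\frac{\alpha_k L\kappa_2}{2} \leq (1-c_1)\beta - \frac{\theta}{1-\theta}$, which rearranges exactly to the threshold \eqref{alpha_bd2}. I would note in passing that the standing hypothesis $\theta \in \left[0, \frac{(1-c_1)\beta}{1+(1-c_1)\beta}\right)$ is precisely what makes the right-hand side of \eqref{alpha_bd2} positive, so $\bar\alpha>0$.

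For the sufficient-decrease statement, I would argue as in Lemma~\ref{stepsize_threshold_lemma1_succ}: on a successful iteration \eqref{eq:Armijo} holds, and two applications of Assumption~\ref{assum:func_bnd} give $\phi(x_{k+1}) \leq \phi(x_k) + c_1\alpha_k d_k^T g_k + 4\epsilon_f$. Then I would apply the angle condition \eqref{eq:beta} to get $d_k^T g_k \leq -\beta\|d_k\|\|g_k\|$, the lower norm bound $\|d_k\| \geq \kappa_1\|g_k\|$ from \eqref{eq:kappa} to obtain $\|d_k\|\|g_k\| \geq \kappa_1\|g_k\|^2$, and finally \eqref{eq:boundgk} to replace $\|g_k\|^2$ by $(1-\theta)^2\|\nabla\phi(x_k)\|^2$, arriving at the claimed inequality.

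The one place demanding care is the bookkeeping of which side of the two-sided bound \eqref{eq:kappa} to use and when: the upper bound $\|d_k\|\le\kappa_2\|g_k\|$ is needed to control the Lipschitz (quadratic) term in deriving $\bar\alpha$, whereas the lower bound $\|d_k\|\ge\kappa_1\|g_k\|$ is needed in the decrease estimate, and swapping them would break the argument. A related subtlety is resisting the temptation to bound $d_k^T g_k$ prematurely when establishing that the Armijo test is passed, since that term must survive intact to match the algorithm's acceptance criterion \eqref{eq:Armijo}; it is only in the final decrease estimate that \eqref{eq:beta} should be invoked to turn $d_k^T g_k$ into a strictly negative quantity.
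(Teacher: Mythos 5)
Your proposal is correct and follows essentially the same route as the paper's proof: the same Lipschitz-based expansion with the split $d_k^T\nabla\phi(x_k) = d_k^T g_k + d_k^T(\nabla\phi(x_k)-g_k)$, the same use of \eqref{eq:beta} with the $\kappa_2$ bound to derive the threshold \eqref{alpha_bd2}, and the same use of \eqref{eq:beta}, the $\kappa_1$ bound, and \eqref{eq:boundgk} in the decrease estimate. Your closing remarks about which side of \eqref{eq:kappa} to invoke where, and about keeping $d_k^T g_k$ intact until the Armijo comparison, accurately reflect the structure of the paper's argument.
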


\begin{proof}
The proof is very similar to that of Lemma \ref{stepsize_threshold_lemma1}. First from Assumption \ref{assum:lip_cont}, we have
\begin{align*}
\phi(x_{k+1}) \le \phi(x_k) + \alpha_k d_k^T \nabla \phi(x_k) + \frac{L}{2} \|\alpha_k d_k\|^2.
\end{align*}
Applying the Cauchy-Schwarz inequality, \eqref{eq:theta_cond} and \eqref{eq:boundgk}, for every \emph{true} iteration 
\begin{align*}
	\phi(x_k + \alpha_k d_k) & \leq \phi(x_k) + \alpha_k d_k^T \nabla \phi (x_k) + \frac{\alpha_k^2 L}{2} \| d_k \|^2\\
			& = \phi(x_k) + \alpha_k d_k^T (\nabla \phi (x_k) - \ab{g_k}) + \alpha_k d_k^T \ab{g_k} + \frac{\alpha_k^2 L}{2} \| d_k \|^2\\
			& \leq \phi(x_k) + \alpha_k \| d_k \| \|\nabla \phi (x_k) - \ab{g_k}\| + \alpha_k d_k^T \ab{g_k}) + \frac{\alpha_k^2 L}{2} \| d_k \|^2\\
			& \leq \phi(x_k) + \frac{\alpha_k \theta}{1 - \theta} \|d_k\| \|\ab{g_k}\| + \alpha_k d_k^T \ab{g_k} + \frac{\alpha_k^2 L \kappa_2}{2} \| d_k \| \| \ab{g_k} \| \\
			& \leq \phi(x_k) + \alpha_k d_k^T \ab{g_k} + \alpha_k \left[ \frac{\theta}{1 - \theta} + \frac{\alpha_k L \kappa_2}{2} \right] \| d_k \| \| \ab{g_k} \|.
\end{align*}
Now, using Assumption \ref{assum:func_bnd}, we have
\begin{align*}
	f(x_k + \alpha_k d_k,\ab{\xi}) \leq f(x_k,\ab{\xi}) + \alpha_k d_k^T \ab{g_k} + \alpha_k \left[ \frac{\theta}{1 - \theta} + \frac{\alpha_k L \kappa_2}{2} \right] \| d_k \| \| \ab{g_k} \| + 2 \epsilon_f.
\end{align*}
From this we conclude that \eqref{eq:Armijo} holds whenever 
\begin{align*}
	&f(x_k,\ab{\xi}) + \alpha_k d_k^T \ab{g_k} + \alpha_k \left[ \frac{\theta}{1 - \theta} + \frac{\alpha_k L \kappa_2}{2} \right] \| d_k \| \| \ab{g_k} \| + 2 \epsilon_f \\
	& \qquad \leq f(x_k,\ab{\xi}) + c_1 \alpha_k d_k^T \ab{g_k} + 2\epsilon_f,
\end{align*}
or equivalently, since $\alpha_k>0$,
\begin{align*}
	\left[ \frac{\theta}{1 - \theta} + \frac{\alpha_k L \kappa_2}{2} \right] \| d_k \| \| \ab{g_k} \| &\le - (1-c_1) d_k^T \ab{g_k}.
\end{align*}
Using \eqref{eq:beta}, the above expression holds whenever $\alpha_k$ satisfies \eqref{alpha_bd2}. Therefore, using Assumption \ref{assum:func_bnd}, \eqref{eq:kappa}, and \eqref{eq:boundgk}, for every \emph{true} and \emph{successful} iteration we have
\begin{align*}
	\phi(x_{k+1}) &\leq \phi(x_k) - c_1 \alpha_k \beta \kappa_1 (1-\theta)^2\| \nabla \phi(x_k) \|^2 + 4\epsilon_f, 
\end{align*}
which completes the proof.
\end{proof}

Next, we state and prove a result for the case of \emph{false} and \emph{successful} iterations.
\begin{lemma}
For every \textbf{false} and \textbf{successful} iteration of Algorithm \ref{alg:grad_approx_ls},  we have
\begin{align*}
	\phi(x_{k+1}) &\leq \phi(x_k) - c_1 \beta \alpha_k \kappa_1 \|\ab{g_k}\|^2 + 4\epsilon_f.
\end{align*}
\end{lemma}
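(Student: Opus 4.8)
The plan is to mirror the proof of Lemma~\ref{stepsize_threshold_lemma1_succ} (the $d_k=-\ab{g_k}$ case), since the only genuinely new ingredient for a general search direction is the pair of structural conditions \eqref{eq:beta} and \eqref{eq:kappa}. As in that lemma, I would not need to distinguish \textbf{true} from \textbf{false} iterations at all: the bound uses only the fact that the iteration is \emph{successful}, which by definition means the modified Armijo condition \eqref{eq:Armijo} is satisfied, together with the deterministic conditions on $d_k$ and Assumption~\ref{assum:func_bnd}. So the starting point is
\[
  f(x_k+\alpha_k d_k,\ab{\xi}) \le f(x_k,\ab{\xi}) + c_1\alpha_k d_k^T \ab{g_k} + 2\epsilon_f.
\]

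The key step is to lower-bound the negative inner product $d_k^T \ab{g_k}$ using the assumed geometry of the search direction. The angle condition \eqref{eq:beta} gives $d_k^T \ab{g_k}\le-\beta\|d_k\|\,\|\ab{g_k}\|$, and the left-hand inequality of the norm-equivalence condition \eqref{eq:kappa} gives $\|d_k\|\ge\kappa_1\|\ab{g_k}\|$; chaining these (and using $c_1,\alpha_k>0$) yields $c_1\alpha_k d_k^T \ab{g_k}\le-c_1\beta\kappa_1\alpha_k\|\ab{g_k}\|^2$. Substituting this into the Armijo inequality produces
\[
  f(x_{k+1},\ab{\xi}) \le f(x_k,\ab{\xi}) - c_1\beta\kappa_1\alpha_k\|\ab{g_k}\|^2 + 2\epsilon_f.
\]

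Finally I would convert from the noisy function $f$ to the true objective $\phi$ via Assumption~\ref{assum:func_bnd}, bounding $\phi(x_{k+1})\le f(x_{k+1},\ab{\xi})+\epsilon_f$ on the left and $f(x_k,\ab{\xi})\le\phi(x_k)+\epsilon_f$ on the right. These two conversions contribute two further copies of $\epsilon_f$, combining with the $2\epsilon_f$ already present to give the claimed total of $4\epsilon_f$, namely
\[
  \phi(x_{k+1}) \le \phi(x_k) - c_1\beta\kappa_1\alpha_k\|\ab{g_k}\|^2 + 4\epsilon_f.
\]

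There is no real obstacle here; the argument is a routine specialization of the $d_k=-\ab{g_k}$ proof, with the single substitution $d_k^T \ab{g_k}\le-\beta\kappa_1\|\ab{g_k}\|^2$ replacing the exact identity $(-\ab{g_k})^T \ab{g_k}=-\|\ab{g_k}\|^2$. The only points worth flagging explicitly are that the statement implicitly presumes Assumption~\ref{assum:func_bnd} and the search-direction conditions \eqref{eq:beta}--\eqref{eq:kappa}, and that, exactly as in Lemma~\ref{stepsize_threshold_lemma1_succ}, the $+4\epsilon_f$ slack reflects the possibility that $\phi$ increases on a \emph{false} iteration by an amount governed by the noise level.
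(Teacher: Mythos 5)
Your proof is correct and follows essentially the same route as the paper's: start from the Armijo condition satisfied on a successful iteration, use the angle condition \eqref{eq:beta} and the lower bound in \eqref{eq:kappa} to get $c_1\alpha_k d_k^T g_k \le -c_1\beta\kappa_1\alpha_k\|g_k\|^2$, and convert $f$ to $\phi$ via Assumption~\ref{assum:func_bnd} to accumulate the $4\epsilon_f$ (the paper just performs the $f\to\phi$ conversion before the geometric bounds, which is immaterial). Your observation that the argument never uses falseness, only successfulness, matches the paper's proof exactly.
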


\begin{proof}
For every \emph{successful} iteration we have
\begin{align*}
	f(x_{k+1},\ab{\xi}) &\leq f(x_k,\ab{\xi}) + c_1 \alpha_k d_k^T \ab{g_k} + 2\epsilon_f.
\end{align*}
Thus, by Assumption \ref{assum:func_bnd}, \eqref{eq:kappa}, and \eqref{eq:boundgk}
\begin{align*}	
	\phi(x_{k+1}) &\leq \phi(x_k) + c_1 \alpha_k d_k^T \ab{g_k} + 4\epsilon_f \\
	&\le \phi(x_k) - c_1 \alpha_k \beta \|d_k\| \|\ab{g_k}\| + 4\epsilon_f \\
	&\le \phi(x_k) - c_1 \alpha_k \beta \kappa_1 \|\ab{g_k}\|^2 + 4\epsilon_f, 
\end{align*}
which is a repetition of the last part of the proof of Lemma~\ref{alphabar_d}. 
\end{proof}

The rest of the analysis (deriving expected complexity bounds) applies almost without change, taking into account the influence of the constants $\beta$,  $\kappa_1$ and $\kappa_2$. 

\ab{
\subsection{Convergence under Condition \eqref{eq:alpha_cond_mod}}\label{sec:conv_alpha} 
{
In this section we demonstrate how our analysis can be extended to a different setting in terms of gradient estimate computations. To avoid introducing new notation, we will keep the discussion at a high level, which will hopefully be clear to the reader. The precise derivations in this sections are straightforward  extensions of derivations above. 

As we have pointed out before, the key condition \eqref{eq:theta_cond} can be satisfied by various gradient approximation schemes discussed in  \cite{berahas2019theoretical}. However, all these schemes require ${\cal O}(n)$ function evaluations to obtain $g_k$ that satisfies \eqref{eq:theta_cond}. This can be expensive in a high dimensional setting. On the other hand, in many applications a stochastic estimate of $\nabla \phi(x)$ may be directly available and thus $g_k$ can be computed by a sample averaging scheme. Since we assume that the function values are computed with noise, we cannot assume that these stochastic estimates are unbiased\change{. However,} as in the case of the function noise, we can assume that this bias is bounded.

\begin{assumption}	\label{assum:epsilon_g} 
\textbf{(Biased gradient estimates)} 
For each  $x$, we have an ability to compute a random vector $h(x,\xi)$, which is a (possibly) biased estimate of $\nabla \phi(x_k)$, and the bias is bounded by a known constant $\epsilon_g$, i.e., for all $x$
\begin{align*}
	\| \mathbb{E}[h(x,\xi)] - \nabla \phi(x) \| \leq \epsilon_g
\end{align*}
where the expectation is over random variable $\xi$. 
\end{assumption}

Thus, for any $\zeta>1$, by averaging a sufficiently large number of samples $h(x,\xi)$ we can compute a (random) $g$ such that $\| \ab{g} - \nabla \phi(x) \| \leq  {\zeta \epsilon_g}$,  with sufficiently high probability. 
On the other hand, without knowing  $\|\nabla \phi(x_k)\|$ we cannot ensure  \eqref{eq:theta_cond}. 
Here, we present the outline of the analysis of our modified line search method where \eqref{eq:theta_cond} is replaced with a condition 
\begin{align*}
	\| \ab{g_k} - \nabla \phi(x_k) \| \leq \max \{ {\zeta \epsilon_g},\kappa\alpha_k \|  \ab{g_k} \| \}.
\end{align*}
for some $\zeta > 1$,  and $\kappa \geq 0$.  Essentially, we want to relax   \eqref{eq:theta_cond} as long as $\kappa\alpha_k \|  \ab{g_k} \|$ is not so small that 
$\| \ab{g_k} - \nabla \phi(x_k) \| \leq \kappa\alpha_k \|  \ab{g_k} \|$ cannot be enforced with sufficiently high probability. When this happens, we want  \eqref{eq:theta_cond} to hold, 
which we can ensure by 
$\| \ab{g_k} - \nabla \phi(x_k) \| \leq  {\zeta \epsilon_g}$, as long as {$\|\nabla \phi(x_k) \| > \frac{\zeta\epsilon_g}{\theta}$}. Thus we need to add this lower bound on the gradient to our definition of the stopping time: 

\begin{definition}
\
\begin{itemize}\label{def:n_epsilon}
	\item If \ab{$\phi$} is convex or strongly convex: $N_\epsilon$ is the number of iterations required until either $\phi(X_k) - \phi^\star\ \leq \epsilon$ or {$\|\nabla \phi(x_k) \| \leq  \frac{\zeta\epsilon_g}{\theta}$}
	occurs for the first time. Note, $\phi^\star = \phi(x^\star)$, where $x^\star$ is a global minimizer of \ab{$\phi$}.
	\item If \ab{$\phi$} is nonconvex: $N_\epsilon$ is  the number of iterations required until $\| \nabla \phi(X_k)\| \leq \max\{\epsilon, \frac{\zeta\epsilon_g}{\theta}\}$ occurs for the first time. 
\end{itemize}
\end{definition}

}

For brevity, in this section we do not derive all the results, or state all the intermediate lemmas, rather we state the key results, without proof. We first present the analogue of Definition \ref{def:suff_acc} where \eqref{eq:theta_cond} is replaced with \eqref{eq:alpha_cond_mod}.
\begin{definition} \label{def:suff_acc_2}
A sequence of random gradients $\{G_k\}$ is $(1-\delta)$-probabilistically ``sufficiently accurate'' for Algorithm \ref{alg:grad_approx_ls} 
 if there exists constants {$\zeta > 1$} and $\kappa \geq 0$, such that the indicator variables
\begin{align*}
	I_k = \mathbbm{1}\{ \| G_k - \nabla \phi(X_k) \| \leq \max \{ {\zeta \epsilon_g},\kappa\Alpha_k \|G_k\|\}\}
\end{align*}
satisfy the following submartingale condition
\begin{align*}
	\mathbb{P}(I_k = 1 | \mathcal{F}_{k-1}^{G, \mathcal{E}} ) \geq 1 - \delta,
\end{align*}
\ab{for all realizations of $\mathcal{F}_{k-1}^{G, \mathcal{E}}$, }where $\mathcal{F}_{k-1}^{G, \mathcal{E}}  = \sigma(G_0,\ldots,G_{k-1},\mathcal{E}_{k-1})$ is the $\sigma$-algebra generated by $G_0,\ldots,G_{k-1}$ and  
$\mathcal{E}_{k-1}$, \ab{for all realizations}. Moreover, we say that iteration $k$ is a \textbf{true} iteration if the event $I_k = 1$ occurs, otherwise the iteration is called \textbf{false}.
\end{definition}

We assume (as was done in Section \ref{sec:conv_norm}) that Assumption \ref{assum:accurate} holds for Definition \ref{def:suff_acc_2}. In order to prove expected complexity bounds under  \eqref{eq:alpha_cond}, we make the following minor modification to Algorithm \ref{alg:ls_sub}. When the step is successful, $\alpha_{k+1} = \min \{ \tau^{-1}\alpha_k,\alpha_{\max}\}$, where $\alpha_{\max}>0$. 
\begin{lemma}\label{stepsize_threshold_lemma1_cond2}
Let Assumptions \ref{assum:lip_cont} and \ref{assum:func_bnd}  hold. For every realization  of Algorithm \ref{alg:grad_approx_ls},  if iteration $k$ is \textbf{true} (i.e., $I_k=1$), and if 
\begin{equation}\label{alpha_bd1_cond2}
	\alpha_k \leq \bar{\alpha} = \min \left\{\frac{2(1-2\theta - c_1(1-\theta))}{L(1-\theta)},\frac{2(1-c_1)}{L + 2\kappa} \right\},
\end{equation}
then \eqref{eq:Armijo} holds. In other words, when \eqref{alpha_bd1_cond2} holds, any \textbf{true} iteration is also a \textbf{successful} iteration. Moreover, for every \textbf{true} and \textbf{successful} iteration,
\begin{align}		\label{eq.decrease_phi_cond2}
	\phi(x_{k+1}) &\leq \phi(x_k) - c_1 \alpha_k \min \left\{ (1 - \theta)^2, \frac{1}{(1+ \kappa \alpha_{\max})^2}\right\} \| \nabla \phi(x_k) \|^2 + 4\epsilon_f.
\end{align}
Furthermore, for every \textbf{false} and \textbf{successful} iteration of Algorithm \ref{alg:grad_approx_ls},  we have
\begin{align*}	
	\phi(x_{k+1}) &\leq \phi(x_k) - c_1 \alpha_k \| \ab{g_k} \|^2 + 4\epsilon_f.
\end{align*}
\end{lemma}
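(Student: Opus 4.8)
The plan is to mimic the proof of Lemma~\ref{stepsize_threshold_lemma1}, but to branch on which of the two terms attains the maximum in the modified accuracy condition \eqref{eq:alpha_cond_mod}. As before, I would start from Assumption~\ref{assum:lip_cont} applied with $d_k = -g_k$, namely $\phi(x_k - \alpha_k g_k) \leq \phi(x_k) - \alpha_k g_k^T \nabla \phi(x_k) + \tfrac{\alpha_k^2 L}{2}\|g_k\|^2$, rewrite $-\alpha_k g_k^T \nabla\phi(x_k) = -\alpha_k g_k^T(\nabla\phi(x_k)-g_k) - \alpha_k\|g_k\|^2$, and bound the cross term by Cauchy--Schwarz to obtain $\phi(x_k-\alpha_k g_k) \leq \phi(x_k) + \alpha_k\|g_k\|\,\|\nabla\phi(x_k)-g_k\| - \alpha_k\big[1-\tfrac{\alpha_k L}{2}\big]\|g_k\|^2$. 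At this point the single quantity $\|\nabla\phi(x_k)-g_k\|$ is controlled by the maximum in \eqref{eq:alpha_cond_mod}, and I would split into two cases according to which term attains that maximum.

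In the first case, where the maximum is attained by $\kappa\alpha_k\|g_k\|$, I substitute $\|\nabla\phi(x_k)-g_k\| \leq \kappa\alpha_k\|g_k\|$ to get $\phi(x_k-\alpha_k g_k) \leq \phi(x_k) - \alpha_k\big[1 - \tfrac{\alpha_k L}{2} - \kappa\alpha_k\big]\|g_k\|^2$. Applying Assumption~\ref{assum:func_bnd} and comparing with the right-hand side of \eqref{eq:Armijo} (with $d_k^T g_k = -\|g_k\|^2$) shows that \eqref{eq:Armijo} holds whenever $1-\tfrac{\alpha_k L}{2}-\kappa\alpha_k \geq c_1$, i.e.\ whenever $\alpha_k \leq \tfrac{2(1-c_1)}{L+2\kappa}$, which is the second term in the $\min$ defining $\bar\alpha$ in \eqref{alpha_bd1_cond2}. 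For the per-step decrease in this case, the triangle inequality gives $\|\nabla\phi(x_k)\| \leq (1+\kappa\alpha_k)\|g_k\| \leq (1+\kappa\alpha_{\max})\|g_k\|$ (this is precisely where the newly imposed cap $\alpha_k\leq\alpha_{\max}$ is used), hence $\|g_k\|^2 \geq \|\nabla\phi(x_k)\|^2/(1+\kappa\alpha_{\max})^2$.

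In the second case, where the maximum is attained by $\zeta\epsilon_g$, I would invoke the new stopping-time Definition~\ref{def:n_epsilon}: since $k<N_\epsilon$ we have $\|\nabla\phi(x_k)\| > \zeta\epsilon_g/\theta$, so $\|\nabla\phi(x_k)-g_k\| \leq \zeta\epsilon_g < \theta\|\nabla\phi(x_k)\|$. This is exactly condition \eqref{eq:theta_cond}, so the argument of Lemma~\ref{stepsize_threshold_lemma1} applies verbatim, yielding the threshold $\alpha_k \leq \tfrac{2(1-2\theta-c_1(1-\theta))}{L(1-\theta)}$ (the first term in the $\min$) together with the bound $\|g_k\|^2 \geq (1-\theta)^2\|\nabla\phi(x_k)\|^2$ from \eqref{eq:boundgk}. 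Taking $\bar\alpha$ to be the minimum of the two thresholds guarantees \eqref{eq:Armijo} in both cases; and since on any true and successful iteration \eqref{eq:Armijo} together with Assumption~\ref{assum:func_bnd} gives $\phi(x_{k+1}) \leq \phi(x_k) - c_1\alpha_k\|g_k\|^2 + 4\epsilon_f$, substituting the smaller of the two lower bounds on $\|g_k\|^2$ produces the stated $\min\{(1-\theta)^2, (1+\kappa\alpha_{\max})^{-2}\}$ decrease constant (uniform because on a given iteration we do not know a priori which term is the maximizer).

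Finally, the false-and-successful bound is immediate and requires no accuracy condition at all: on any successful iteration \eqref{eq:Armijo} holds with $d_k^T g_k = -\|g_k\|^2$, so $f(x_{k+1},\xi) \leq f(x_k,\xi) - c_1\alpha_k\|g_k\|^2 + 2\epsilon_f$, and Assumption~\ref{assum:func_bnd} converts this to $\phi(x_{k+1}) \leq \phi(x_k) - c_1\alpha_k\|g_k\|^2 + 4\epsilon_f$, exactly as in Lemma~\ref{stepsize_threshold_lemma1_succ}. The main subtlety I anticipate is the handling of the second case: the bound $\zeta\epsilon_g$ is useless on its own, and it is only the redefinition of the stopping time---so that $\|\nabla\phi(x_k)\|$ remains above $\zeta\epsilon_g/\theta$ for every $k<N_\epsilon$---that converts it into the usable relative bound \eqref{eq:theta_cond}. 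Keeping track of the two thresholds and the two decrease constants so that each collapses cleanly to a single $\min$ is where the bookkeeping care is needed.
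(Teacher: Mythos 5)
Your proof is correct and follows exactly the route the paper intends: the paper states this lemma without a written proof, but its surrounding discussion and Definition~\ref{def:n_epsilon} prescribe precisely your two-case split, with the $\kappa\alpha_k\|g_k\|$ branch handled by adapting the argument of Lemma~\ref{stepsize_threshold_lemma1} (yielding the threshold $2(1-c_1)/(L+2\kappa)$ and the $(1+\kappa\alpha_{\max})^{-2}$ constant via the cap imposed by the modified Algorithm~\ref{alg:ls_sub}), and the $\zeta\epsilon_g$ branch reduced to the norm condition \eqref{eq:theta_cond} through the stopping-time bound $\|\nabla\phi(x_k)\|>\zeta\epsilon_g/\theta$. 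You also correctly identify the key subtlety that the $\zeta\epsilon_g$ case is only salvageable for $k<N_\epsilon$, which is exactly the reason the paper redefines the stopping time in this section.
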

\change{We should note that, if $g_k$ is the true gradient we recover the step size parameter condition from the deterministic setting.}

We now present the complexity bounds for condition \ref{eq:alpha_cond_mod} for convex (Theorem \ref{th:convex_cond2}), strongly convex (Theorem \ref{th:str_convex_cond2}) and nonconvex functions (Theorem \ref{th:nonconvex_cond2}).
\begin{theorem} \label{th:convex_cond2}
Let Assumptions \ref{assum:lip_cont}, \ref{assum:func_bnd}, \ref{assum:epsilon_g}, \ref{assum:accurate} and \ref{assum:conv} hold. Moreover, let Assumption \ref{assum:epsilon_conv} hold, i.e., 
\begin{align*}
	\epsilon^2 > \max \left\{\frac{8 \epsilon_f D^2}{\gamma c_1 \min \left\{\frac{(1-\theta)(1-2\theta - c_1(1-\theta))}{L},\frac{1-c_1}{(L + 2\kappa)(1+\kappa \alpha_{\max})^2} \right\}} , 16 \epsilon_f^2 \right\},
\end{align*}
with the same  $\gamma\in(0,1)$ as used in Assumption  \ref{assum:accurate}.
 Then, the expected number of iterations that Algorithm \ref{alg:grad_approx_ls} takes until $ \phi(X_k) - \phi^\star \leq \epsilon$ {or $\| \nabla \phi(X_k) \| \leq \frac{\zeta\epsilon_g}{\theta}$} occurs is bounded as follows
\begin{align*}
	\mathbb{E}[N_\epsilon] \leq \frac{2(1-\delta)}{(1-2\delta)^2-\gamma} \left[ M\left( \frac{1}{\epsilon} - \frac{1}{\phi(x_0) - \phi^\star} \right) + (1-\gamma)\log_\tau \left( \frac{\bar{\alpha}}{\alpha_0} \right)\right],
\end{align*}
where $M = \frac{4D^2}{ c_1 \min \left\{\frac{(1-\theta)(1-2\theta - c_1(1-\theta))}{L},\frac{1-c_1}{(L + 2\kappa)(1+\kappa \alpha_{\max})^2} \right\} } $.
\end{theorem}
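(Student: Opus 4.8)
The plan is to reuse the proof template of Theorem~\ref{th:convex} essentially verbatim, substituting the two structural ingredients (the step-size threshold and the per-iteration decrease) by their condition-\eqref{eq:alpha_cond_mod} counterparts supplied by Lemma~\ref{stepsize_threshold_lemma1_cond2}, and then invoking the master bound of Theorem~\ref{th:mainbound}. I would keep $Z_k = \frac{1}{\Delta_k^\phi} - \frac{1}{\Delta_0^\phi}$ and $Z_\epsilon = \frac{1}{\epsilon} - \frac{1}{\phi(x_0)-\phi^\star}$ exactly as in the convex case, and set
\begin{align*}
	h(\alpha) = \frac{c_1 \alpha \min\{(1-\theta)^2, (1+\kappa\alpha_{\max})^{-2}\}}{4D^2}, \qquad r(\epsilon_f) = \frac{4\epsilon_f}{\epsilon^2},
\end{align*}
with $\bar\alpha$ as in \eqref{alpha_bd1_cond2}. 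The whole theorem then reduces to verifying the four parts of Assumption~\ref{ass:alg_behave} for these choices, after which Theorem~\ref{th:mainbound} yields the bound with $M = 2/h(\bar\alpha)$.

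The first and most substantive step is to recover a lower bound $\|g_k\| \geq \sqrt{m}\,\|\nabla\phi(x_k)\|$ on \emph{true} iterations, where $m = \min\{(1-\theta)^2,(1+\kappa\alpha_{\max})^{-2}\}$; this is the one place where \eqref{eq:alpha_cond_mod} genuinely differs from \eqref{eq:theta_cond}, since its right-hand side is a maximum and I must argue in both regimes. When the active term is $\kappa\alpha_k\|g_k\|$, the triangle inequality gives $\|\nabla\phi(x_k)\| \leq (1+\kappa\alpha_k)\|g_k\|$, and the capped update $\alpha_{k+1}=\min\{\tau^{-1}\alpha_k,\alpha_{\max}\}$ forces $\alpha_k \leq \alpha_{\max}$, hence $\|g_k\| \geq (1+\kappa\alpha_{\max})^{-1}\|\nabla\phi(x_k)\|$. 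When the active term is $\zeta\epsilon_g$, I use the modified stopping time of Definition~\ref{def:n_epsilon}: for $k<N_\epsilon$ one has $\|\nabla\phi(x_k)\| > \zeta\epsilon_g/\theta$, so $\zeta\epsilon_g < \theta\|\nabla\phi(x_k)\|$ and the norm condition \eqref{eq:theta_cond} is reinstated, giving $\|g_k\| \geq (1-\theta)\|\nabla\phi(x_k)\|$. Taking the worse of the two produces the uniform constant $m$, which is exactly the constant appearing in the decrease estimate \eqref{eq.decrease_phi_cond2}.

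Given this bound, the convex progress estimates carry over with no new ideas. The analogue of Lemma~\ref{lemm:convex} uses convexity and \eqref{eq:conv_assum} to write $\Delta_k^\phi \leq D\|\nabla\phi(x_k)\|$, substitutes $\|g_k\|^2 \geq m(\Delta_k^\phi)^2/D^2$ into the true-successful decrease, and runs the identical algebra (divide through, telescope, and use $\Delta_k^\phi \geq \epsilon > 4\epsilon_f$) to obtain $z_{k+1} \geq z_k + h(\alpha_k) - r(\epsilon_f)$; the analogue of Lemma~\ref{lemm:convex_2} replaces the decrease term by $0$ to give $z_{k+1} \geq z_k - r(\epsilon_f)$ on false-successful iterations. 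Parts (i)--(iii) of Assumption~\ref{ass:alg_behave} are then immediate, and part (iv), $r(\epsilon_f)/h(\bar\alpha) < \gamma$, is precisely the rescaled neighborhood hypothesis $\epsilon^2 > 8\epsilon_f D^2/\bigl(\gamma c_1 \min\{\cdots\}\bigr)$ stated in the theorem. Feeding $Z_\epsilon$ and $h(\bar\alpha)$ into \eqref{eq:final_bound} then gives the claimed $M$.

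The delicate point, and the one I would check most carefully, is the bookkeeping of the two nested minima: $\bar\alpha = \min\{\bar\alpha_1,\bar\alpha_2\}$ from \eqref{alpha_bd1_cond2} and $m = \min\{m_1,m_2\}$ with $m_1=(1-\theta)^2$, $m_2=(1+\kappa\alpha_{\max})^{-2}$. One must identify the product $\bar\alpha\,m$ that enters $h(\bar\alpha)$ with (twice) the single minimum $\min\{\tfrac{(1-\theta)(1-2\theta-c_1(1-\theta))}{L},\,\tfrac{1-c_1}{(L+2\kappa)(1+\kappa\alpha_{\max})^2}\}$ written in the theorem. This is clean precisely when the active branch of $\bar\alpha$ and the active branch of $m$ come from the same regime—$(\bar\alpha_1,m_1)$ from the reinstated norm-condition regime, $(\bar\alpha_2,m_2)$ from the $\kappa\alpha_k\|g_k\|$ regime—in which case the product of the minima coincides with the minimum of the products; I would verify that this pairing holds (or bound any cross term) over the admissible parameter ranges. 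I would also confirm that introducing $\alpha_{\max}$, which is what keeps $m_2$ bounded away from zero, is consistent with the capped step-size recursion underlying Theorem~\ref{th:mainbound}.
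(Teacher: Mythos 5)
Your plan is exactly the route the paper intends: the paper states Theorem~\ref{th:convex_cond2} and Lemma~\ref{stepsize_threshold_lemma1_cond2} \emph{without proof}, describing them as straightforward extensions of Section~\ref{sec:conv_norm}, and your two-regime argument for $\|g_k\|\geq\sqrt{m}\,\|\nabla\phi(x_k)\|$ --- using the cap $\alpha_k\leq\alpha_{\max}$ in the $\kappa\alpha_k\|g_k\|$ branch, and the stopping-time bound $\|\nabla\phi(x_k)\|>\zeta\epsilon_g/\theta$ of Definition~\ref{def:n_epsilon} to reinstate \eqref{eq:theta_cond} in the $\zeta\epsilon_g$ branch --- is precisely the content behind \eqref{eq.decrease_phi_cond2}. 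Your side concern about the cap is harmless: with $\alpha_{k+1}=\min\{\tau^{-1}\alpha_k,\alpha_{\max}\}$, a successful step at the cap is simply a ``no-move,'' which only slackens the counting inequality in Lemma~\ref{lem:bound_on_big} in the benign direction, so Theorem~\ref{th:mainbound} is unaffected.

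The delicate point you flagged, however, does \emph{not} resolve in your favor: the pairing of the two minima can genuinely fail for admissible parameters. Take $c_1=\tfrac12$, $\theta=0.3$ (admissible since $\theta<\tfrac{1-c_1}{2-c_1}=\tfrac13$), $\kappa=L/2$, $\alpha_{\max}=1/L$: then $\bar\alpha_1\approx 0.143/L<\bar\alpha_2=0.5/L$ while $m_2=(1.5)^{-2}\approx 0.444<m_1=0.49$, so $\bar\alpha\,m=\bar\alpha_1 m_2$ is strictly smaller than $\min\{\bar\alpha_1 m_1,\bar\alpha_2 m_2\}$. Moreover, within the framework of Assumption~\ref{ass:alg_behave} and Theorem~\ref{th:mainbound} the cross term cannot be avoided: part (ii) forces $\bar\alpha=\min\{\bar\alpha_1,\bar\alpha_2\}$ because a \emph{true} iteration can sit in either branch of the max in \eqref{eq:alpha_cond_mod}, and part (i) forces $h(\alpha)\propto\alpha\min\{m_1,m_2\}$ because a $\kappa$-branch \emph{true successful} iteration with $\alpha_k$ barely above $\bar\alpha=\bar\alpha_1$ only guarantees progress proportional to $\bar\alpha_1 m_2$ (branch membership depends on $\|g_k\|$, which you do not control). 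Consequently, what your argument actually proves is the theorem with $M$ replaced by $\frac{8D^2}{c_1\bar\alpha m}$, under the correspondingly strengthened hypothesis $\epsilon^2>\frac{16\epsilon_f D^2}{\gamma c_1\bar\alpha m}$; note that the hypothesis as printed also uses the paired minimum and therefore does not by itself imply Assumption~\ref{ass:alg_behave}(iv) for your $h$. Since $\bar\alpha m \leq \min\{\bar\alpha_1m_1,\bar\alpha_2m_2\}$ always, this is a constant-factor weakening of the stated result, coinciding with it exactly when the same branch attains both minima. Given that the paper supplies no proof and its own Lemma~\ref{stepsize_threshold_lemma1_cond2} already decouples the two minima, this discrepancy is best read as an imprecision in the paper's stated constants (obtained by formally taking the minimum of the two single-regime results) rather than as a flaw in your plan --- but you should state your theorem with the unpaired product, or add the pairing as an explicit hypothesis.
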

\begin{theorem} \label{th:str_convex_cond2}
Let Assumptions \ref{assum:lip_cont}, \ref{assum:func_bnd}, \ref{assum:epsilon_g}, \ref{assum:accurate} and \ref{assum:strongly} hold. Moreover, let Assumption \ref{assum:epsilon_stronglyconv} hold, i.e., 
\begin{align*}
	\epsilon >    \max \left\{   \frac{4 \epsilon_f}{\left( 1 - 2\mu c_1\min \left\{\frac{(1-\theta)(1-2\theta - c_1(1-\theta))}{L},\frac{1-c_1}{(L + 2\kappa)(1+\kappa \alpha_{\max})^2} \right\}  \right)^{-\gamma} - 1}   ,  { \frac{\zeta^2\epsilon_g^2}{2 \mu \theta^2}} \right\}     ,
\end{align*}
with the same  $\gamma\in(0,1)$ as used in Assumption  \ref{assum:accurate}.
Then the expected number of iterations that Algorithm \ref{alg:grad_approx_ls} takes until $ \phi(X_k) - \phi^\star \leq \epsilon$ or \change{$\| \nabla \phi(X_k) \| \leq \frac{\zeta\epsilon_g}{\theta}$} occurs is bounded as follows
\begin{align*}
	\mathbb{E}[N_\epsilon] \leq  \frac{2(1-\delta)}{(1-2\delta)^2 - \gamma} \left[ 2 \log_{1/M}\left( \frac{\phi(x_0) - \phi^\star}{\epsilon}\right) + (1-\gamma)\log_\tau \left( \frac{\bar{\alpha}}{\alpha_0} \right)\right],
\end{align*}
where $M =  1 -   2\mu c_1\min \left\{\frac{(1-\theta)(1-2\theta - c_1(1-\theta))}{L},\frac{1-c_1}{(L + 2\kappa)(1+\kappa \alpha_{\max})^2} \right\}$.
\end{theorem}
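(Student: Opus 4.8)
The plan is to prove Theorem~\ref{th:str_convex_cond2} along the same route used for Theorem~\ref{th:str_convex}: verify that the stochastic process $\{\Alpha_k, Z_k\}$ satisfies Assumption~\ref{ass:alg_behave} for an appropriate triple $(\bar\alpha, h, r)$, and then invoke the master bound of Theorem~\ref{th:mainbound}. As in the strongly convex case under \eqref{eq:theta_cond}, I would set $Z_k = \log(\Delta_0^\phi/\Delta_k^\phi)$ and $Z_\epsilon = \log(\Delta_0^\phi/\epsilon)$ with $\Delta_k^\phi = \phi(X_k) - \phi^\star$, so that all the work reduces to producing the per-iteration progress function $h$ and loss function $r$ that are consistent with Definition~\ref{def:suff_acc_2} and with the modified stopping time of Definition~\ref{def:n_epsilon}.

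The single genuinely new ingredient is the replacement of \eqref{eq:boundgk}. First I would show that on every true iteration with $k < N_\epsilon$ one still has $\|g_k\| \geq \bar c\,\|\nabla\phi(x_k)\|$, where $\bar c = \min\{1-\theta,\,(1+\kappa\alpha_{\max})^{-1}\}$. This follows from a case split on which term attains the maximum in the accuracy event of Definition~\ref{def:suff_acc_2}: if $\kappa\Alpha_k\|g_k\|$ dominates, the triangle inequality gives $\|\nabla\phi(x_k)\| \le (1+\kappa\alpha_k)\|g_k\| \le (1+\kappa\alpha_{\max})\|g_k\|$; if $\zeta\epsilon_g$ dominates, then because $k < N_\epsilon$ the stopping rule of Definition~\ref{def:n_epsilon} guarantees $\|\nabla\phi(X_k)\| > \zeta\epsilon_g/\theta$, so $\|g_k - \nabla\phi(x_k)\| \le \zeta\epsilon_g < \theta\|\nabla\phi(x_k)\|$ and the old estimate $\|g_k\| \ge (1-\theta)\|\nabla\phi(x_k)\|$ applies. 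Squaring gives $\|g_k\|^2 \ge \bar c^2\|\nabla\phi(x_k)\|^2$ with $\bar c^2 = \min\{(1-\theta)^2,\,(1+\kappa\alpha_{\max})^{-2}\}$, exactly the coefficient appearing in Lemma~\ref{stepsize_threshold_lemma1_cond2}. From here the derivation transcribes Lemma~\ref{lemm:stronglyconvex}: combining \eqref{eq.decrease_phi_cond2} with the strong convexity bound $\|\nabla\phi(x_k)\|^2 \ge 2\mu\,\Delta_k^\phi$ (Assumption~\ref{assum:strongly}) yields, on a true and successful iteration, $\Delta_{k+1}^\phi \le (1 - 2\mu c_1\alpha_k \bar c^2)\Delta_k^\phi + 4\epsilon_f$, and then, using $\Delta_k^\phi > \epsilon \ge 4\epsilon_f$, the factorization $\Delta_{k+1}^\phi \le (1 - \mu c_1 \bar c^2 \alpha_k)(1 + 4\epsilon_f/\epsilon)\,\Delta_k^\phi$. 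Taking logarithms produces $h(\alpha) = -\log(1 - \mu c_1 \bar c^2 \alpha)$ and $r(\epsilon_f) = \log(1 + 4\epsilon_f/\epsilon)$, i.e.\ \eqref{eq:h_r_str_convex} with $(1-\theta)^2$ replaced by $\bar c^2$. The false-successful bound (the analogue of Lemma~\ref{lemm:stronglyconvex_2}) follows from the second inequality of Lemma~\ref{stepsize_threshold_lemma1_cond2} and gives $z_{k+1} \ge z_k - r(\epsilon_f)$, discharging parts (i)--(iii) of Assumption~\ref{ass:alg_behave}.

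To close, I would verify Assumption~\ref{ass:alg_behave}(iv). With $h(\bar\alpha) = \log(1/M)$ where $M = 1 - \mu c_1\bar c^2\bar\alpha$, the ratio condition $r(\epsilon_f)/h(\bar\alpha) < \gamma$ is equivalent to $1 + 4\epsilon_f/\epsilon < M^{-\gamma}$, i.e.\ $\epsilon > 4\epsilon_f/(M^{-\gamma}-1)$, which is precisely the first term of the $\max$ in the neighborhood hypothesis. The second term, $\epsilon > \zeta^2\epsilon_g^2/(2\mu\theta^2)$, plays a distinct role: since strong convexity gives $\phi(X_k)-\phi^\star \le \|\nabla\phi(X_k)\|^2/(2\mu)$, it guarantees that whenever the gradient-norm stopping criterion $\|\nabla\phi(X_k)\| \le \zeta\epsilon_g/\theta$ fires it already forces $\phi(X_k)-\phi^\star \le \zeta^2\epsilon_g^2/(2\mu\theta^2) < \epsilon$, so the two stopping conditions are mutually consistent and the target neighborhood is meaningful. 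Feeding $Z_\epsilon = \log(\Delta_0^\phi/\epsilon)$ and $h(\bar\alpha) = \log(1/M)$ into \eqref{eq:final_bound} and rewriting $2Z_\epsilon/h(\bar\alpha) = 2\log_{1/M}(\Delta_0^\phi/\epsilon)$ then delivers the stated bound.

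The main obstacle I anticipate is bookkeeping the constant $M$ rather than any conceptual difficulty. Both the progress coefficient $\bar c^2 = \min\{(1-\theta)^2,\,(1+\kappa\alpha_{\max})^{-2}\}$ and the threshold $\bar\alpha = \min\{2(1-2\theta-c_1(1-\theta))/(L(1-\theta)),\, 2(1-c_1)/(L+2\kappa)\}$ carry their own $\min$, and one must check that the product $\bar c^2\bar\alpha$ collapses to $2\min\{A,B\}$ with $A = (1-\theta)(1-2\theta-c_1(1-\theta))/L$ and $B = (1-c_1)/((L+2\kappa)(1+\kappa\alpha_{\max})^2)$, so that $M = 1 - 2\mu c_1\min\{A,B\}$ matches the statement; this amounts to confirming that the two minima are attained on the same branch (or otherwise tightening the bound in the unaligned regimes). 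The other point demanding care is threading the modified stopping time correctly through the true-iteration estimate above, since the $\zeta\epsilon_g$ branch of the accuracy event is usable only while $\|\nabla\phi(X_k)\| > \zeta\epsilon_g/\theta$, that is, precisely for $k < N_\epsilon$.
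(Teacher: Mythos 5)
Your proposal is correct and follows essentially the route the paper intends: the paper states Theorem~\ref{th:str_convex_cond2} \emph{without proof}, describing the derivations in Section~\ref{sec:conv_alpha} as straightforward extensions of Section~\ref{sec:conv_norm}, and your plan --- replace \eqref{eq:boundgk} via a case split on the max in Definition~\ref{def:suff_acc_2} (using the stopping time of Definition~\ref{def:n_epsilon} to make the $\zeta\epsilon_g$ branch yield $\theta$-relative accuracy), transcribe Lemma~\ref{lemm:stronglyconvex} with the coefficient $\bar{c}^2=\min\{(1-\theta)^2,(1+\kappa\alpha_{\max})^{-2}\}$ from \eqref{eq.decrease_phi_cond2}, verify Assumption~\ref{ass:alg_behave}, and invoke Theorem~\ref{th:mainbound} --- is precisely that extension. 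Your reading of the second term in the max (that $\phi(x)-\phi^\star\le\|\nabla\phi(x)\|^2/(2\mu)$ makes the gradient-norm stopping criterion imply $\epsilon$-accuracy in function value) is also consistent with the paper's remark following the theorem.

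The bookkeeping concern you flag about $M$ is genuine, and it is a defect of the paper's stated constant rather than of your argument. The derivation actually delivers $M=1-\mu c_1\bar{c}^2\bar{\alpha}$, where $\bar{c}^2$ and $\bar{\alpha}$ are \emph{independently} taken minima; this collapses to the paper's $1-2\mu c_1\min\{A,B\}$ only when both minima are attained on the same branch. They need not be: for example, with $\theta$ close to $\tfrac{1-c_1}{2-c_1}$ (so the first branch of $\bar{\alpha}$ is active) while $\kappa$ is small but $\kappa\alpha_{\max}$ is large (so the second branch of $\bar{c}^2$ is active), the cross product $\bar{c}^2\bar{\alpha}$ is strictly smaller than $2\min\{A,B\}$, the derived $M$ is strictly larger than the stated one, and both the complexity bound and the neighborhood-of-convergence requirement come out strictly weaker than claimed. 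Since the paper supplies no proof resolving this, your fallback --- prove the theorem with $M=1-\mu c_1\bar{c}^2\bar{\alpha}$, noting it coincides with the stated constant in aligned regimes --- is the defensible version of the result.
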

\change{
\begin{remark} 
In the last two theorems the bound on $\mathbb{E}[N_\epsilon] $ depends on $\epsilon$ but not on $\epsilon_g$. This bound should be understood as the bound on expected complexity to reach $\epsilon$-accuracy in terms of the function value. If $\| \nabla \phi(X_k) \| \leq \frac{\zeta\epsilon_g}{\theta}$ occurs before the  $\epsilon$-accuracy in the function value is reached, the bound clearly still holds. The next theorem derives the bound on the complexity of reaching  $\epsilon$-accuracy in terms of $\| \nabla \phi(X_k) \|$   which applies to convex and nonconvex functions, and has no direct implications on accuracy in terms of the function value. 
\end{remark}}
\begin{theorem} \label{th:nonconvex_cond2}
Let Assumptions \ref{assum:lip_cont}, \ref{assum:low_bound}, \ref{assum:func_bnd}, \ref{assum:epsilon_g} and \ref{assum:accurate}. Moreover, let Assumption \ref{assum:epsilon_nonconv} hold, i.e., 
\begin{align*}
	\epsilon^2 >  \max \left\{\frac{2 \epsilon_f}{\gamma c_1 \min \left\{\frac{(1-\theta)(1-2\theta - c_1(1-\theta))}{L},\frac{1-c_1}{(L + 2\kappa)(1+\kappa \alpha_{\max})^2} \right\}} , {\frac{\zeta^2 \epsilon_g^2 }{\theta^2}  }\right\} ,
\end{align*}
with the same  $\gamma\in(0,1)$ as used in Assumption  \ref{assum:accurate}.
Then, the expected number of iterations that Algorithm \ref{alg:grad_approx_ls} takes until $\| \nabla \phi(X_k) \| \leq \epsilon$ occurs is bounded as follows
\begin{align*}
	\mathbb{E}[N_\epsilon] \leq \frac{2(1-\delta)}{(1 - 2 \delta)^2-\gamma} \left[ \frac{M}{\epsilon^2} + (1-\gamma)\log_\tau \left( \frac{\bar{\alpha}}{\alpha_0}\right)\right],
\end{align*}
where $M = \frac{\phi(x_0) - \hat{\phi}}{c_1  \min \left\{\frac{(1-\theta)(1-2\theta - c_1(1-\theta))}{L},\frac{1-c_1}{(L + 2\kappa)(1+\kappa \alpha_{\max})^2} \right\}}$.
\end{theorem}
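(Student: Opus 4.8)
The plan is to obtain Theorem~\ref{th:nonconvex_cond2} as an instance of the master bound Theorem~\ref{th:mainbound}, exactly as Theorem~\ref{th:nonconvex} was obtained under the norm condition, but now feeding in the step-size threshold and per-iteration decrease supplied by Lemma~\ref{stepsize_threshold_lemma1_cond2} and using the augmented stopping time of Definition~\ref{def:n_epsilon}. Concretely, I would set $Z_k = \phi(X_0) - \phi(X_k)$ with upper bound $Z_\epsilon = \phi(X_0) - \hat\phi$ (finite by Assumption~\ref{assum:low_bound}), take $r(\epsilon_f) = 4\epsilon_f$, and define the progress function
\[
h(\alpha) = c_1\,\alpha\, C\, \|\nabla\phi(x_k)\|^2, \qquad C := \min\Big\{(1-\theta)^2,\ (1+\kappa\alpha_{\max})^{-2}\Big\},
\]
with $\bar\alpha$ as in \eqref{alpha_bd1_cond2}. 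Since $d_k=-g_k$ and the successful-step rule is capped at $\alpha_{\max}$, every iterate satisfies $\alpha_k\le\alpha_{\max}$, so $C$ is a uniform constant.

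The one genuinely new ingredient, relative to the norm-condition analysis, is how the stopping time controls the two branches of \eqref{eq:alpha_cond_mod}. By Definition~\ref{def:n_epsilon} (nonconvex case), for every $k<N_\epsilon$ we have $\|\nabla\phi(X_k)\| > \max\{\epsilon,\zeta\epsilon_g/\theta\}$, and Assumption~\ref{assum:epsilon_nonconv} forces $\epsilon\ge\zeta\epsilon_g/\theta$ so that this maximum equals $\epsilon$. I would use the lower bound twice. First, on a true iteration where the $\zeta\epsilon_g$ branch of \eqref{eq:alpha_cond_mod} is active, $\|g_k-\nabla\phi(x_k)\|\le\zeta\epsilon_g\le\theta\|\nabla\phi(x_k)\|$, which recovers the norm condition and hence the bound $\|g_k\|\ge(1-\theta)\|\nabla\phi(x_k)\|$; when instead the $\kappa\alpha_k\|g_k\|$ branch is active one gets $\|g_k\|\ge(1+\kappa\alpha_k)^{-1}\|\nabla\phi(x_k)\|\ge(1+\kappa\alpha_{\max})^{-1}\|\nabla\phi(x_k)\|$. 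Together these two cases are exactly what Lemma~\ref{stepsize_threshold_lemma1_cond2} packages into the constant $C$ and the decrease \eqref{eq.decrease_phi_cond2}. Second, I would use $\|\nabla\phi(X_k)\|^2 > \epsilon^2$ to convert $h(\alpha)$ into the $k$-independent lower bound $h(\alpha)\ge c_1\alpha C\epsilon^2$, so that $h(\bar\alpha)\ge c_1\bar\alpha C\epsilon^2>0$ is a valid nondecreasing lower envelope.

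With these definitions I would verify the four parts of Assumption~\ref{ass:alg_behave}: part~(i) on true and successful iterations is the first inequality of Lemma~\ref{stepsize_threshold_lemma1_cond2} rewritten as $z_{k+1}\ge z_k+h(\alpha_k)-r(\epsilon_f)$; part~(ii) is the implication that $\alpha_k\le\bar\alpha$ and true forces successful, also from Lemma~\ref{stepsize_threshold_lemma1_cond2}; part~(iii) follows on false successful iterations from the last inequality of Lemma~\ref{stepsize_threshold_lemma1_cond2} (giving $z_{k+1}\ge z_k-4\epsilon_f$) and trivially with equality on unsuccessful iterations since $x_{k+1}=x_k$; and part~(iv) is precisely Assumption~\ref{assum:epsilon_nonconv}, because $r(\epsilon_f)/h(\bar\alpha)\le 4\epsilon_f/(c_1\bar\alpha C\epsilon^2)<\gamma$ is equivalent to the stated lower bound on $\epsilon^2$. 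Feeding $Z_\epsilon=\phi(X_0)-\hat\phi$ and $h(\bar\alpha)=c_1\bar\alpha C\epsilon^2$ into \eqref{eq:final_bound} and simplifying $2Z_\epsilon/h(\bar\alpha)=M/\epsilon^2$ then yields the claimed bound.

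I expect the main obstacle to be purely the bookkeeping of the two-branch step-size threshold: one has to carry the pair $(\bar\alpha_1,(1-\theta)^2)$ coming from the norm branch and the pair $(\bar\alpha_2,(1+\kappa\alpha_{\max})^{-2})$ coming from the $\kappa\alpha_k$ branch through Lemma~\ref{stepsize_threshold_lemma1_cond2}, take the worst case over the two branches, and verify that the product $\bar\alpha\,C$ appearing in $h(\bar\alpha)$ reduces to the single $\min\{\cdot,\cdot\}$ expression displayed in $M$ and in Assumption~\ref{assum:epsilon_nonconv}; this is exactly where the per-branch pairing of threshold with decrease constant must be handled carefully rather than naively multiplying two independent minima. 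Everything else—the submartingale counting Lemmas~\ref{lem:frac_of_true}--\ref{lem:hittime2} and Theorem~\ref{th:mainbound}—is untouched, so no new stochastic-process argument is required, and the value of $\gamma$ again merely encodes the trade-off between the admissible noise level and the size of the convergence neighborhood.
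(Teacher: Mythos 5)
Your overall route is the intended one: the paper states Theorem~\ref{th:nonconvex_cond2} without proof (Section~\ref{sec:conv_alpha} explicitly says the key results are stated without proof as ``straightforward extensions''), and your plan---instantiate Assumption~\ref{ass:alg_behave} with $Z_k=\phi(X_0)-\phi(X_k)$, $r(\epsilon_f)=4\epsilon_f$, the decrease and threshold from Lemma~\ref{stepsize_threshold_lemma1_cond2}, and then invoke Theorem~\ref{th:mainbound}---is exactly that extension. Your treatment of the genuinely new ingredient is also right: for $k<N_\epsilon$ (Definition~\ref{def:n_epsilon} together with $\epsilon\geq\zeta\epsilon_g/\theta$ from the assumption) the bound $\|\nabla\phi(X_k)\|>\zeta\epsilon_g/\theta$ turns the $\zeta\epsilon_g$ branch of \eqref{eq:alpha_cond_mod} into the norm condition, the $\kappa\alpha_k\|g_k\|$ branch gives $\|g_k\|\geq(1+\kappa\alpha_{\max})^{-1}\|\nabla\phi(x_k)\|$, and $\|\nabla\phi(X_k)\|>\epsilon$ makes $h$ a deterministic nondecreasing function of $\alpha$.

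However, the step you flag as ``bookkeeping'' is a genuine gap, and it does not close in the direction you assert. Write $\bar\alpha_1=\frac{2(1-2\theta-c_1(1-\theta))}{L(1-\theta)}$, $\bar\alpha_2=\frac{2(1-c_1)}{L+2\kappa}$, $C_1=(1-\theta)^2$, $C_2=(1+\kappa\alpha_{\max})^{-2}$. In the master framework of Section~\ref{stoch_analysis} there is a \emph{single} threshold $\bar\alpha$ such that true $+$ $\alpha_k\leq\bar\alpha$ implies successful, and a \emph{single} nondecreasing $h$ valid on every true successful iteration. Since which branch of \eqref{eq:alpha_cond_mod} is active at a given iteration is not determined by $\alpha_k$, you are forced to take $\bar\alpha=\min\{\bar\alpha_1,\bar\alpha_2\}$ and $h(\alpha)=c_1\alpha\min\{C_1,C_2\}\epsilon^2$, whence
\begin{equation*}
h(\bar\alpha)=c_1\,\min\{\bar\alpha_1,\bar\alpha_2\}\,\min\{C_1,C_2\}\,\epsilon^2
\;\leq\;
c_1\,\min\{\bar\alpha_1 C_1,\;\bar\alpha_2 C_2\}\,\epsilon^2 ,
\end{equation*}
and the inequality is strict whenever different branches attain the two minima (e.g., $\theta=0.2$, $c_1=0.1$, $L=1$, $\kappa=10$, $\alpha_{\max}=0.01$ gives $\bar\alpha_2<\bar\alpha_1$ but $C_2>C_1$). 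The theorem's constant $M$ and the lower bound on $\epsilon^2$ both correspond to the \emph{paired} quantity $\min\{\bar\alpha_1C_1,\bar\alpha_2C_2\}$, so your derivation proves the complexity bound only with the larger constant $\frac{2(\phi(x_0)-\hat\phi)}{c_1\min\{\bar\alpha_1,\bar\alpha_2\}\min\{C_1,C_2\}}$, and, for the same reason, the stated Assumption~\ref{assum:epsilon_nonconv} does \emph{not} imply part (iv) of Assumption~\ref{ass:alg_behave} for your $h$ and $\bar\alpha$ (the required condition is $\gamma>\frac{4\epsilon_f}{c_1\min\{\bar\alpha_1,\bar\alpha_2\}\min\{C_1,C_2\}\epsilon^2}$, which is stronger than what the assumption gives). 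So your claim that ``careful handling'' makes $\bar\alpha\,C$ reduce to the paired min is false within this framework; to get the theorem's exact constants one would need a branch-aware counting argument that the single-threshold machinery of Section~\ref{stoch_analysis} does not provide. The correct resolution is either to prove the theorem with the unpaired (weaker, but same order) constants, or to note that the paired constants in the paper's unproved statement are themselves imprecise by a bounded factor; what you cannot do is assert the identity and move on.
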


\begin{remark} If $\delta=\theta=\kappa=\epsilon_f=\epsilon_g = 0$ our algorithm reduces to a deterministic line search algorithm with exact function evaluations and gradients. The dependence on the target accuracy $\epsilon$ is the same as that of a deterministic line search algorithm.  
\end{remark} 

\begin{remark}
Independent of the condition used on the gradient accuracy (condition \ref{eq:theta_cond} or \ref{eq:alpha_cond}), the dependence on $\epsilon$ (the target accuracy) and $\delta$ (the probability of a true iteration) is the same. Moreover, in the setting where $\theta = \kappa = \change{\epsilon_g = } 0$, the results are identical. Finally, determining which condition is stronger is not trivial as it depends on the following iteration specific quantities $\|\nabla \phi(x_k)\|$, $\|g_k\|$ and $\alpha_k$.
\end{remark} 


}

\section{Final Remarks} \label{final_remarks}

We presented the analysis of a modified line search method that can be applied to functions with bounded noise, and where the gradient approximations $\ab{g_k}$ are possibly random, e.g., Gaussian smoothed  gradients \cite{nesterov2017random,ES} or sphere smoothed gradients \cite{flaxman2005online,fazel2018global}. However, as a special case, we recover results for gradient approximations that are not random ($\delta = 0$), e.g., finite difference approximations \cite{berahas2019derivative,kelley2011implicit} or linear interpolation gradient approximations \cite{ConnScheVice08c}.

Furthermore, we discuss the effect of the parameter $\gamma$, that plays a crucial role in the analysis presented. This parameter depends on the error in the function evaluations, and effectively controls the size of the neighborhood of convergence, i.e., the lower bound on the $\epsilon$. When there is zero error in the function evaluations, i.e., $\epsilon(x) = 0$ for all $x \in \mathbb{R}^n$, $\gamma$ can be chosen arbitrarily close to zero, in which case we recover the exact convergence results from \cite{cartis2018global}.

Finally, while our analysis assumes that the step size parameter is chosen using an adaptive line search procedure (Algorithm \ref{alg:ls_sub}), and thus varies at every iteration, it also holds for a constant step size parameter choice. Namely, if $\alpha_0 \leq \bar{\alpha}$ and $\tau = 1$, then $\alpha_k \leq \bar{\alpha}$ for all $k$, and all \emph{true} iterations are also \emph{successful} iterations. Thus, as a special case of the analysis presented in Section \ref{convergence_analysis}, we recover results for a fixed step size parameter procedure. We should note that the second term in the complexity bounds is zero in the case where $\tau = 1$ and $\alpha_0 = \bar{\alpha}$.

\ab{ We establish a bound on the expected number of iteration $N_\epsilon$ that the algorithm takes until it reaches  the desired near-optimal neighborhood. This is in contrast with the analyses of many other stochastic algorithms (such as stochastic gradient), where a bound is established on the expected ``proximity'' to the optimum (e.g., the expected smallest size of the gradient) achieved sometime during a given number of iterations. However, in all these cases there are no guarantees that the algorithm will remain in the near-optimal neighborhood, once it reaches it. To analyze the behavior of a stochastic algorithm near optimality  is a nontrivial task and requires considering the nature of the function in and near such a neighborhood. For example, for nonconvex functions, where the algorithm may  converge to a near-saddle point, it will very likely leave the neighborhood and never  return to it. On the other hand, if the objective function is strongly convex in the near-optimal neighborhood, then the algorithm is very likely to either stay in this neighborhood or keep returning to it frequently. Formally analyzing this behavior is the subject of a separate study. }

\bibliographystyle{siamplain}
\bibliography{Katya}

\begin{thebibliography}{10}

\bibitem{berahas2019derivative}
{\sc A.~S. Berahas, R.~H. Byrd, and J.~Nocedal}, {\em Derivative-free
  optimization of noisy functions via quasi-newton methods}, SIAM Journal on
  Optimization, 29 (2019), pp.~965--993.

\bibitem{berahas2019theoretical}
{\sc A.~S. Berahas, L.~Cao, K.~Choromanski, and K.~Scheinberg}, {\em A
  theoretical and empirical comparison of gradient approximations in
  derivative-free optimization}, arXiv preprint arXiv:1905.01332,  (2019).

\bibitem{BlanchetCartisMenickellyScheinberg2019}
{\sc J.~Blanchet, C.~Cartis, M.~Menickelly, and K.~Scheinberg}, {\em
  Convergence rate analysis of a stochastic trust region method via
  supermartingale}, Informs Journal on Optimization, 1 (2019), pp.~92--110.

\bibitem{byrd2012sample}
{\sc R.~H. Byrd, G.~M. Chin, J.~Nocedal, and Y.~Wu}, {\em Sample size selection
  in optimization methods for machine learning}, Mathematical programming, 134
  (2012), pp.~127--155.

\bibitem{carter1991global}
{\sc R.~G. Carter}, {\em On the global convergence of trust region algorithms
  using inexact gradient information}, SIAM Journal on Numerical Analysis, 28
  (1991), pp.~251--265.

\bibitem{cartis2018global}
{\sc C.~Cartis and K.~Scheinberg}, {\em Global convergence rate analysis of
  unconstrained optimization methods based on probabilistic models},
  Mathematical Programming,  (2018), pp.~1--39.

\bibitem{ConnScheVice08c}
{\sc A.~R. Conn, K.~Scheinberg, and L.~N. Vicente}, {\em Introduction to
  Derivative-free Optimization}, MPS-SIAM Optimization series, SIAM,
  Philadelphia, USA, 2008.

\bibitem{fazel2018global}
{\sc M.~Fazel, R.~Ge, S.~M. Kakade, and M.~Mesbahi}, {\em {Global Convergence
  of Policy Gradient Methods for the Linear Quadratic Regulator}}, in
  International Conference on Machine Learning, 2018, pp.~1467--1476.

\bibitem{flaxman2005online}
{\sc A.~D. Flaxman, A.~T. Kalai, and H.~B. McMahan}, {\em Online convex
  optimization in the bandit setting: gradient descent without a gradient}, in
  Proceedings of the sixteenth annual ACM-SIAM symposium on Discrete
  algorithms, Society for Industrial and Applied Mathematics, 2005,
  pp.~385--394.

\bibitem{kelley2011implicit}
{\sc C.~T. Kelley}, {\em Implicit filtering}, vol.~23, SIAM, 2011.

\bibitem{LarsMeniWild2019}
{\sc J.~Larson, M.~Menickelly, and S.~M. Wild}, {\em Derivative-free
  optimization methods}, Acta Numerica, 28 (2019).

\bibitem{maggiar2018derivative}
{\sc A.~Maggiar, A.~W\"{a}chter, I.~S. Dolinskaya, and J.~Staum}, {\em A
  derivative-free trust-region algorithm for the optimization of functions
  smoothed via gaussian convolution using adaptive multiple importance
  sampling}, SIAM Journal on Optimization, 28 (2018), pp.~1478--1507.

\bibitem{MoreWild09}
{\sc J.~J. Mor{\'e} and S.~M. Wild}, {\em Benchmarking derivative-free
  optimization algorithms},  (2009), pp.~172--191.

\bibitem{more2011estimating}
{\sc J.~J. Mor{\'e} and S.~M. Wild}, {\em Estimating computational noise}, SIAM
  Journal on Scientific Computing, 33 (2011), pp.~1292--1314.

\bibitem{Nesterov}
{\sc Y.~Nesterov}, {\em Introductory Lectures on Convex Optimization}, Kluwer
  Academic Publsihers, Boston, MA, 2004.

\bibitem{nesterov2017random}
{\sc Y.~Nesterov and V.~Spokoiny}, {\em Random gradient-free minimization of
  convex functions}, Foundations of Computational Mathematics, 17 (2017),
  pp.~527--566.

\bibitem{NoceWrig06}
{\sc J.~Nocedal and S.~J. Wright}, {\em {Numerical Optimization}}, Springer
  Series in Operations Research, Springer, New York, NY, USA, 2nd~ed., 2006.

\bibitem{paquette2018stochastic}
{\sc C.~Paquette and K.~Scheinberg}, {\em A stochastic line search method with
  expected complexity analysis}, SIAM Journal on Optimization, 30 (2020),
  pp.~349--376.

\bibitem{pasupathy2018sampling}
{\sc R.~Pasupathy, P.~Glynn, S.~Ghosh, and F.~S. Hashemi}, {\em On sampling
  rates in simulation-based recursions}, SIAM Journal on Optimization, 28
  (2018), pp.~45--73.

\bibitem{ES}
{\sc T.~Salimans, J.~Ho, X.~Chen, S.~Sidor, and I.~Sutskever}, {\em Evolution
  strategies as a scalable alternative to reinforcement learning}, Tech. Report
  arXiv:1703.03864, 2016.

\end{thebibliography}



\end{document}